\patchcmd{\thebibliography}{\section*{\refname}}{}{}{}
\newtheorem{Theorem}{Theorem}[section]
\newtheorem{Proposition}[Theorem]{Proposition}
\newtheorem{Corollary}[Theorem]{Corollary}
\newtheorem{Lemma}[Theorem]{Lemma}
\newtheorem{Definition}[Theorem]{Definition}
\theoremstyle{remark}
\newtheorem{Remark}[Theorem]{Remark}
\newtheorem{ex}{Example}
\definecolor{foge}{rgb}{0.1, 0.6, 0.1}
\numberwithin{equation}{section}
\title{Combinatorial approach to Andrews--Gordon and Bressoud type identities}
\author{Jehanne Dousse}
\address{Université de Genève, rue du Conseil Général 7--9, 1205 Genève, Switzerland}
\email{jehanne.dousse@unige.ch}
\author{Fr\'ed\'eric Jouhet}
\address{Univ Lyon, Université Claude Bernard Lyon 1, UMR5208, Institut Camille Jordan, F-69622 Villeurbanne, France}
\email{jouhet@math.univ-lyon1.fr}
\author{Isaac Konan}
\address{Univ Lyon, Université Claude Bernard Lyon 1, UMR5208, Institut Camille Jordan, F-69622 Villeurbanne, France}
\email{konan@math.univ-lyon1.fr}
\keywords{Bijections, integer partitions, multiplicity sequences of partitions, Andrews--Gordon identities, Bressoud identities, generating functions, $q$-series}
\subjclass[2020]{05A15, 05A17, 05A19, 05A30, 11P81, 11P84}
\begin{document}

\begin{abstract}
We provide combinatorial tools inspired by work of Warnaar to give combinatorial interpretations of the sum sides of the Andrews--Gordon and Bressoud identities. More precisely, we give an explicit weight- and length-preserving bijection between sets related to integer partitions, which provides these interpretations. In passing, we discover the $q$-series version of an identity of Kur\c{s}ung\"oz, similar to the Bressoud identity but with opposite parity conditions, which we prove combinatorially using the classical Bressoud identity and our bijection. 
We also use this bijection to prove combinatorially many identities, some known and other new, of the Andrews--Gordon and Bressoud type.
\end{abstract}

\maketitle

\section{Introduction}

For a non-negative integer $n$, an integer partition of $n$ is a finite non-increasing sequence of positive integers $\lambda=(\lambda_1,\ldots,\lambda_\ell)$ whose sum is $n$; the integers $\lambda_i$ are called the parts of $\lambda$ and $\ell$ is its length.

The Rogers--Ramanujan identities \cite{RR19}, stated here in the combinatorial version due to MacMahon \cite{MacMahon} and Schur \cite{SchurRR}, are the following.
\begin{Theorem}[Rogers--Ramanujan identities, partition version]
\label{th:RRcomb}
Let $i=1$ or $2$. For all non-negative integers $n$, the number of partitions of $n$ such that the difference between two consecutive parts is at least $2$ and the part $1$ appears at most $i-1$ times is equal to the number of partitions of $n$ into parts congruent to $\pm (2+i) \mod 5.$
\end{Theorem}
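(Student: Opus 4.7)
The plan is to convert the statement into a $q$-series identity and then recognise it as the $k=2$ case of the Andrews--Gordon identities, whose combinatorial proof is the central result announced in the abstract.

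First I would compute the generating function of each side. A partition counted on the left-hand side of length $\ell$ has $\lambda_\ell \ge 3-i$ and $\lambda_j - \lambda_{j+1} \ge 2$, so the substitution $\lambda_j = 2(\ell - j + 1) - (i-1) + \mu_j$ gives a bijection with arbitrary partitions $\mu$ having at most $\ell$ parts, at a weight shift of $\ell^2 + (2-i)\ell$. The left-hand generating function is therefore
$$\sum_{\ell \ge 0} \frac{q^{\ell^2 + (2-i)\ell}}{(q;q)_\ell},$$
while the right-hand side is the transparent product
$$\prod_{n \ge 0} \frac{1}{(1-q^{5n+2+i})(1-q^{5n+3-i})}.$$
The theorem is thus equivalent to the two classical Rogers--Ramanujan sum--product identities.

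These are the $k=2$, $i \in \{1,2\}$ specialization of the Andrews--Gordon identities
$$\sum_{n_1,\ldots,n_{k-1} \ge 0} \frac{q^{N_1^2 + \cdots + N_{k-1}^2 + N_i + \cdots + N_{k-1}}}{(q;q)_{n_1} \cdots (q;q)_{n_{k-1}}} = \prod_{\substack{n \ge 1 \\ n \not\equiv 0,\, \pm i \pmod{2k+1}}} \frac{1}{1-q^n},$$
with $N_j := n_j + \cdots + n_{k-1}$. Setting $k = 2$ collapses the sum to the single series in $n_1 = \ell$ computed above (the linear terms $N_i + \cdots + N_{k-1}$ reduce to $N_1$ when $i=1$ and to the empty sum when $i=2$), and the excluded residues $\{0, \pm i\}$ modulo $5$ leave exactly the classes $\pm(2+i)$ on the product side.

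The theorem then follows by specialising the explicit length- and weight-preserving bijection that the paper builds for the sum side of Andrews--Gordon. The sole technical obstacle is constructing that bijection in full generality, which is the Warnaar-inspired combinatorial machinery developed in the body of the paper; the Rogers--Ramanujan case itself demands no further combinatorial work once the main result is available.
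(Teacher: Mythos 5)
Your generating-function reduction is correct and is exactly the classical MacMahon--Schur argument: the staircase dissection $\lambda_j = 2(\ell-j+1)-(i-1)+\mu_j$ does give
$\sum_{\ell\ge 0} q^{\ell^2+(2-i)\ell}/(q;q)_\ell$
on the sum side, and the product side is as you state, so the theorem is indeed equivalent to the two analytic Rogers--Ramanujan identities. Note, however, that the paper offers no proof of this statement at all: it is quoted as a classical result with citations to Rogers--Ramanujan, MacMahon, and Schur, so there is no internal argument to compare against.

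The genuine gap is in your last step. The bijection the paper constructs (Theorem~\ref{th:bij}, yielding Corollary~\ref{coro:GF}) proves only the \emph{sum-side interpretation}: that the multisum in~\eqref{eq:AGri} equals $\sum_n T_{i,r}(n)q^n$. It does not prove the analytic Andrews--Gordon identities~\eqref{eq:AGri} themselves; the paper obtains the sum~$=$~product equality by \emph{combining} the bijection with Gordon's Theorem~\ref{th:Gordon}, which is taken as an external input (and whose $r=2$ case is precisely Theorem~\ref{th:RRcomb}). So invoking ``the Andrews--Gordon identities via the paper's bijection'' to deduce the Rogers--Ramanujan partition theorem is circular within the paper's logical structure: you establish ``combinatorial RR $\Leftrightarrow$ analytic RR,'' and the paper's machinery establishes ``combinatorial RR $\Rightarrow$ analytic RR,'' which together yield nothing. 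To close the argument you must import an independent proof of the analytic identities $\sum_\ell q^{\ell^2+(2-i)\ell}/(q;q)_\ell = \prod(\cdots)$ --- Rogers' original proof, a Bailey-pair argument, or Andrews' recurrence proof of~\eqref{eq:AGri} --- none of which is supplied by the bijection or by your proposal. If you instead cite the analytic identities as classical, your argument becomes the standard (and correct) proof, but then the combinatorial machinery of the paper plays no role and should not be presented as the source of the conclusion.
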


These identities are central in combinatorics and number theory, see the book \cite{Sills} and references therein. Moreover they appear naturally in many other fields: the representation theory of affine Lie algebras \cite{LM,Lepowsky,Lepowsky2}, statistical mechanics \cite{Baxter}, algebraic geometry and arc spaces \cite{Mourtada}, knot theory \cite{Knot}, and others.

\medskip

In 1961, Gordon~\cite{Go} proved the following combinatorial result, which extends both Rogers--Ramanujan partition identities.

\begin{Theorem}[Gordon's identities]\label{th:Gordon}
Let $r$ and $i$ be integers such that $r\geq 2$ and $1\leq i \leq r$. Let $\mathcal{T}_{i,r}$ be the set of partitions $\lambda=(\lambda_1,\lambda_2,\dots, \lambda_\ell)$ where $\lambda_{j}-\lambda_{j+r-1} \geq 2$ for all $j$, and at most $i-1$ of the parts $\lambda_j$ are equal to $1$. Let $\mathcal{E}_{i,r}$ be the set of partitions whose parts are not congruent to $0,\pm i \mod (2r+1)$. Let $n$ be a non-negative integer, and let $T_{i,r}(n)$ (respectively $E_{i,r}(n)$) denote the number of partitions of $n$ which belong to $\mathcal{T}_{i,r}$ (respectively $\mathcal{E}_{i,r}$). Then we have 
$$T_{i,r}(n)=E_{i,r}(n).$$
\end{Theorem}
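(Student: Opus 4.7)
\medskip

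\noindent\textbf{Proof proposal.} My plan is to establish Gordon's identity by computing the generating functions of both $\mathcal{T}_{i,r}$ and $\mathcal{E}_{i,r}$ and showing they coincide via the classical Andrews--Gordon $q$-series identity.

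The generating function of $\mathcal{E}_{i,r}$ is immediate from geometric series:
\[
\sum_{n \geq 0} E_{i,r}(n)\, q^n \;=\; \prod_{\substack{n \geq 1 \\ n \not\equiv 0,\, \pm i \pmod{2r+1}}} \frac{1}{1-q^n} \;=\; \frac{(q^i,\, q^{2r+1-i},\, q^{2r+1};\, q^{2r+1})_\infty}{(q;q)_\infty},
\]
the second equality being obtained by splitting the factors of $(q;q)_\infty$ according to residues modulo $2r+1$.

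For the generating function of $\mathcal{T}_{i,r}$, I would introduce the Gordon marking of a partition $\lambda \in \mathcal{T}_{i,r}$: processing parts from smallest to largest, assign each part the smallest color in $\{1, \dots, r-1\}$ not already used by a previous part lying within distance $1$. The difference condition $\lambda_j - \lambda_{j+r-1} \geq 2$ guarantees that $r-1$ colors suffice, and within each color class the parts are distinct with consecutive gaps at least $2$. Writing $n_k$ for the number of parts of color $k$ and $N_k := n_k + n_{k+1} + \cdots + n_{r-1}$, summing over the minimal configurations (the condition on parts equal to $1$ forces colors $1, \ldots, i-1$ to begin at $1$ while colors $i, \ldots, r-1$ must start higher up) should yield the multisum
\[
\sum_{n \geq 0} T_{i,r}(n)\, q^n \;=\; \sum_{n_1, \ldots, n_{r-1} \geq 0} \frac{q^{N_1^2 + N_2^2 + \cdots + N_{r-1}^2 + N_i + N_{i+1} + \cdots + N_{r-1}}}{(q;q)_{n_1} (q;q)_{n_2} \cdots (q;q)_{n_{r-1}}},
\]
each factor $1/(q;q)_{n_k}$ accounting for the freedom to insert arbitrary non-negative increments into the $k$-th color class without violating the gap conditions.

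The proof is then completed by invoking the Andrews--Gordon $q$-series identity, which matches this multisum with the product form derived for $\mathcal{E}_{i,r}$. The main obstacle lies precisely in this last equality: the Andrews--Gordon identity is itself a deep result, typically proved analytically by iterating Watson's $q$-Whipple transformation $r-1$ times, or equivalently via the Bailey chain. A purely combinatorial derivation (the route this paper pursues, following Warnaar) is considerably more intricate and essentially demands an explicit weight- and length-preserving bijection operating on the multiplicity sequences of the partitions involved, rather than a direct manipulation of the Gordon marking.
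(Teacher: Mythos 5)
The paper offers no proof of this statement: Theorem~\ref{th:Gordon} is Gordon's 1961 result, quoted verbatim from \cite{Go} and used throughout as a known ingredient (it is what turns the paper's bijective sum-side interpretation~\eqref{eq:AGW} into product-side statements such as~\eqref{AGribis}). So there is nothing in the paper to compare your argument against line by line; what can be assessed is whether your proposal stands on its own, and as written it does not. Your computation of the generating function of $\mathcal{E}_{i,r}$ is fine, but the other two steps are genuine gaps. First, the Gordon-marking derivation of the multisum for $\mathcal{T}_{i,r}$ is only asserted (``should yield''): making it rigorous requires identifying the correct base (minimal) partition for each vector $(n_1,\ldots,n_{r-1})$ and then showing that the ``insertions'' into the $k$-th mark class are free and weight-controlled, which is exactly the forward/backward-move machinery of Kur\c{s}ung\"oz; the mark classes interact (moving a $k$-marked part can force remarking), so the claim that each class independently contributes $1/(q)_{n_k}$ is precisely the hard point, not a routine observation. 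Second, and more seriously, you close the argument by invoking the analytic Andrews--Gordon identity~\eqref{eq:AGri}, which is a theorem of at least the same depth as Gordon's; you acknowledge this, but it means your argument is a reduction of Theorem~\ref{th:Gordon} to two other nontrivial theorems rather than a proof.

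That said, the strategy itself is legitimate and historically grounded: it is essentially Andrews's 1974 route (sum side of $\mathcal{T}_{i,r}$ equals the multisum, multisum equals the product via Watson/Bailey, product equals the generating function of $\mathcal{E}_{i,r}$), with the combinatorial step done by Gordon marking instead of recurrences. Note two points of friction with the present paper. The combinatorial step you sketch is exactly what Theorem~\ref{th:bij} and Corollary~\ref{coro:GF} accomplish, but by a different device --- the bijection $\Lambda$/$\Gamma$ on multiplicity sequences --- and the authors explicitly state that they do \emph{not} use the Gordon marking, which is the tool of Kim and of He--Ji--Zhao. And the paper's logical flow is the reverse of yours: it takes Gordon's theorem as given (Gordon's own proof is purely combinatorial, via a sieve/recurrence, with no $q$-series) and deduces product-side generating functions, whereas you propose to deduce Gordon's theorem from the analytic identity. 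Neither direction is circular, but you should be explicit that your proof consumes Theorem~\ref{th:AGseries} as an external input.
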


The Rogers--Ramanujan identities correspond to the cases $r=i=2$ and $r=i+1=2$ in Theorem~\ref{th:Gordon}. Recall some standard notations for $q$-series which can be found in~\cite{GR}. Let $q$ be a fixed complex parameter with $|q|<1$.
The $q$-shifted factorial is defined for any complex
parameter $a$ by
\begin{equation*}
(a)_\infty\equiv (a;q)_\infty:=\prod_{j\geq 0}(1-aq^j)\;\;\;\;\mbox{and}\;\;\;\;(a)_k\equiv (a;q)_k:=\frac{(a;q)_\infty}{(aq^k;q)_\infty},
\end{equation*}
where $k$ is any integer.
Since the base $q$ is often the same throughout this paper,
it may be readily omitted (in notation, writing $(a)_k$ instead of $(a;q)_k$, etc.) which will not lead to any confusion. For brevity, write
\begin{equation*}
(a_1,\ldots,a_m;q)_k:=(a_1)_k\cdots(a_m)_k,
\end{equation*}
where $k$ is an integer or infinity. In~\cite{A74}, Andrews expressed Gordon's identities as $q$-series identities.

\begin{Theorem}[Andrews--Gordon identities]\label{th:AGseries}
Let $r \geq 2$ and $1 \leq i \leq r$ be two integers. We have
\begin{equation}\label{eq:AGri}
\sum_{s_1\geq\dots\geq s_{r-1}\geq0}\frac{q^{s_1^2+\dots+s_{r-1}^2+s_{i}+\dots+s_{r-1}}}{(q)_{s_1-s_2}\dots(q)_{s_{r-2}-s_{r-1}}(q)_{s_{r-1}}}=\frac{(q^{2r+1},q^{i},q^{2r-i+1};q^{2r+1})_\infty}{(q)_\infty}.
\end{equation}
\end{Theorem}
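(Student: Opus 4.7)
The plan is to reduce Theorem~\ref{th:AGseries} to Gordon's theorem (Theorem~\ref{th:Gordon}) by interpreting each side of \eqref{eq:AGri} as a generating function. Concretely, I will show that the product on the right equals $\sum_n E_{i,r}(n)\,q^n$ and the multisum on the left equals $\sum_n T_{i,r}(n)\,q^n$; the equality $T_{i,r}(n) = E_{i,r}(n)$ furnished by Gordon's theorem then yields \eqref{eq:AGri} after multiplying by $q^n$ and summing.

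The product side is immediate. By definition, a positive integer is forbidden as a part of an element of $\mathcal{E}_{i,r}$ if and only if it lies in one of the residue classes $0,\,i,\,2r+1-i$ modulo $2r+1$. Splitting the standard generating function $1/(q)_\infty$ according to residues modulo $2r+1$ and removing the three forbidden classes yields
$$\sum_{n \geq 0} E_{i,r}(n)\, q^n \;=\; \frac{(q^{2r+1},\, q^{i},\, q^{2r+1-i};\, q^{2r+1})_\infty}{(q)_\infty},$$
which is exactly the right-hand side of \eqref{eq:AGri}.

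The sum side is the substantive step and will be obtained from an explicit weight-preserving bijection between $\mathcal{T}_{i,r}$ and the set of pairs $(\mathbf{s},\boldsymbol{\mu})$, where $\mathbf{s}=(s_1,\ldots,s_{r-1})$ is a weakly decreasing tuple of non-negative integers and $\boldsymbol{\mu}=(\mu^{(1)},\ldots,\mu^{(r-1)})$ is a tuple of ordinary partitions subject to $\ell(\mu^{(k)}) \leq s_k - s_{k+1}$ for $k < r-1$ and $\ell(\mu^{(r-1)}) \leq s_{r-1}$, and such that
$$|\lambda| \;=\; \sum_{k=1}^{r-1} s_k^2 \;+\; \sum_{k=i}^{r-1} s_k \;+\; \sum_{k=1}^{r-1} |\mu^{(k)}|.$$
Once such a bijection is in hand, summing $q^{|\boldsymbol{\mu}|}$ over all admissible $\boldsymbol{\mu}$ at fixed $\mathbf{s}$ contributes exactly $1/\bigl((q)_{s_1-s_2}\cdots(q)_{s_{r-2}-s_{r-1}}(q)_{s_{r-1}}\bigr)$, and summing over $\mathbf{s}$ recovers the left-hand side of \eqref{eq:AGri}. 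To construct the bijection, I would iteratively peel off a minimal block from $\lambda$ at each stage $k$: the parameter $s_k$ measures how many parts of $\lambda$ survive $k$ rounds of peeling, the quadratic term $s_k^2$ records the area of a square piece removed at stage $k$, the linear shift $s_k$ (present only for $k \geq i$) encodes the constraint coming from the bound "at most $i-1$ parts equal to $1$", and the remaining columns at stage $k$ form the partition $\mu^{(k)}$ with its prescribed length bound.

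The main obstacle is precisely this bijective decomposition: one must verify that the threshold $i$ governs exactly for which $k$ the linear shift appears, that the length bounds on the $\mu^{(k)}$ are both forced by and sufficient for admissibility in $\mathcal{T}_{i,r}$, and that the peeling is invertible onto a partition satisfying $\lambda_j - \lambda_{j+r-1}\geq 2$. Given the authors' Warnaar-inspired framework and their parallel treatment of Bressoud-type identities, I expect the construction to be performed once in a unified setting, with the Gordon case (odd modulus $2r+1$, parity selector $i$) emerging as a specialization; the delicate point is likely the bookkeeping of the multiplicity of $1$ in the residual partition after each peeling step, since it is exactly this multiplicity that records when the linear correction $s_k$ must be activated.
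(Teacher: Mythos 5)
Your overall architecture coincides with the paper's: the right-hand side of \eqref{eq:AGri} is read off as $\sum_n E_{i,r}(n)q^n$ by splitting $1/(q)_\infty$ into residue classes modulo $2r+1$, the left-hand side is to be identified with $\sum_n T_{i,r}(n)q^n$, and Gordon's Theorem~\ref{th:Gordon} closes the argument. The product side and the reduction are fine, and the combinatorial structure you target --- tuples $(\mathbf{s},(\mu^{(1)},\dots,\mu^{(r-1)}))$ with $\ell(\mu^{(k)})\leq s_k-s_{k+1}$ and the stated weight --- is indeed equivalent, as a generating-function statement, to the paper's set $\mathcal{Q}_{i-1,r}$ and to the multisum via \eqref{AGriter}. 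The gap is that the entire substance of the proof is the weight-preserving bijection between $\mathcal{T}_{i,r}$ and this set (Corollary~\ref{coro:GF}, equation \eqref{eq:AGW}), and you do not construct or verify it: you state what such a bijection must satisfy, sketch a heuristic, and explicitly defer the ``main obstacle.'' Asserting the existence of that bijection is precisely the nontrivial content of the theorem, not a routine step, so the proof is incomplete.

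Moreover, the mechanism you sketch --- successive Durfee-square-style peeling, with $s_k$ ``the number of parts surviving $k$ rounds'' and $s_k^2$ the area of a square removed at stage $k$ --- is not the mechanism the paper (following Warnaar) uses, and it is not clear it can be made to work directly on $\mathcal{T}_{i,r}$: Andrews' Durfee dissection interprets the multisum on a different class of partitions, and no naive square-peeling of a partition satisfying $\lambda_j-\lambda_{j+r-1}\geq 2$ is known to produce these data. The paper instead works entirely on multiplicity sequences: it starts from the minimal partition $\mu(s_1,\dots,s_{r-1})$, whose frequency sequence consists of blocks $(j,0)$, and inserts the parts $\lambda_u$ one at a time by shifting frequencies (the maps $\Lambda$ and $\Gamma$ of Section~\ref{sec:Bijection}); there $s_j$ is recovered from $\nu$ as the first index $u$ with $\max_{k\geq 2u}(f_k+f_{k+1})\leq j-1$, a Gordon-marking-type statistic rather than a count of surviving parts under peeling, and the activation of the linear term $s_i+\dots+s_{r-1}$ is governed by the lower bounds $\lambda_{s_{j+1}}\geq j+\max\{j-i+1,0\}$ defining $\mathcal{Q}_{i-1,r}$, not by tracking the multiplicity of the part $1$ after each peeling. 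Until you supply an actual construction together with the admissibility and invertibility checks (the analogues of Propositions~\ref{prop:welldeflambda_s1}, \ref{prop:image}, \ref{prop:inducedlambda}, and~\ref{prop:inducedgamma}), the identification of the multisum with $\sum_n T_{i,r}(n)q^n$, and hence \eqref{eq:AGri}, is not established.
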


Just like the Rogers--Ramanujan identities, the Andrews--Gordon identities also arise in several fields, such as representation theory \cite{DHK,MP1,MP2,W2} or commutative algebra \cite{Pooneh,ADJM}, to name only a few.

One immediately sees that the generating function of the set $\mathcal{E}_{i,r}$ in Theorem~\ref{th:Gordon} is given by the product on the right-hand side of~\eqref{eq:AGri}. However, showing that the left-hand side is the generating function of $\mathcal{T}_{i,r}$ is not that simple. Originally, it was proved by Andrews \cite{A74} using recurrences. The first and only bijective proof was given by Warnaar~\cite{W} in a more general context.

\medskip

In~\cite{B}, Bressoud proved the following result, which is considered to be the even moduli counterpart of Gordon's identities.

\begin{Theorem}[Bressoud's identities]\label{th:Bressoud}
Let $r$ and $i$ be integers such that $r\geq 2$ and $1\leq i < r$. Let $\mathcal{U}_{i,r}$ be the set of partitions $\lambda=(\lambda_1,\lambda_2,\dots, \lambda_\ell)$ where $\lambda_{j}-\lambda_{j+r-1} \geq 2$ for all $j$, $\lambda_{j}-\lambda_{j+r-2} \leq 1$ only if $\lambda_{j}+\lambda_{j+1}+\cdots+\lambda_{j+r-2} \equiv i-1 \mod 2$, and at most $i-1$ of the parts $\lambda_j$ are equal to $1$. Let $\mathcal{F}_{i,r}$ be the set of partitions whose parts are not congruent to $0,\pm i \mod (2r)$. Let $n$ be a non-negative integer, and let $U_{i,r}(n)$ (respectively $F_{i,r}(n)$) denote the number of partitions of $n$ which belong to $\mathcal{U}_{i,r}$ (respectively $\mathcal{F}_{i,r}$). Then we have 
$$U_{i,r}(n)=F_{i,r}(n).$$
\end{Theorem}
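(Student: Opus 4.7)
The plan is to prove this by showing the generating functions of $\mathcal{U}_{i,r}$ and $\mathcal{F}_{i,r}$ coincide, and to do so combinatorially in the spirit of Warnaar's bijective proof of Theorem~\ref{th:AGseries}. The product side is immediate: since $\mathcal{F}_{i,r}$ consists of partitions avoiding residues $0, \pm i$ modulo $2r$, standard manipulations give
\[
\sum_{n \geq 0} F_{i,r}(n)\, q^n = \frac{(q^{2r},\,q^i,\,q^{2r-i};\,q^{2r})_\infty}{(q)_\infty},
\]
so the task reduces to identifying this same product with $\sum_n U_{i,r}(n)\,q^n$.

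First, I would encode any $\lambda \in \mathcal{U}_{i,r}$ by its multiplicity sequence $(m_k)_{k \geq 1}$, where $m_k = \#\{j : \lambda_j = k\}$. The gap condition $\lambda_j - \lambda_{j+r-1} \geq 2$ becomes $m_k + m_{k+1} \leq r - 1$ for every $k$, and $m_1 \leq i - 1$ encodes the cap on parts equal to $1$. The Bressoud-specific parity rule — that $\lambda_j - \lambda_{j+r-2} \leq 1$ is allowed only when $\lambda_j + \cdots + \lambda_{j+r-2} \equiv i - 1 \pmod 2$ — translates into the following constraint on the multiplicities: whenever the inequality $m_k + m_{k+1} \leq r - 1$ is saturated, the quantity $(r-1)k + m_{k+1}$ must have the same parity as $i - 1$.

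Next, I would construct an explicit bijection by iteratively peeling off staircase-like blocks from $\lambda$: at each step, locate an index $K$ at which saturation $m_K + m_{K+1} = r - 1$ occurs, strip the $r - 1$ parts fitting into $\{K, K+1\}$, and record the weight $(r-1)K + m_{K+1}$ as a part of a new "tracked" partition $\mu$. The parity condition above forces these tracked parts into a prescribed parity class, while the residual of $\lambda$ (with smaller multiplicities) continues to satisfy the Bressoud constraints. After all saturated layers are peeled, the leftover multiplicities can be repackaged, using a Jacobi triple product expansion of the above infinite product, as a partition in $\mathcal{F}_{i,r}$.

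The main obstacle will be threading the parity constraint through the peeling procedure: unlike the Andrews--Gordon setting, where saturation of $m_k + m_{k+1} = r - 1$ is unrestricted and the result is a modulus $2r + 1$, here every stripped block has a forced parity of length or offset, and this is precisely what replaces $2r+1$ by $2r$ and eliminates the residues $\pm i \pmod{2r}$ on the product side. I expect the technical work to lie in checking that the parity condition is preserved at each stage and that the map is invertible, reducing the identity $U_{i,r}(n) = F_{i,r}(n)$ to a verification that the peeling/repackaging bijection is weight- and length-preserving.
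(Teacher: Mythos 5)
First, note that the paper does not prove this theorem: it is quoted from Bressoud \cite{B}, where it was established by combining a recurrence argument for the sum side with the analytic identity \eqref{eq:Bri}. The present paper only supplies a combinatorial proof of the sum-side interpretation \eqref{eq:BriW}; the theorem itself remains an external input (used, e.g., to derive \eqref{Bribis}). So your proposal is an attempt at a from-scratch proof, and it has to be judged on its own.

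The first two steps of your sketch are sound: the product form of $\sum_n F_{i,r}(n)q^n$ is standard, and your translation of the difference and parity conditions into multiplicity language is correct (it agrees with \eqref{eq:cdu}, since at saturation $km_k+(k+1)m_{k+1}=k(r-1)+m_{k+1}$). Your peeling procedure is essentially the map $\Gamma$ of Section~\ref{sec:Gamma} (or the Gordon marking), and, carried out carefully, it proves exactly \eqref{eq:BriW}: the generating function of $\mathcal{U}_{i,r}$ equals the multisum on the left-hand side of \eqref{eq:Bri}. The genuine gap is your final step, where you assert that the leftover data ``can be repackaged, using a Jacobi triple product expansion of the above infinite product, as a partition in $\mathcal{F}_{i,r}$.'' This is precisely the hard half of the theorem and it is not a repackaging: the Jacobi triple product writes the numerator $(q^{2r},q^{i},q^{2r-i};q^{2r})_\infty$ as an \emph{alternating} sum $\sum_{n\in\Z}(-1)^n q^{rn^2+\cdots}$, so passing from the manifestly positive multisum to the product requires either an analytic argument (Bailey pairs, as in \cite{B} or \cite{DJK}) or a sign-reversing involution of Rogers--Ramanujan type, neither of which your sketch provides. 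No simple bijection between $\mathcal{U}_{i,r}$ and $\mathcal{F}_{i,r}$ of the kind you describe is known, and the parity bookkeeping you flag as ``the main obstacle'' is not where the difficulty lies --- it is the multisum-to-product step that your proposal leaves entirely open. As written, the argument proves only $\sum_n U_{i,r}(n)q^n = {}$ the multisum, and silently assumes \eqref{eq:Bri}, which is equivalent to the statement being proved.
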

The $q$-series counterpart of Theorem~\ref{th:Bressoud}, also proved in \cite{B}, which is true for $1\leq i \leq r$, is 
\begin{equation}\label{eq:Bri}
\sum_{s_1\geq\dots\geq s_{r-1}\geq0}\frac{q^{s_1^2+\dots+s_{r-1}^2+s_{i}+\dots+s_{r-1}}}{(q)_{s_1-s_2}\dots(q)_{s_{r-2}-s_{r-1}}(q^2;q^2)_{s_{r-1}}}=\frac{(q^{2r},q^{i},q^{2r-i};q^{2r})_\infty}{(q)_\infty}.
\end{equation}
Again, the right-hand side of~\eqref{eq:Bri} is clearly the generating function of the set $\mathcal{F}_{i,r}$.
We extend the definition of $F_{i,r}(n)$ to $i=r$ by setting $F_{r,r}(n)$ to be the coefficient of $q^n$ in the right-hand side of \eqref{eq:Bri}. On the other hand, $U_{r,r}(n)$ is the number of partitions in $\mathcal{U}_{r,r}$, where $\mathcal{U}_{r,r}$ is defined as in Theorem~\ref{th:Bressoud}.

Similarly to the Andrews--Gordon case, one can wonder whether there is a bijective proof that the left-hand side of~\eqref{eq:Bri} is the generating function of the set $\mathcal{U}_{i,r}$. As for the Andrews--Gordon identities, it was originally proved via recurrences \cite{B}.
One of the goals of this paper is to provide such a bijective proof.
To do so, we use Warnaar's point of view in~\cite{W}, which describes partitions by their multiplicity sequences. Actually, our main result is a general bijection between two sets related to partitions, from which we derive many corollaries, among which the desired bijective proof.

\medskip

To prove our main result, we extend the definition of integer partitions to allow parts equal to $0$.
\textit{Thus, in the remainder of the paper, a partition denotes a finite non-increasing sequence of non-negative integers}. For such partitions, we consider the multiplicity (or frequency) sequence $(f_u)_{u\geq 0}$, where $f_u$ is the number of occurrences of the part $u$ in the partition. Then a partition $\lambda$ can be described equivalently as the finite sequence of non-negative integers $(\lambda_1,\ldots,\lambda_\ell)$ of its parts, or as the infinite sequence of non-negative integers $(f_u)_{u\geq 0}$ of its multiplicities (where there are finitely many positive terms).
For examples, in terms of frequencies, the partition $(4,4,3,1,0)$ would be written as $(1,1,0,1,2,0,\dots)$.
Moreover, we associate with a partition $\lambda$ the classical weight statistic
$$|\lambda|=\lambda_1+\lambda_2+\cdots+\lambda_\ell = \sum_{u\geq 0} uf_u.$$

For an integer $r\geq 2$, we define the following set of partitions:
\begin{equation}\label{defAr}
\mathcal{A}_{r}:=\{(f_u)_{u\geq 0}\,|\,f_0\leq r-1\; \mbox{and}\; f_u+f_{u+1}\leq r-1\;\mbox{for all}\; u\}.
\end{equation} 

Let $s_1\geq \cdots \geq s_{r-1}\geq 0$ be integers, and set $s_0=\infty$ and $s_r= 0$. 
\begin{Definition}\label{def:mu}
Denote by $\mu(s_1,\ldots,s_{r-1})=(f_u)_{u\geq 0}$ the partition such that for all $j\in \{0,\ldots,r-1\}$, 
\begin{equation}\label{eq:defmu}
(f_{2u},f_{2u+1}) = (j,0)\text{ for all } s_{j+1}\leq u < s_j.
\end{equation}
\end{Definition}

Note that $\mu(s_1,\ldots,s_{r-1})\in\mathcal{A}_{r}$, and that its multiplicity sequence $(f_u)_{u\geq 0}$ has the form 
$$(\underbrace{r-1,0, \ldots, r-1, 0}_{s_{r-1}\text{ pairs}},\ldots,\underbrace{j,0, \ldots, j,0}_{s_{j}-s_{j+1}\text{ pairs}},\ldots,\underbrace{1,0, \ldots, 1,0}_{s_{1}-s_{2}\text{ pairs}}, 0 , \ldots ).$$

 \begin{Definition}\label{def:P} 
Denote by $\mathcal{P}(s_1,\ldots,s_{r-1})$ the set of sequences $\lambda=(\lambda_0,\ldots,\lambda_{s_1-1})$ of non-negative integers such that for all $j\in \{1,\ldots,r-1\}$, the sequence $\lambda^{(j)}:=(\lambda_{s_j-1},\ldots,\lambda_{s_{j+1}})$ is a partition.
\end{Definition}

Finally let 
\begin{equation*}\label{defPr}
\mathcal{P}_r:= \bigsqcup_{s_1\geq \cdots \geq s_{r-1}\geq 0} \{\mu(s_1,\ldots,s_{r-1})\}\times \mathcal{P}(s_1,\ldots,s_{r-1}).
\end{equation*}
The weight of an element $(\mu(s_1,\ldots,s_{r-1}),\lambda)$ of $\mathcal{P}_r$ is defined to be $|\mu(s_1,\ldots,s_{r-1})|+|\lambda^{(1)}|+\dots+|\lambda^{(r-1)}|$. Its length is defined to be the length of $\mu(s_1,\ldots,s_{r-1})$, i.e. $s_1+\dots+s_{r-1}$.

Now we are ready to state the main result of this paper.

\begin{Theorem}[Bijection]\label{th:bij}
For all $r\geq 2$, there is an explicit weight- and length-preserving bijection between the sets $\mathcal{P}_r$ and $\mathcal{A}_r$.
\end{Theorem}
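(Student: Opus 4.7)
The plan is to build an explicit iterative algorithm for $\Phi: \mathcal{P}_r \to \mathcal{A}_r$ and its inverse $\Psi$, in the spirit of Warnaar's work. I view $\mu(s_1,\ldots,s_{r-1})$ as the canonical element of $\mathcal{A}_r$ with parameters $(s_1,\ldots,s_{r-1})$: its multiplicity sequence is a stack of $r-1$ nested ``staircase blocks'', the $j$-th one occupying the pair-positions $u\in[s_{j+1},s_j)$ with $j$ copies of $2u$. The sequence $\lambda$ then records a series of ``shifts'' that deform this canonical configuration into an arbitrary element of $\mathcal{A}_r$ of the same length.

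For the forward map, I would process the pair-positions $u=0,\ldots,s_1-1$ one at a time and, for each, apply $\lambda_u$ weight-one elementary moves associated with that pair-position. An elementary move is either $f_{2u}\to f_{2u}-1,\ f_{2u+1}\to f_{2u+1}+1$ or $f_{2u+1}\to f_{2u+1}-1,\ f_{2u+2}\to f_{2u+2}+1$; each such move increases the weight by $1$, preserves the length, and can be arranged to keep the multiplicity sequence in $\mathcal{A}_r$. The partition condition on each block $\lambda^{(j)}$, namely $\lambda_{s_{j+1}}\leq\cdots\leq\lambda_{s_j-1}$, plays a central role: the monotone growth of shifts within a block ensures that the copies originating at successive pair-positions remain suitably separated and no $f_v+f_{v+1}$ exceeds $r-1$.

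For the inverse $\Psi$, given $\nu\in\mathcal{A}_r$ with multiplicity sequence $(f_u)$, I set
\[
s_j:=\bigl|\{u\geq 0:\, f_{2u}+f_{2u+1}\geq j\}\bigr|\qquad(j=1,\ldots,r-1).
\]
The Gordon bound $f_{2u}+f_{2u+1}\leq r-1$ yields $\sum_{j=1}^{r-1}s_j=\sum_u(f_{2u}+f_{2u+1})=\ell(\nu)$, so length is preserved automatically, and $s_1\geq s_2\geq\cdots\geq s_{r-1}\geq 0$ is immediate from the nested inclusions. This fixes $\mu(s_1,\ldots,s_{r-1})$, and the shifts $\lambda_u$ are recovered by undoing the elementary moves in reverse order, recording how many were associated with each pair-position. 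Weight preservation is built into the construction, since every elementary move contributes exactly one to the weight.

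The main obstacle will be interactions across block boundaries. Within a single block the non-decreasing condition on $\lambda^{(j)}$ guarantees that the elementary moves can be performed without violating the Gordon bound; but at the boundary between the $(j+1)$-block and the $j$-block (pair-positions $s_{j+1}-1$ and $s_{j+1}$) the shifts are not required to be monotone, and a large shift at pair-position $s_{j+1}-1$ can force copies into the adjacent pair-position and cause a naive placement to violate Gordon. I expect to address this either by a careful case analysis (specifying how shifts are rerouted when they cross a block boundary), or by induction on $r$: the base case $r=2$ is the classical Rogers--Ramanujan bijection, in which $\mu$ is the staircase $(2s_1-2,\ldots,2,0)$ shifted part by part by $\lambda^{(1)}$, and the inductive step peels off the top block (encoded by $\lambda^{(r-1)}$) to reduce the remaining data to an element of $\mathcal{P}_{r-1}$. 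Verifying that the peeling inverts the top-block insertion is the combinatorial heart of the argument.
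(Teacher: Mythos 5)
Your overall strategy (deform the minimal configuration $\mu(s_1,\ldots,s_{r-1})$ by inserting the $\lambda_u$, invert by extraction) is the same as the paper's, but the proposal has a concrete error and leaves the decisive step unproved. The error is your formula for the inverse parameters, $s_j:=|\{u\geq 0:\,f_{2u}+f_{2u+1}\geq j\}|$. Take $r=3$ and $\nu=(2,1)$, with multiplicity sequence $(0,1,1,0,0,\ldots)\in\mathcal{A}_3$. Your formula gives $(s_1,s_2)=(2,0)$, since $f_0+f_1=f_2+f_3=1$; but $(3,0)=(1,0,0,1,0,\ldots)$ also gives $(s_1,s_2)=(2,0)$, and $\mathcal{P}(2,0)$ contains exactly one element of weight $1$ (namely $(\lambda_0,\lambda_1)=(0,1)$, forced by $\lambda_1\geq\lambda_0$ and $\lambda_0+\lambda_1=1$), whereas $\mu(2,0)$ has weight $2$ and both $\nu=(2,1)$ and $(3,0)$ have weight $3$ and length $2$. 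So two targets compete for one preimage and no bijection with your parameter assignment can exist. The defect is that the maximal consecutive sum $f_1+f_2=2$ straddles an odd--even boundary, which a scan of the pairs $(f_{2u},f_{2u+1})$ of the original sequence cannot see. The paper's $\Gamma$ instead takes $s_j=\min\{u:h_u\leq j-1\}$ where $h_u=\max\{\eta^{(u)}_v+\eta^{(u)}_{v+1}:v\geq 2u\}$ is a running maximum over \emph{all} consecutive pairs of the \emph{successively modified} sequences $\eta^{(u)}$; on the example above this correctly yields $(s_1,s_2)=(1,1)$ and $\kappa=(3)$.

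Beyond that, the forward map is only sketched: ``can be arranged to keep the multiplicity sequence in $\mathcal{A}_r$'' and the treatment of block boundaries are exactly the combinatorial heart of the theorem, and you explicitly defer them. The paper resolves this not by weight-one elementary moves (Warnaar's route, which requires $\lambda_0+\cdots+\lambda_{s_1-1}$ steps and its own bookkeeping) but by inserting each $\lambda_u$ in a single step: the landing position $n_u$ is defined as the least $t$ with $\sum_{j=2u+2}^{t}\bigl[g_u-(\theta^{(u+1)}_j+\theta^{(u+1)}_{j-1})\bigr]\geq\lambda_u$, the intervening multiplicities are shifted two places left, and a new pair summing to $g_u$ is deposited at $(n_u-2,n_u-1)$; the Gordon bound and invertibility are then checked directly (Propositions~\ref{prop:welldeflambda_s1}, \ref{prop:inverselambda}, \ref{prop:inversegamma}). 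The monotonicity of $\lambda^{(j)}$ enters precisely through Lemma~\ref{lem:memegu} ($g_u=g_{u+1}$ forces $n_u\leq n_{u+1}-2$), which is what keeps successive insertions from colliding. To repair your proposal you would need both a correct extraction rule of this running-maximum type and an explicit, provably invertible insertion rule; as written, neither is in place.
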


The precise description of this bijection is provided in Section~\ref{sec:Bijection}. The first consequence of this result is a simplification of Warnaar's proof \cite{W} of the connection between Theorem~\ref{th:Gordon} and~\eqref{eq:AGri}. It also yields bijectively that the left-hand side of~\eqref{eq:Bri} is indeed the generating function of the set $\mathcal{U}_{i,r}$ from Bressoud's Theorem~\ref{th:Bressoud}.

\begin{Corollary}[Sum sides of the Andrews--Gordon and Bressoud identities]\label{coro:GF}
For $r$ and $i$ integers such that $r\geq 2$ and $1\leq i \leq r$, we have  the following generating functions:
\begin{equation}\label{eq:AGW}
\sum_{n\geq 0} T_{i,r}(n)q^n=\sum_{s_1\geq\dots\geq s_{r-1}\geq0}\frac{q^{s_1^2+\dots+s_{r-1}^2+s_{i}+\dots+s_{r-1}}}{(q)_{s_1-s_2}\dots(q)_{s_{r-2}-s_{r-1}}(q)_{s_{r-1}}},
\end{equation}
and
\begin{equation}\label{eq:BriW}
\sum_{n\geq 0} U_{i,r}(n)q^n=\sum_{s_1\geq\dots\geq s_{r-1}\geq0}\frac{q^{s_1^2+\dots+s_{r-1}^2+s_{i}+\dots+s_{r-1}}}{(q)_{s_1-s_2}\dots(q)_{s_{r-2}-s_{r-1}}(q^2;q^2)_{s_{r-1}}}.
\end{equation}
\end{Corollary}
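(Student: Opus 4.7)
The plan is to deduce both \eqref{eq:AGW} and \eqref{eq:BriW} from Theorem~\ref{th:bij}. My strategy has three stages: first, reformulate the generating function of $\mathcal{T}_{i,r}$ (resp.\ $\mathcal{U}_{i,r}$) as a weighted sum over a subset of $\mathcal{A}_r$; second, apply Theorem~\ref{th:bij} to transport the sum to $\mathcal{P}_r$; third, evaluate.

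For the reformulation, I use the shift $\lambda' \mapsto \nu := \lambda' - 1$, i.e.\ subtract $1$ from every part. This is a length-preserving bijection from $\mathcal{T}_{i,r}$ onto $\{\nu \in \mathcal{A}_r : f_0(\nu) \le i-1\}$: since the parts of $\lambda'$ are at least $1$, $\nu$ has non-negative parts, the difference condition is preserved, and the condition ``at most $i-1$ parts equal to $1$'' becomes ``at most $i-1$ parts equal to $0$''; moreover, the difference condition $\lambda'_j - \lambda'_{j+r-1} \ge 2$ is equivalent to $f_u + f_{u+1} \le r-1$. Since $|\nu| = |\lambda'| - \ell(\lambda')$, this yields
\[
\sum_{n \ge 0} T_{i,r}(n)\,q^n \;=\; \sum_{\substack{\nu \in \mathcal{A}_r \\ f_0(\nu) \le i-1}} q^{|\nu| + \ell(\nu)}.
\]
The analogous statement for $U_{i,r}$ picks out a further subset cut out by the (shifted) Bressoud parity condition on tight blocks.

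For the second stage, a short Abel summation using the conventions $s_0 = \infty$ and $s_r = 0$ gives
\[
|\mu(s_1,\ldots,s_{r-1})| = \sum_{j=1}^{r-1} s_j^2 - \sum_{j=1}^{r-1} s_j \quad \text{and} \quad \ell(\mu) = \sum_{j=1}^{r-1} s_j,
\]
so $|\mu| + \ell(\mu) = \sum_j s_j^2$. Combined with the generating function $1/(q)_{s_j - s_{j+1}}$ for each unconstrained block $\lambda^{(j)}$, the bijection of Theorem~\ref{th:bij} already recovers the case $i = r$ of \eqref{eq:AGW}. The crucial new input for general $i$ is to show that, under the bijection, the condition $f_0(\nu) \le i-1$ is equivalent to requiring $\lambda^{(j)}$ to have all parts $\ge (j-i+1)^+$ for each $j$: no constraint for $j < i$, and a shift by $j-i+1$ for $j \ge i$. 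Granting this, the contribution of $\lambda^{(j)}$ is $q^{(j-i+1)^+ (s_j - s_{j+1})}/(q)_{s_j - s_{j+1}}$, and another Abel summation
\[
\sum_{j=i}^{r-1}(j-i+1)(s_j - s_{j+1}) = \sum_{j=i}^{r-1} s_j
\]
supplies the missing numerator $q^{\sum s_j^2 + \sum_{j \ge i} s_j}$, completing \eqref{eq:AGW}. The Bressoud identity \eqref{eq:BriW} follows from the same computation: the only extra input is that the parity condition on tight blocks of $\nu$ forces $\lambda^{(r-1)}$ to have all parts of a single parity, which replaces $1/(q)_{s_{r-1}}$ by $1/(q^2;q^2)_{s_{r-1}}$ (the generating function of partitions with at most $s_{r-1}$ parts of fixed parity and minimum $r-i$).

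The main obstacle is establishing, from the explicit description of the bijection given in Section~\ref{sec:Bijection}, the two transfer claims: (i) that $f_0(\nu) \le i-1$ is equivalent to the shift conditions on the $\lambda^{(j)}$'s, and (ii) that the Bressoud parity condition corresponds precisely to the parity restriction on $\lambda^{(r-1)}$. Both reduce to understanding how the perturbation encoded by $\lambda$ moves the lowest ``level-$(r-1)$'' pairs of $\mu$ and controls both the number of zero parts and the parity of the positions of the tight blocks in the image $\nu$.
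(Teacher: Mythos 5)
Your proposal is correct in outline, but it routes the argument differently from the paper, so let me compare. The paper never shifts parts: it views $\mathcal{T}_{i,r}$ and $\mathcal{U}_{i,r}$ directly as the subsets of $\mathcal{A}_r$ with $f_0=0$ and $f_1\leq i-1$ (Proposition~\ref{prop:TUfreq}), proves that the bijection of Theorem~\ref{th:bij} restricts to bijections $\mathcal{Q}_{i-1,r}\leftrightarrow\mathcal{T}_{i,r}$ and $\mathcal{S}_{i-1,r}\leftrightarrow\mathcal{U}_{i,r}$ (Corollaries~\ref{coro:inducelambda}~(2),(5) and~\ref{coro:inducegamma}~(2),(5)), and reads off the multisums from \eqref{AGriter} and \eqref{Briter}; only weight preservation is invoked. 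You instead subtract $1$ from every part, which identifies $\mathcal{T}_{i,r}$ with $\mathcal{A}_{i-1,r}$ and $\mathcal{U}_{i,r}$ with $\mathcal{B}_{i-1,r}$ at the cost of replacing the weight by $|\nu|+\ell(\nu)$, and then you need the restrictions $\mathcal{P}_{i-1,r}\leftrightarrow\mathcal{A}_{i-1,r}$ and $\mathcal{R}_{i-1,r}\leftrightarrow\mathcal{B}_{i-1,r}$; this genuinely exercises the length preservation of Theorem~\ref{th:bij}, and your bookkeeping ($|\mu|+\ell(\mu)=\sum_j s_j^2$, the Abel summation producing $s_i+\cdots+s_{r-1}$, the parity restriction on $\lambda^{(r-1)}$ producing $(q^2;q^2)_{s_{r-1}}$, and the translation of the Bressoud parity condition into $\equiv r-i \bmod 2$ after the shift) all checks out against the computations in Proposition~\ref{prop:multisum}. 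What your route buys is that the subsets of $\mathcal{P}_r$ you need are the simpler ones, cut out by $\lambda_{s_{j+1}}\geq j-i+1$, rather than the paper's $\mathcal{Q}$ and $\mathcal{S}$ with lower bounds $j+\max\{j-i+1,0\}$; the price is carrying the extra statistic $\ell$. The one caveat: your two ``transfer claims'' are not a residual technicality but precisely the induced-bijection statements (Corollaries~\ref{coro:inducelambda}~(1),(3) and~\ref{coro:inducegamma}~(1),(3)), whose proofs---tracking how $\theta^{(u)}_{2u}$ and the parity of the tight pairs evolve at each insertion/extraction step---constitute the bulk of Section~\ref{sec:cor}. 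So your proposal is a valid alternative derivation of the corollary, but it relocates rather than bypasses the hard combinatorial work.
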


It is natural to look for an identity similar to Bressoud's but with opposite parity conditions. This was done by Kur\c{s}ung\"oz in \cite{Kur} and then arised again as so-called ``ghost series" in \cite{KLRS}. However, while Kur\c{s}ung\"oz had an expression for the generating function as a sum of products (\eqref{eq:fij} below), our expression  as a multisum is new.

\begin{Corollary}[Kur\c{s}ung\"oz identities, new multisum]\label{coro:newbressoud}
Let $r$ and $i$ be integers such that $r\geq 2$ and $1\leq i \leq r$. Let $\tilde{\mathcal{U}}_{i,r}$ be the set of partitions $\lambda=(\lambda_1,\lambda_2,\dots, \lambda_\ell)$ where $\lambda_{j}-\lambda_{j+r-1} \geq 2$ for all $j$, $\lambda_{j}-\lambda_{j+r-2} \leq 1$ only if $\lambda_{j}+\lambda_{j+1}+\cdots+\lambda_{j+r-2} \equiv i \mod 2$, and at most $i-1$ of the parts $\lambda_j$ are equal to $1$. For any non-negative integer $n$, let $\tilde{U}_{i,r}(n)$ denote the number of partitions of $n$ which belong to $\tilde{\mathcal{U}}_{i,r}$. Then, by setting $F_{r+1,r}(n)=F_{r-1,r}(n)$ and $F_{0,r}(n)=0$, we have 
$$\tilde{U}_{i,r}(n)+\tilde{U}_{i,r}(n-1)=F_{i+1,r}(n)+F_{i-1,r}(n-1).$$
Moreover
\begin{multline}\label{eq:fij}
(1+q)\sum_{s_1\geq\dots\geq s_{r-1}\geq0}\frac{q^{s_1^2+\dots+s_{r-1}^2+s_{i}+\cdots+ s_{r-2}+2s_{r-1}}}{(q)_{s_1-s_2}\dots(q)_{s_{r-2}-s_{r-1}}(q^2;q^2)_{s_{r-1}}}\\
=\frac{1}{(q)_\infty}\left((q^{2r},q^{i+1},q^{2r-i-1};q^{2r})_\infty+q(q^{2r},q^{i-1},q^{2r-i+1};q^{2r})_\infty\right),
\end{multline}
and 
\begin{equation}\label{eq:fijW}
\sum_{n\geq 0} \tilde{U}_{i,r}(n)q^n=\sum_{s_1\geq\dots\geq s_{r-1}\geq0}\frac{q^{s_1^2+\dots+s_{r-1}^2+s_{i}+\cdots+ s_{r-2}+2s_{r-1}}}{(q)_{s_1-s_2}\dots(q)_{s_{r-2}-s_{r-1}}(q^2;q^2)_{s_{r-1}}}.
\end{equation}
\end{Corollary}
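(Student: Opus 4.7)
The plan is to use the main bijection of Theorem~\ref{th:bij} once, in order to establish the multisum formula \eqref{eq:fijW}, and then to deduce the combinatorial identity and \eqref{eq:fij} from it together with the classical Bressoud identity.

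\textbf{Step 1 (proof of \eqref{eq:fijW}).} I would specialize Theorem~\ref{th:bij} to the pairs $(\mu(s_1,\dots,s_{r-1}),\lambda)\in\mathcal{P}_r$ in which $\lambda^{(r-1)}$ has only odd parts and the remaining data encodes the condition $f_1\le i-1$. The generating function of such pairs is precisely the left-hand side of \eqref{eq:fijW}: the partitions of $\lambda^{(r-1)}$ into at most $s_{r-1}$ odd parts contribute $q^{s_{r-1}}/(q^2;q^2)_{s_{r-1}}$, which accounts for the extra $q^{s_{r-1}}$ in the exponent relative to Bressoud's \eqref{eq:BriW}. Recall that, in the Bressoud case, restricting $\lambda^{(r-1)}$ to have only even parts should give a bijection with $\mathcal{U}_{i,r}$, the evenness corresponding to the parity condition $\equiv i-1\pmod 2$. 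The key point, to be verified against the explicit construction in Section~\ref{sec:Bijection}, is that swapping ``even'' for ``odd'' in $\lambda^{(r-1)}$ flips the induced parity condition to $\equiv i\pmod 2$, so that the image of the restricted set is exactly $\tilde{\mathcal{U}}_{i,r}$.

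\textbf{Step 2 (combinatorial identity).} Since $\tilde{\mathcal{U}}_{i,r}$ and $\mathcal{U}_{i\pm 1,r}$ share the same parity condition $\equiv i\pmod 2$ and differ only in the bound on $f_1$, one has the inclusions $\mathcal{U}_{i-1,r}\subseteq \tilde{\mathcal{U}}_{i,r}\subseteq \mathcal{U}_{i+1,r}$ whose complements consist of partitions with $f_1=i-1$ and $f_1=i$ respectively. The identity then reduces to a weight-increasing-by-one bijection between these two complements, realized by $\lambda\mapsto \lambda\cup\{1\}$. The only issue is the gap condition at $u=1$: it could fail if $f_2=r-i$ in $\lambda$ with $f_1=i-1$, but in that case $f_1+f_2=r-1$ would activate the parity condition, giving $(r-1)+(r-i)\equiv i+1\pmod 2$, which contradicts the required $\equiv i\pmod 2$; hence this borderline case is empty. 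The boundary cases $i=1$ (using $F_{0,r}=0$ and $\mathcal{U}_{0,r}=\emptyset$) and $i=r$ (using $F_{r+1,r}=F_{r-1,r}$ and the fact that $\tilde{\mathcal{U}}_{r,r}=\mathcal{U}_{r-1,r}$, since $f_1=r-1$ fails the parity) are handled analogously. Applying Bressoud's Theorem~\ref{th:Bressoud} then replaces $U_{i\pm 1,r}$ by $F_{i\pm 1,r}$ on the right-hand side.

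\textbf{Step 3.} The analytic identity \eqref{eq:fij} follows immediately: multiply \eqref{eq:fijW} by $1+q$, apply the combinatorial identity of Step 2 coefficient by coefficient, and plug in the product formulas $\sum_{n\ge 0} F_{j,r}(n)q^n=(q^{2r},q^j,q^{2r-j};q^{2r})_\infty/(q)_\infty$ from Bressoud's identity. The hard part will be Step 1: one needs to inspect the explicit bijection of Theorem~\ref{th:bij} carefully enough to see that toggling the parity of the parts of $\lambda^{(r-1)}$ precisely toggles the parity condition in the image in $\mathcal{A}_r$. Once this verification is in hand, the remaining arguments are short combinatorial checks and standard $q$-series rearrangements.
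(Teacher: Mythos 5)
Your plan follows the paper's proof essentially verbatim: \eqref{eq:fijW} comes from restricting the bijection of Theorem~\ref{th:bij} to the parity-constrained subsets (the correspondence $\tilde{\mathcal{S}}_{i-1,r}\leftrightarrow\tilde{\mathcal{U}}_{i,r}$ in the paper's notation, via Corollaries~\ref{coro:inducelambda}~(6) and~\ref{coro:inducegamma}~(6) together with Proposition~\ref{prop:multisum}), and your Step 2 is exactly Lemma~\ref{lem:ajoutun}, including the key observation that the parity condition rules out the borderline case $f_1+f_2=r-1$, while your Step 3 matches the coefficient-extraction argument in Section~\ref{sec:cor}. The verification you defer in Step 1 is indeed where the paper's real work lies (Propositions~\ref{prop:inducedlambda} and~\ref{prop:inducedgamma}), but you have correctly identified both the needed statement and the mechanism.
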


Note that by Theorem~\ref{th:Bressoud} and Corollary~\ref{coro:newbressoud}, we have for all non-negative integers $n$ the equalities
$$\tilde{U}_{i,r}(n)+\tilde{U}_{i,r}(n-1)=U_{i+1,r}(n)+U_{i-1,r}(n-1).$$

Actually, by using~\eqref{eq:AGri} and~\eqref{eq:Bri} and studying the image of several subsets of $\mathcal{A}_r$ by our bijection in Theorem~\ref{th:bij}, we are able to derive combinatorially the following list of Andrews--Gordon and Bressoud type identities.

\begin{Corollary}[Andrews--Gordon and Bressoud type identities]\label{coro:list}
For any integer $r\geq 2$, we have 
\begin{eqnarray}
&&\hskip -1.5cm\sum_{s_1\geq \dots\geq s_{r-1}\geq 0}\frac{q^{s_1^2+\dots+s_{r-1}^2 -s_{1}-\cdots-s_{i}}}{(q)_{s_1-s_2}\dots(q)_{s_{r-2}-s_{r-1}}(q)_{s_{r-1}}}=\sum_{k=0}^{i}\frac{(q^{2r+1},q^{r-i+k},q^{r+i-k+1};q^{2r+1})_\infty}{(q)_\infty},\label{Br3.3form}\\
&&\hskip -1.5cm\sum_{s_1\geq \dots\geq s_{r-1}\geq 0}\frac{q^{s_1^2+\dots+s_{r-1}^2 -s_{1}-\cdots-s_{i}}}{(q)_{s_1-s_2}\dots(q)_{s_{r-2}-s_{r-1}}(q^2;q^2)_{s_{r-1}}}=\sum_{k=0}^{i}\frac{(q^{2r},q^{r-i+2k},q^{r+i-2k};q^{2r})_\infty}{(q)_\infty}\label{Br3.5form},\\
&&\hskip -1.5cm\sum_{s_1\geq \dots\geq s_{r-1}\geq 0}\frac{q^{s_1^2+\dots+s_{r-1}^2 -s_{1}-\cdots-s_{i}+s_{r-1}}}{(q)_{s_1-s_2}\dots(q)_{s_{r-2}-s_{r-1}}(q^2;q^2)_{s_{r-1}}} =\sum_{k=0}^{i}\frac{(q^{2r},q^{r-i+2k-1},q^{r+i-2k+1};q^{2r})_\infty}{(q)_\infty}\label{fijform},\\
&&\hskip -1.5cm\sum_{s_1\geq \dots\geq s_{r-1}\geq 0}\frac{q^{s_1^2+\dots+s_{r-1}^2 -s_{1}-\cdots-s_{i}}(1-q^{s_{i}})}{(q)_{s_1-s_2}\dots(q)_{s_{r-2}-s_{r-1}}(q)_{s_{r-1}}}= \frac{(q^{2r+1},q^{r-i},q^{r+i+1};q^{2r+1})_\infty}{(q)_\infty}\label{AGPform},\\
&&\hskip -1.5cm\sum_{s_1\geq \dots\geq s_{r-1}\geq 0}\frac{q^{s_1^2+\dots+s_{r-1}^2 -s_{1}-\cdots-s_{i}}(1-q^{s_{i}+s_{i-1}})}{(q)_{s_1-s_2}\dots(q)_{s_{r-2}-s_{r-1}}(q^2;q^2)_{s_{r-1}}} =2\frac{(q^{2r},q^{r-i},q^{r+i};q^{2r})_\infty}{(q)_\infty}\label{BPform},\\
&&\hskip -1.5cm\sum_{s_1\geq \dots\geq s_{r-1}\geq 0}\frac{q^{s_1^2+\dots+s_{r-1}^2 -s_{1}-\cdots-s_{i}}(1-q^{s_{i}+s_{r-1}})}{(q)_{s_1-s_2}\dots(q)_{s_{r-2}-s_{r-1}}(q^2;q^2)_{s_{r-1}}}= \frac{(q^{2r},q^{r-i},q^{r+i};q^{2r})_\infty}{(q)_\infty}\label{isaacform},\\
&&\hskip -1.5cm\sum_{s_1\geq \dots\geq s_{r-1}\geq 0}\frac{q^{s_1^2+\dots+s_{r-1}^2 -s_{1}-\cdots-s_{i}}(q^{s_{r-1}}-q^{s_i})}{(q)_{s_1-s_2}\dots(q)_{s_{r-2}-s_{r-1}}(q^2;q^2)_{s_{r-1}}}=\frac{(q^{2r},q^{r-i-1},q^{r+i+1};q^{2r})_\infty}{(q)_\infty}\label{fijform2},\\
&&\hskip -1.5cm\sum_{s_1\geq \dots\geq s_{r-1}\geq 0}\frac{q^{s_1^2+\dots+s_{r-1}^2 -s_{1}-\cdots-s_{i}+s_{r-1}}(1-q^{s_{i}+s_{i-1}})}{(q)_{s_1-s_2}\dots(q)_{s_{r-2}-s_{r-1}}(q^2;q^2)_{s_{r-1}}}\nonumber\\
&&\hskip 4cm =\frac{(q^{2r},q^{r-i-1},q^{r+i+1};q^{2r})_\infty+(q^{2r},q^{r-i+1},q^{r+i-1};q^{2r})_\infty}{(q)_\infty}\label{fijform3},
\end{eqnarray}
where $0\leq i\leq r-1$ for~\eqref{Br3.3form}--\eqref{fijform}, $1\leq i\leq r-1$ for~\eqref{AGPform},~\eqref{isaacform},~\eqref{fijform2}, and $2\leq i\leq r-1$ for~\eqref{BPform},~\eqref{fijform3}.
\end{Corollary}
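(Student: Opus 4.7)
The idea is to interpret each identity using Theorem~\ref{th:bij}: the left-hand side is the generating series of a specific weighted subset of $\mathcal{P}_r$ (or its Bressoud/Kur\c{s}ung\"oz analogue), and the right-hand side is the product form of the generating series of the corresponding subset of $\mathcal{A}_r$, coming from the Andrews--Gordon, Bressoud, or Kur\c{s}ung\"oz identities. A direct computation from Definition~\ref{def:mu} yields
\begin{equation*}
|\mu(s_1,\ldots,s_{r-1})|=\sum_{j=1}^{r-1}j\sum_{u=s_{j+1}}^{s_j-1}2u=s_1^2+\cdots+s_{r-1}^2-(s_1+\cdots+s_{r-1}),
\end{equation*}
which is the base exponent common to all multisums in Corollary~\ref{coro:list}; every additional linear term in the exponent will reflect an extra combinatorial constraint on the partitions $\lambda^{(j)}$.

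I would first prove the three base identities~\eqref{Br3.3form},~\eqref{Br3.5form}, and~\eqref{fijform}. For~\eqref{Br3.3form}, I consider the subset $\{\lambda\in\mathcal{A}_r:f_0(\lambda)\leq i\}$. Removing the zero parts gives a weight-preserving bijection between $\{\lambda\in\mathcal{A}_r:f_0(\lambda)=k\}$ and $\mathcal{T}_{r-k,r}$; summing over $k=0,\ldots,i$ and applying Theorem~\ref{th:AGseries} yields, after reindexing $k\mapsto i-k$, the product sum on the right-hand side of~\eqref{Br3.3form}. On the other side, tracking the condition $f_0(\lambda)\leq i$ through Theorem~\ref{th:bij} shows that its preimage in $\mathcal{P}_r$ consists of pairs $(\mu(s_1,\ldots,s_{r-1}),\lambda)$ such that the smallest part of $\lambda^{(j)}$ is at least $j-i$ for each $j=i+1,\ldots,r-1$. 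Its generating series therefore carries an extra factor $\prod_{j=i+1}^{r-1}q^{(j-i)(s_j-s_{j+1})}$, which telescopes to $q^{s_{i+1}+\cdots+s_{r-1}}$ and combined with $q^{|\mu|}$ produces exactly the exponent $s_1^2+\cdots+s_{r-1}^2-s_1-\cdots-s_i$ on the left-hand side of~\eqref{Br3.3form}. Identities~\eqref{Br3.5form} and~\eqref{fijform} are handled analogously in the Bressoud and Kur\c{s}ung\"oz settings; the $(q^2;q^2)_{s_{r-1}}$ factor in the denominator encodes that $\lambda^{(r-1)}$ is constrained to have even parts, corresponding combinatorially to the parity condition in Theorem~\ref{th:Bressoud} and Corollary~\ref{coro:newbressoud}.

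The five remaining identities~\eqref{AGPform}--\eqref{fijform3} follow by pure algebraic telescoping from the three base ones. Expanding the factor $(1-q^{\cdots})$ or $(q^{\cdots}-q^{\cdots})$ on the left writes each of them as a difference of two base multisums at nearby values of $i$, while on the right almost every product term cancels. Specifically,~\eqref{AGPform} comes from~\eqref{Br3.3form} at $i$ minus~\eqref{Br3.3form} at $i-1$;~\eqref{BPform} from~\eqref{Br3.5form} at $i$ minus~\eqref{Br3.5form} at $i-2$;~\eqref{isaacform} from~\eqref{Br3.5form} at $i$ minus~\eqref{fijform} at $i-1$;~\eqref{fijform2} from~\eqref{fijform} at $i$ minus~\eqref{Br3.5form} at $i-1$; and~\eqref{fijform3} from~\eqref{fijform} at $i$ minus~\eqref{fijform} at $i-2$. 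In each case the $k\mapsto k+1$ shift between the product-side sums leaves only one or two terms on the right, matching the stated right-hand side.

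The main obstacle is the combinatorial identification in the base step, namely verifying that the explicit bijection of Theorem~\ref{th:bij} sends the condition $f_0(\lambda)\leq i$ on $\mathcal{A}_r$ exactly to the condition ``smallest part of $\lambda^{(j)}$ is at least $j-i$ for every $j>i$'' on $\mathcal{P}_r$, and similarly for the Bressoud and Kur\c{s}ung\"oz analogues. This requires a careful inspection of the algorithm described in Section~\ref{sec:Bijection}, and is where the structure of the bijection is really used. Once it is in place, the telescoping that produces the other five identities is mechanical.
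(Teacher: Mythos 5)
Your proposal follows essentially the same route as the paper: both sides of each identity are identified as generating functions of corresponding subsets of $\mathcal{A}_r$ and $\mathcal{P}_r$ (the paper's $\mathcal{A}_{i,r},\mathcal{B}_{i,r},\tilde{\mathcal{B}}_{i,r}$ and $\mathcal{P}_{i,r},\mathcal{R}_{i,r},\tilde{\mathcal{R}}_{i,r}$), the restricted weight-preserving bijection of Theorem~\ref{th:bij} gives the equalities, and your telescoping derivation of \eqref{AGPform}--\eqref{fijform3} is exactly the paper's decomposition into difference sets such as $\mathcal{B}_{i,r}\setminus\mathcal{B}_{i-2,r}$ read at the level of generating functions. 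You correctly isolate the genuine work, namely verifying that the bijection restricts to these subsets (the paper's Corollaries~\ref{coro:inducelambda} and~\ref{coro:inducegamma}); the only detail glossed over by ``analogously'' is that the Bressoud-side product decomposition is not a verbatim copy of the Andrews--Gordon one, since the pieces of $\mathcal{B}_{i,r}$ with $f_0=k$ alternate between $\mathcal{U}$- and $\tilde{\mathcal{U}}$-type parity conditions according to the parity of $i-k$.
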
 

Identities~\eqref{Br3.3form} and~\eqref{Br3.5form}, together with~\eqref{eq:AGri} and~\eqref{eq:Bri}, were proven by Bressoud in~\cite{Br80} as special cases of a very general formula. In~\cite{DJK}, we proved and generalized all formulas~\eqref{eq:AGri}, \eqref{eq:Bri}, \eqref{eq:fij}, and~\eqref{Br3.3form}--\eqref{fijform} by using the Bailey lemma and lattice, and we explained why~\eqref{eq:fij} and~\eqref{fijform} are not consequences of Bressoud's general formula from~\cite{Br80}. In~\cite{ADJM}, a combinatorial conjecture of Afsharijoo arising from commutative algebra related to arc spaces was solved by using formula~\eqref{AGPform}, which is a direct consequence of~\eqref{Br3.3form}. It is also explained in~\cite{ADJM} how one can derive~\eqref{BPform} from~\eqref{Br3.5form}. One could also deduce similarly~\eqref{isaacform}--\eqref{fijform3} from~\eqref{Br3.5form} and~\eqref{fijform}. 

What we want to point out here is that our present approach yields all formulas in Corollary~\ref{coro:list} in a purely combinatorial way: indeed we prove that for all these formulas, both sides are generating functions of explicit subsets of $\mathcal{A}_r$ and $\mathcal{P}_r$, and our bijection from Theorem~\ref{th:bij} then yields the identities.

Recall also that the open problem of giving a combinatorial interpretation for the left-hand side of the aforementioned Bressoud general formula in~\cite{Br80} (when parameters have specific forms), known as Bressoud's conjecture, has been settled only recently by Kim~\cite{Ki} and He, Ji, and Zhao~\cite{HJZ1,HJZ2}. The main combinatorial tool they use is the so-called Gordon marking for partitions. Our method is different, as we do not use the Gordon marking. Moreover, although we do not prove a result as general as the former Bressoud conjecture, we manage to give combinatorial proofs of~\eqref{eq:fij} and~\eqref{fijform} which, as we already explained, are not special cases of Bressoud's result.

\medskip

This paper is organized as follows. In Section~\ref{sec:setup}, we give the combinatorial setup for our results by defining several sets of partitions and computing their generating functions. In Section~\ref{sec:Bijection}, we prove Theorem~\ref{th:bij} by giving the explicit bijection. Finally, in Section~\ref{sec:cor} we prove the three corollaries.

\section{The setup for our combinatorial approach}\label{sec:setup}

In this section, we define two types of combinatorial objects related to partitions, provided with a weight statistic. As will be seen, using either Gordon's Theorem~\ref{th:Gordon} or Bressoud's Theorem~\ref{th:Bressoud}, their generating functions are respectively the right and left-hand sides for the identities we are interested in, namely~\eqref{eq:AGri},~\eqref{eq:Bri},~\eqref{eq:fij}, and~\eqref{Br3.3form}--\eqref{fijform3}.

\subsection{Combinatorial description of the right-hand (product or sum of products) sides}

We first need some general results making the connection between the two combinatorial descriptions of partitions (in terms of parts and in terms of multiplicity sequences). Here we use the notations given in the introduction. 

In the literature, the set $\mathcal{T}_{i,r}$ of Theorem \ref{th:Gordon} is often described as the set of partitions $(f_u)_{u\geq 1}$ such that 
\begin{equation*}
\begin{cases}
f_1\leq i-1,\\
\text{for all }u\geq 1, \ f_u+f_{u+1}\leq r-1.
\end{cases}
\end{equation*}
This formulation is in particular more convenient when dealing with representation theory \cite{MP1} or Gr\"obner bases \cite{Pooneh}, and it will also be more suited to our combinatorial approach.

The following proposition states this type of correspondence between difference conditions on parts and restrictions on frequencies more generally, including the cases of $\mathcal{U}_{i,r}$ and $\tilde{\mathcal{U}}_{i,r}$.

\begin{Proposition}\label{prop:cdf}
Let $d,m$ be positive integers. Let $\lambda=(\lambda_1,\ldots,\lambda_\ell)=(f_u)_{u\geq 0}$ be a partition.
\begin{enumerate}
\item For all $1\leq k\leq \ell-m$,
\begin{equation*}\label{eq:conddif1}
\lambda_k-\lambda_{k+m}\geq d
\end{equation*}
if and only if for all $u\geq 0$,
\begin{equation*}\label{eq:conddif2}
f_u+f_{u+1}+\cdots+f_{u+d-1}\leq m.
\end{equation*}
\item Let $(P)$ be a property on integers. Then the following statements are equivalent:
\begin{equation*}
\label{eq:conddif1bis}
\begin{cases}
\text{for all } 1\leq k\leq \ell-m,\ \lambda_k-\lambda_{k+m}\geq d,\\
\text{for all } 1\leq k \leq \ell-m+1,\ \lambda_k-\lambda_{k+m-1}\leq d-1 \Rightarrow \lambda_k+\cdots+\lambda_{k+m-1}\text{ satisfies }(P),
\end{cases}
\end{equation*}
and
\begin{equation*}
\label{eq:conddif2bis}
\begin{cases}
\text{for all } u\geq 0,\ f_u+f_{u+1}+\cdots+f_{u+d-1}\leq m,\\
\text{for all } u\geq 0, \ f_u+\cdots+f_{u+d-1}= m \Rightarrow uf_u+\cdots+(u+d-1) f_{u+d-1}\text{ satisfies }(P).
\end{cases}
\end{equation*}
\end{enumerate}
\end{Proposition}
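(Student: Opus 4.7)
The plan rests on a single combinatorial observation: for any $u\geq 0$, the frequency sum $f_u+f_{u+1}+\cdots+f_{u+d-1}$ counts, with multiplicity, the number of parts of $\lambda$ lying in the window $\{u,u+1,\ldots,u+d-1\}$. Both parts of the proposition then become statements about how many consecutive parts can fit into a window of length $d$, and this translates easily in both directions.

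For part (1), I would prove the two implications directly. Assume first that $\lambda_k-\lambda_{k+m}\geq d$ whenever $k\leq \ell-m$. Fix $u\geq 0$; if $f_u+\cdots+f_{u+d-1}\geq m+1$, then since $\lambda$ is non-increasing there is some index $k$ with $\lambda_k,\lambda_{k+1},\ldots,\lambda_{k+m}$ all in $[u,u+d-1]$, giving $\lambda_k-\lambda_{k+m}\leq d-1<d$, a contradiction. Conversely, assume $f_u+\cdots+f_{u+d-1}\leq m$ for all $u$. For any $k\leq \ell-m$, set $u:=\lambda_{k+m}$: the $m+1$ parts $\lambda_k,\ldots,\lambda_{k+m}$ are all $\geq u$, so if $\lambda_k-\lambda_{k+m}\leq d-1$ they would all lie in $[u,u+d-1]$, forcing $f_u+\cdots+f_{u+d-1}\geq m+1$.

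For part (2), the first condition in each of the two systems is exactly the equivalence of part (1), so only the extra constraints on the sums need to be matched. The plan is to set up a correspondence between ``tight'' blocks of parts $(\lambda_k,\ldots,\lambda_{k+m-1})$ with $\lambda_k-\lambda_{k+m-1}\leq d-1$ and ``saturated'' windows with $f_u+\cdots+f_{u+d-1}=m$, via $u=\lambda_{k+m-1}$. Under the (already equivalent) difference condition of (1), such a block contains exactly the parts of $\lambda$ lying in $[u,u+d-1]$, and hence $\lambda_k+\cdots+\lambda_{k+m-1}=uf_u+\cdots+(u+d-1)f_{u+d-1}$. The same correspondence, read backwards, shows that any saturated window arises from a unique tight block: given $u$ with $f_u+\cdots+f_{u+d-1}=m$, let $k$ be the smallest index with $\lambda_k\leq u+d-1$, so that $\lambda_k,\ldots,\lambda_{k+m-1}$ are the $m$ parts in the window. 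The $(P)$-conditions then become equivalent since the tested quantities agree on each side of the correspondence.

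The main obstacle I anticipate is handling overlapping tight blocks, which appear when a run of consecutive parts saturates the difference condition $\lambda_k-\lambda_{k+m}=d$ exactly: several nearby blocks of length $m$ may all be ``tight'' and must each be matched to a distinct saturated window. Bookkeeping through $u=\lambda_{k+m-1}$ should do this cleanly, but verifying genuine bijectivity, and ruling out any spurious values of $u$ that are not of this form, will require a careful check that the value of $u=\lambda_{k+m-1}$ varies injectively with $k$ among tight blocks, and that every saturated window is hit.
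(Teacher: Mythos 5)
Your proof is correct and follows essentially the same route as the paper's, which merely sketches the same window-counting interpretation (a window of $d$ consecutive values contains at most $m$ parts, and tight blocks of $m$ consecutive parts correspond to saturated windows with equal sums). The bijectivity concern in your final paragraph is moot: since both $(P)$-conditions are universally quantified implications, it suffices that every tight block yields a saturated window with the same tested quantity and vice versa, which you already establish; injectivity of $k \mapsto \lambda_{k+m-1}$ is not needed.
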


\begin{proof}
The first part is classical, and is simply a way to describe either in terms of frequencies or in terms of differences between parts the following fact: ``in each interval of integers of length $d$, there are at most $m$ parts of the partition''.

The second part follows from a similar reasoning. The first line of each statement is the same as in (1), so they are equivalent. Then ``$\text{for all } u\geq 0,f_u+\cdots+f_{u+d-1}= m$'' implies that ``$\text{for all } 1\leq k \leq \ell-m+1,\ \lambda_k-\lambda_{k+m-1}\leq d-1$''. And together with ``$\text{for all } u\geq 0,\ f_u+f_{u+1}+\cdots+f_{u+d-1}\leq m$'', the statement ``$\text{for all } 1\leq k \leq \ell-m+1,\ \lambda_k-\lambda_{k+m-1}\leq d-1$'' implies ``$\text{for all } u\geq 0,f_u+\cdots+f_{u+d-1}= m$''.
Finally, ``$\text{for all } 1\leq k \leq \ell-m+1, \lambda_k+\cdots+\lambda_{k+m-1}\text{ satisfies }(P)$'' and ``$\text{for all } u\geq 0, uf_u+\cdots+(u+d-1) f_{u+d-1}\text{ satisfies }(P)$'' are just two different ways to say that the sum of any $m$ consecutive parts of the partition satisfies $(P)$.
\end{proof}

Using Proposition \ref{prop:cdf}, one can describe the sets $\mathcal{T}_{i,r}$, $\mathcal{U}_{i,r}$ and $\tilde{\mathcal{U}}_{i,r}$ from the introduction in terms of frequencies. These formulations both for $\mathcal{T}_{i,r}$ and $\mathcal{U}_{i,r}$ are widely used in the literature, see e.g. \cite{B80}.

\begin{Proposition}\label{prop:TUfreq}
The set $\mathcal{T}_{i,r}$ described in Theorem~\ref{th:Gordon} consists of partitions $(f_u)_{u\geq 0}$ such that 
\begin{equation}\label{eq:cdt}
\begin{cases}
f_0 =0,\\
f_1\leq i-1,\\
\text{for all }u\geq 1, \ f_u+f_{u+1}\leq r-1.
\end{cases}
\end{equation}
The set $\mathcal{U}_{i,r}$ described in Theorem~\ref{th:Bressoud} consists of partitions $(f_u)_{u\geq 0}$ such that 
\begin{equation}\label{eq:cdu}
\begin{cases}
f_0 =0,\\
f_1\leq i-1,\\
\text{for all }u\geq 1, \ f_u+f_{u+1}\leq r-1,\\
\text{for all }u\geq 1, \ f_u+f_{u+1}=r-1 \Rightarrow u f_u+(u+1) f_{u+1} \equiv i-1 \mod 2.
\end{cases}
\end{equation}
The set $\tilde{\mathcal{U}}_{i,r}$ described in Corollary~\ref{coro:newbressoud} consists of partitions $(f_u)_{u\geq 0}$ such that 
\begin{equation}\label{eq:cdunew}
\begin{cases}
f_0 =0,\\
f_1\leq i-1,\\
\text{for all }u\geq 1, \ f_u+f_{u+1}\leq r-1,\\
\text{for all }u\geq 1, \ f_u+f_{u+1}=r-1 \Rightarrow u f_u+(u+1) f_{u+1} \equiv i \mod 2.
\end{cases}
\end{equation}
\end{Proposition}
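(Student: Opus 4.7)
The plan is to obtain the three descriptions as direct specializations of Proposition~\ref{prop:cdf}, applied with $d = 2$ and $m = r - 1$, since in each of the sets $\mathcal{T}_{i,r}$, $\mathcal{U}_{i,r}$, $\tilde{\mathcal{U}}_{i,r}$ the base difference condition is $\lambda_j - \lambda_{j+r-1} \geq 2$.

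First, for $\mathcal{T}_{i,r}$ I would apply part (1) of Proposition~\ref{prop:cdf}: the condition $\lambda_j - \lambda_{j+r-1} \geq 2$ translates immediately into $f_u + f_{u+1} \leq r - 1$ for all $u \geq 0$. Combined with the convention that partitions in $\mathcal{T}_{i,r}$ have positive parts (so $f_0 = 0$) and with ``at most $i - 1$ parts equal to~$1$'' (so $f_1 \leq i - 1$), and observing that for $i \leq r$ the inequality $f_0 + f_1 = f_1 \leq i-1 \leq r - 1$ is automatic, the universal bound on consecutive frequencies can be restricted to $u \geq 1$, yielding exactly~\eqref{eq:cdt}.

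Next, for $\mathcal{U}_{i,r}$ I would apply part (2) of Proposition~\ref{prop:cdf} with the same $d, m$ and with the property $(P)$ being ``$\equiv i - 1 \pmod 2$''. With these parameters the left-hand side of the equivalence in Proposition~\ref{prop:cdf}(2) becomes exactly the defining conditions of $\mathcal{U}_{i,r}$ (the difference condition $\lambda_j - \lambda_{j+r-1} \geq 2$ together with the implication $\lambda_j - \lambda_{j+r-2} \leq 1 \Rightarrow \lambda_j + \cdots + \lambda_{j+r-2} \equiv i-1 \pmod 2$), while the right-hand side produces the corresponding frequency inequalities together with the parity implication $f_u + f_{u+1} = r - 1 \Rightarrow u f_u + (u+1) f_{u+1} \equiv i - 1 \pmod 2$. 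Adding $f_0 = 0$ and $f_1 \leq i - 1$ as before, and restricting the range to $u \geq 1$, gives~\eqref{eq:cdu}. The description of $\tilde{\mathcal{U}}_{i,r}$ is obtained by exactly the same argument, replacing the property $(P)$ by ``$\equiv i \pmod 2$'', yielding~\eqref{eq:cdunew}.

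No conceptual difficulty is expected: the proof is essentially bookkeeping, matching each defining condition of the three sets against the corresponding line in Proposition~\ref{prop:cdf}. The one mildly delicate point is verifying that the frequency conditions stated for extended partitions (with the universal index range $u \geq 0$) can be restricted to $u \geq 1$ without losing information once $f_0 = 0$ and $f_1 \leq i - 1$ are imposed. For $i < r$ this is immediate since $f_0 + f_1 \leq i - 1 < r - 1$; the borderline case $i = r$ only requires noting that the frequency condition at $u = 0$, namely $f_1 = r - 1 \Rightarrow f_1 \equiv i \pmod 2$, is already forced by the original defining condition applied to the constant block $\lambda_1 = \cdots = \lambda_{r-1} = 1$.
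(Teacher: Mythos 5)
Your proof follows exactly the paper's argument: both obtain \eqref{eq:cdt} from Proposition~\ref{prop:cdf}(1) and \eqref{eq:cdu}, \eqref{eq:cdunew} from Proposition~\ref{prop:cdf}(2), each with $d=2$, $m=r-1$, and $(P)$ the appropriate congruence, plus the bookkeeping $f_0=0$ and $f_1\leq i-1$. Your extra care about restricting the frequency conditions to $u\geq 1$ goes beyond what the paper records and is essentially sound; the only slip is in the borderline case $i=r$ for $\tilde{\mathcal{U}}_{r,r}$, where what is needed is the converse direction — that the conditions of \eqref{eq:cdunew} for $u\geq 1$ already exclude $f_1=r-1$ (indeed $f_1=r-1$ forces $f_2=0$ and then the parity implication at $u=1$ fails) — rather than the implication you state from the parts-based definition.
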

\begin{proof}
The description \eqref{eq:cdt} follows from Proposition \ref{prop:cdf} (1) with $d=2$, $m=r-1$, while \eqref{eq:cdu} (resp. \eqref{eq:cdunew}) follows from Proposition \ref{prop:cdf} (2) with $d=2$, $m=r-1$ and $(P)$ the property of being congruent to $i-1 \mod 2$ (resp. $i \mod 2$).
\end{proof}

We define several related sets of partitions in terms of their multiplicity sequences $(f_u)_{u\geq 0}$, where now $0$ parts are allowed. 

\begin{Definition}\label{def:ABB}
Let $r$ and $i$ be integers such that $r\geq 2$ and $0\leq i \leq r-1$.
\begin{itemize}
\item Let $\mathcal{A}_{i,r}$ be the set of partition $(f_u)_{u\geq 0}$ such that $f_0\leq i$ and $f_u+f_{u+1}\leq r-1$ for all $u$. 
\item Let $\mathcal{B}_{i,r}$ be the set of partitions $(f_u)_{u\geq 0}$ of $\mathcal{A}_{i,r}$ such that, for all $u$, $f_u+f_{u+1}=r-1$ only if
 $$u f_u+ (u+1) f_{u+1} \equiv r-1-i \mod 2.$$
\item Let $\tilde{\mathcal{B}}_{i,r}$ be the set of partitions $(f_u)_{u\geq 0}$ of $\mathcal{A}_{i,r}$ such that, for all $j$,  $f_u+f_{u+1}=r-1$ only if
 $$uf_u+ (u+1) f_{u+1} \equiv r-i \mod 2.$$
\end{itemize}
We choose the convention that $\mathcal{A}_{-1,r}=\mathcal{B}_{-1,r}=\tilde{\mathcal{B}}_{-1,r}=\mathcal{T}_{0,r}=\mathcal{U}_{0,r}=\tilde{\mathcal{U}}_{0,r}=\emptyset$.
\end{Definition}

Note that from our combinatorial point of view, as parity conditions always come in pairs, the set $\tilde{\mathcal{U}}_{i,r}$ (resp. $\tilde{\mathcal{B}}_{i,r}$) arises in a natural way together with $\mathcal{U}_{i,r}$ (resp. $\mathcal{B}_{i,r}$). This explains our discovery of Corollary~\ref{coro:newbressoud} and~\eqref{fijform}.

Also observe that $\mathcal{A}_{r-1,r}=\mathcal{A}_{r}$ defined in~\eqref{defAr}, and that for all $0 \leq i \leq r-1$, $\mathcal{A}_{i-1,r}\subset\mathcal{A}_{i,r}$, $\mathcal{B}_{i-1,r}=\mathcal{A}_{i-1,r}\cap\tilde{\mathcal{B}}_{i,r}$, and $\tilde{\mathcal{B}}_{i-1,r} = \mathcal{A}_{i-1,r}\cap \mathcal{B}_{i,r}$.
Similarly, $\mathcal{T}_{i,r}\subset\mathcal{T}_{i+1,r}$, $\mathcal{U}_{i,r}=\mathcal{T}_{i,r}\cap\tilde{\mathcal{U}}_{i+1,r}$, and $\tilde{\mathcal{U}}_{i,r} = \mathcal{T}_{i,r}\cap \mathcal{U}_{i+1,r}$.
The following results give a precise description of the relations between the sets of Definition \ref{def:ABB} and the sets $\mathcal{T}_{i,r}$, $\mathcal{U}_{i,r}$, $\tilde{\mathcal{U}}_{i,r}$.

\begin{Lemma}\label{lem:ajoutzero}
For all integers $i,r$ such that $r\geq 2$ and $0\leq i\leq r-1$, the map $(f_u)_{u\geq 0} \mapsto (f_u)_{u\geq 1}$ defines a weight-preserving bijection
\begin{enumerate}
\item from $\mathcal{A}_{i,r}\setminus \mathcal{A}_{i-1,r}$ to $\mathcal{T}_{r-i,r}$,
\item from $\mathcal{B}_{i,r}\setminus \tilde{\mathcal{B}}_{i-1,r}$ to $\mathcal{U}_{r-i,r}$,
\item from $\tilde{\mathcal{B}}_{i,r}\setminus \mathcal{B}_{i-1,r}$ to $\mathcal{U}_{r-i-1,r}$,
\end{enumerate}
with inverse bijection given by
$(f_1, f_2, \dots) \mapsto (i,f_1,f_2,\dots)$.
\end{Lemma}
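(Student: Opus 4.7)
The plan is to exploit the fact that the map $(f_u)_{u\geq 0}\mapsto(f_u)_{u\geq 1}$ simply discards the multiplicity $f_0$, which contributes $0$ to the weight; weight preservation is therefore automatic, and only the defining conditions of the source and target sets need to be matched. Using the relations $\tilde{\mathcal{B}}_{i-1,r}=\mathcal{A}_{i-1,r}\cap\mathcal{B}_{i,r}$ and $\mathcal{B}_{i-1,r}=\mathcal{A}_{i-1,r}\cap\tilde{\mathcal{B}}_{i,r}$ recorded just before the lemma, each set-difference source reduces cleanly to the corresponding set ($\mathcal{A}_{i,r}$, $\mathcal{B}_{i,r}$ or $\tilde{\mathcal{B}}_{i,r}$) further restricted to $f_0=i$. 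I will then proceed case by case, comparing the constraints on the pair $(f_0,f_1)$ inherited from the source against the $f_1$-condition and parity condition of Proposition~\ref{prop:TUfreq} describing the target, and checking that inserting $f_0=i$ back brings the target into the source.

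For (1), I observe that $f_0+f_1\leq r-1$ together with $f_0=i$ forces $f_1\leq r-i-1$, which is exactly the bound appearing in the description of $\mathcal{T}_{r-i,r}$; no parity condition is involved, and the inverse $(f_1,f_2,\ldots)\mapsto(i,f_1,f_2,\ldots)$ is evidently well-defined.

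For (2), the key point is that the parity class $\equiv r-1-i\pmod 2$ in $\mathcal{B}_{i,r}$ coincides with the class $\equiv (r-i)-1\pmod 2$ in $\mathcal{U}_{r-i,r}$, so all parity conditions for $u\geq 1$ transfer directly. The only additional contribution, at $u=0$ with $f_0+f_1=r-1$, gives $0\cdot f_0+1\cdot f_1=r-1-i$, which already lies in the required class; this case is therefore automatic, and the inverse lands in the correct set.

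The subtle case is (3), where a parity-forced shift of the $f_1$-bound is the main (and essentially only) obstacle. The parity class of $\tilde{\mathcal{B}}_{i,r}$ is $\equiv r-i\pmod 2$, but at $u=0$ with $f_0=i$ and $f_0+f_1=r-1$ one finds $0\cdot f_0+1\cdot f_1=r-1-i$, which lies in the opposite class. This incompatibility forces the strict inequality $f_0+f_1<r-1$, i.e.\ $f_1\leq r-i-2$, which is exactly the $f_1$-bound defining $\mathcal{U}_{r-i-1,r}$. Since $r-i\equiv r-i-2\pmod 2$, the parity conditions for $u\geq 1$ also transfer, and the inverse produces $f_0+f_1\leq i+(r-i-2)=r-2<r-1$, rendering the $u=0$ parity condition vacuous. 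Carefully tracking this parity-driven one-step reduction of the bound is the only delicate point of the proof.
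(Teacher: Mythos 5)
Your proposal is correct and follows essentially the same route as the paper's proof: reduce each set difference to the condition $f_0=i$, match the $f_1$-bound and parity classes against the frequency descriptions of $\mathcal{T}_{r-i,r}$ and $\mathcal{U}_{r-i,r}$, and in case (3) observe that the $u=0$ parity clash forces $f_1\leq r-i-2$. If anything, you spell out cases (2) and (3) in more detail than the paper, which treats (2) as ``similar to (1)''.
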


\begin{proof} As $\sum_{u\geq 0}uf_u = \sum_{u\geq 1}uf_u$, the map $(f_u)_{u\geq 0} \mapsto (f_u)_{u\geq 1}$ is weight-preserving.
\begin{enumerate}
\item[(1)] For all $(f_u)_{u\geq 0}\in \mathcal{A}_{i,r}\setminus \mathcal{A}_{i-1,r}$, we have $f_0=i$ and $f_u+f_{u+1}\leq r-1$ for all $u\geq 0$. Therefore $f_1\leq r-1-i$ and $f_u+f_{u+1}\leq r-1$ for all $u\geq 1$, and by~\eqref{eq:cdt}, the partition $(f_u)_{u\geq 1}$ belongs to $\mathcal{T}_{r-i,r}$. Conversely, for all $(f_u)_{u\geq 1} \in \mathcal{T}_{r-i,r}$, by setting $f_0=i$, we obtain that $(f_u)_{u\geq 0} \in \mathcal{A}_{i,r}\setminus \mathcal{A}_{i-1,r}$.
\item[(2)] The proof is similar to (1), using~\eqref{eq:cdu} instead of~\eqref{eq:cdt}.
\item[(3)] For all $(f_u)_{u\geq 0}\in \tilde{\mathcal{B}}_{i,r}\setminus \mathcal{B}_{i-1,r}$, we have $f_0=i$, and $f_u+f_{u+1}\leq r-1$ with equality only if $uf_u+(u+1)f_{u+1}\equiv r-i \mod 2$ for all $u\geq 0$. Thus $f_1\neq r-1-i$, and for all $(f_u)_{u\geq 0}\in \tilde{\mathcal{B}}_{i,r}\setminus \mathcal{B}_{i-1,r}$, we have $f_1\leq r-2-i$, and $f_u+f_{u+1}\leq r-1$ with equality only if $uf_u+(u+1)f_{u+1}\equiv r-i \mod 2$ for all $u\geq 1$. Hence, by ~\eqref{eq:cdu}, the partition $(f_u)_{u\geq 1}$ belongs to $\mathcal{U}_{r-i-1,r}$. Conversely, for all $(f_u)_{u\geq 1} \in \mathcal{U}_{r-i-1,r}$, by setting $f_0=i$, we obtain that $(f_u)_{u\geq 0}\in \tilde{\mathcal{B}}_{i,r}\setminus \mathcal{B}_{i-1,r}$.
\end{enumerate}
\end{proof}

The next result will be useful for proving Corollary~\ref{coro:newbressoud}.

\begin{Lemma}\label{lem:ajoutun}
For all integers $i,r$ such that $r\geq 2$ and $1\leq i\leq r-1$, the map $(f_u)_{u\geq 0} \mapsto (f_0,f_1-1,f_2,f_3,\ldots)$ defines a bijection from
$\mathcal{U}_{i+1,r}\setminus \tilde{\mathcal{U}}_{i,r}$ to $\tilde{\mathcal{U}}_{i,r}\setminus \mathcal{U}_{i-1,r}$, which decreases the weight by $1$.
\end{Lemma}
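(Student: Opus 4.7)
The plan is to unfold the set-theoretic differences using the frequency descriptions of Proposition~\ref{prop:TUfreq}. A key observation is that $\mathcal{U}_{i+1,r}$ and $\tilde{\mathcal{U}}_{i,r}$ share exactly the same parity constraint ($u f_u + (u+1) f_{u+1} \equiv i \mod 2$ whenever $f_u + f_{u+1} = r-1$) and differ only in the bound $f_1 \leq i$ versus $f_1 \leq i-1$. Similarly, $\tilde{\mathcal{U}}_{i,r}$ and $\mathcal{U}_{i-1,r}$ share the parity requirement $\equiv i \mod 2$ but demand $f_1 \leq i-1$ versus $f_1 \leq i-2$. From this I would deduce
$$\mathcal{U}_{i+1,r} \setminus \tilde{\mathcal{U}}_{i,r} = \{(f_u) \in \mathcal{U}_{i+1,r} : f_1 = i\}, \quad \tilde{\mathcal{U}}_{i,r} \setminus \mathcal{U}_{i-1,r} = \{(g_u) \in \tilde{\mathcal{U}}_{i,r} : g_1 = i-1\},$$
so that the only action of the map is to decrease $f_1$ by $1$, matching these two families.

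For the forward direction, starting from $(f_u) \in \mathcal{U}_{i+1,r}$ with $f_1 = i$, I would check that $(f_0, f_1-1, f_2, \ldots)$ lies in $\tilde{\mathcal{U}}_{i,r}$. For $u \geq 2$, both the sum condition $f_u + f_{u+1} \leq r-1$ and the parity condition transfer verbatim, since none of these entries are altered. At $u = 1$ the new sum $(i-1) + f_2 \leq r-2$ is strictly less than $r-1$, so the parity condition is vacuous there. The image has $f_1' = i-1$, hence it indeed avoids $\mathcal{U}_{i-1,r}$, and the weight drops by exactly $1$.

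The main obstacle is to verify that the inverse $(g_0, g_1, g_2, \ldots) \mapsto (g_0, g_1+1, g_2, \ldots)$, applied to $(g_u) \in \tilde{\mathcal{U}}_{i,r}$ with $g_1 = i-1$, lands in $\mathcal{U}_{i+1,r}$. The sum condition at $u = 1$ requires $i + g_2 \leq r-1$, whereas the hypotheses on $(g_u)$ alone only give $g_1 + g_2 \leq r-1$, i.e. $g_2 \leq r-i$. The critical input is the parity condition of $\tilde{\mathcal{U}}_{i,r}$: if $g_2 = r-i$ held, then $g_1 + g_2 = r-1$ would force $g_1 + 2g_2 = (i-1) + 2(r-i) \equiv i-1 \mod 2$, contradicting the required $\equiv i \mod 2$. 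This rules out $g_2 = r-i$, so in fact $g_2 \leq r-1-i$ and $i + g_2 \leq r-1$; if the equality $i + g_2 = r-1$ is attained, the new parity $i + 2g_2 \equiv i \mod 2$ is automatic. The conditions for $u \geq 2$ again transfer unchanged, and the resulting partition has $f_1' = i$, placing it in $\mathcal{U}_{i+1,r} \setminus \tilde{\mathcal{U}}_{i,r}$. The two maps are manifestly mutual inverses, completing the bijection.
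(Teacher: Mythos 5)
Your proof is correct and follows essentially the same route as the paper: both identify $\mathcal{U}_{i+1,r}\setminus\tilde{\mathcal{U}}_{i,r}$ and $\tilde{\mathcal{U}}_{i,r}\setminus\mathcal{U}_{i-1,r}$ as the partitions of $\mathcal{U}_{i+1,r}$ with $f_1=i$ and $f_1=i-1$ respectively, and both use the shared parity constraint $\equiv i \bmod 2$ to rule out $f_1+f_2=r-1$ when $f_1=i-1$, which is exactly what makes the inverse map land in the right set. Your write-up is in fact slightly more explicit than the paper's about why the parity implication at $u=1$ is vacuous or automatic in each direction, but the argument is the same.
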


\begin{proof}
Note that by~\eqref{eq:cdu} and~\eqref{eq:cdunew}, the set $\mathcal{U}_{i+1,r}\setminus \tilde{\mathcal{U}}_{i,r}$ consists of the partitions of $\mathcal{U}_{i+1,r}$ such that $f_1=i$, while $\tilde{\mathcal{U}}_{i,r}\setminus \mathcal{U}_{i-1,r}$ consists of the partitions of $\mathcal{U}_{i+1,r}$ such that $f_1=i-1$. Hence, by the uniqueness of the multiplicity sequence, the map $(f_u)_{u\geq 0} \mapsto (f_0,f_1-1,f_2,f_3,\ldots)$ defines an injection from
$\mathcal{U}_{i+1,r}\setminus \tilde{\mathcal{U}}_{i,r}$ to $\tilde{\mathcal{U}}_{i,r}\setminus \mathcal{U}_{i-1,r}$.

Conversely, let $(f_u)_{u\geq 0}$ be a partition in $\tilde{\mathcal{U}}_{i,r}\setminus \mathcal{U}_{i-1,r}$. It is therefore a partition of $\mathcal{U}_{i+1,r}$ such that $f_1=i-1$. In particular $f_1+2f_2 \equiv i-1 \mod 2$, thus by definition of $\mathcal{U}_{i+1,r}$, we cannot have $f_1+f_2=r-1$. Hence, $f_1+f_2\leq r-2$. Then, by adding a part $1$ to the partition, we obtain a new partition with  multiplicity sequence $(f_0,f_1+1,f_2,f_3,\ldots)$ in $\mathcal{U}_{i+1,r}$ such that $f_1+1 = i$.  The map $(f_u)_{u\geq 0} \mapsto (f_0,f_1-1,f_2,f_3,\ldots)$ thus defines a surjection from
$\mathcal{U}_{i+1,r}\setminus \tilde{\mathcal{U}}_{i,r}$ to $\tilde{\mathcal{U}}_{i,r}\setminus \mathcal{U}_{i-1,r}$, and we can conclude.
\end{proof}

Provided Theorems~\ref{th:Gordon} and~\ref{th:Bressoud}, and Lemmas~\ref{lem:ajoutzero} and~\ref{lem:ajoutun}, a natural combinatorial description emerges for the right-hand sides of identities~~\eqref{eq:AGri},~\eqref{eq:Bri},~\eqref{eq:fij}, and~\eqref{Br3.3form}--\eqref{fijform3}, in terms of generating functions of sets related to $\mathcal{A}_{i,r}, \mathcal{B}_{i,r},\tilde{\mathcal{B}}_{i,r}, \mathcal{T}_{i,r}, \mathcal{U}_{i,r}$, and $\tilde{\mathcal{U}}_{i,r}$. Note that the right-hand sides of~\eqref{AGribis}--\eqref{Bri2bis} correspond to the ones of~\eqref{eq:AGri},~\eqref{eq:Bri}, and~\eqref{eq:fij}, respectively. The right-hand sides of~\eqref{Br3.3bis}--\eqref{BPbisbis} are the ones of~\eqref{Br3.3form}--\eqref{fijform3}.

\begin{Proposition}\label{prop:prod}
For all integer $r\geq 2$, we have 
\begin{align}
\label{AGribis}
&\sum_{\lambda \in \mathcal{T}_{i,r}} q^{|\lambda|} = \frac{(q^{2r+1},q^{i},q^{2r-i+1};q^{2r+1})_\infty}{(q)_\infty}&\text{for  } 1\leq i\leq r,\\
\label{Bribis}
&\sum_{\lambda \in \mathcal{U}_{i,r}} q^{|\lambda|} = \frac{(q^{2r},q^{i},q^{2r-i};q^{2r})_\infty}{(q)_\infty} &\text{for  } 1\leq i\leq r,\\
\label{Bri2bis}
&(1+q)\sum_{\lambda \in \tilde{\mathcal{U}}_{i,r}} q^{|\lambda|} =\frac{(q^{2r},q^{i+1},q^{2r-i-1};q^{2r})_\infty+q(q^{2r},q^{i-1},q^{2r-i+1};q^{2r})_\infty}{(q)_\infty}&\text{for  } 1\leq i\leq r,\\
\label{Br3.3bis}
&\sum_{\lambda \in \mathcal{A}_{i,r}} q^{|\lambda|} = \sum_{k=0}^{i}\frac{(q^{2r+1},q^{r-i+k},q^{r+i-k+1};q^{2r+1})_\infty}{(q)_\infty}&\text{for  } 0\leq i\leq r-1,\\
\label{Br3.5bis}
&\sum_{\lambda \in \mathcal{B}_{i,r}} q^{|\lambda|} = \sum_{k=0}^{i}\frac{(q^{2r},q^{r-i+2k},q^{r+i-2k};q^{2r})_\infty}{(q)_\infty} &\text{for    } 0\leq i\leq r-1,\\
\label{fijbis}
&\sum_{\lambda \in \tilde{\mathcal{B}}_{i,r}} q^{|\lambda|} =\sum_{k=0}^{i}\frac{(q^{2r},q^{r-i+2k-1},q^{r+i-2k+1};q^{2r})_\infty}{(q)_\infty} &\text{for  } 0\leq i\leq r-1,\\
\label{AGPbis}
&\sum_{\lambda \in \mathcal{A}_{i,r}\setminus \mathcal{A}_{i-1,r}} q^{|\lambda|} = \frac{(q^{2r+1},q^{r-i},q^{r+i+1};q^{2r+1})_\infty}{(q)_\infty}&\text{for  } 0\leq i\leq r-1,\\
\label{BPbis}
&\sum_{\lambda \in \mathcal{B}_{i,r}\setminus \mathcal{B}_{i-2,r}} q^{|\lambda|} = 2\frac{(q^{2r},q^{r-i},q^{r+i};q^{2r})_\infty}{(q)_\infty}&\text{for  } 1\leq i\leq r-1,\\
\label{isaacbis}
&\sum_{\lambda \in \mathcal{B}_{i,r}\setminus \tilde{\mathcal{B}}_{i-1,r}} q^{|\lambda|} = \frac{(q^{2r},q^{r-i},q^{r+i};q^{2r})_\infty}{(q)_\infty}&\text{for  } 0\leq i\leq r-1,\\
\label{isaacbisbis}
&\sum_{\lambda \in \tilde{\mathcal{B}}_{i,r}\setminus \mathcal{B}_{i-1,r}} q^{|\lambda|} = \frac{(q^{2r},q^{r-i-1},q^{r+i+1};q^{2r})_\infty}{(q)_\infty}&\text{for  } 0\leq i\leq r-1,\\
\label{BPbisbis}
&\sum_{\lambda \in \tilde{\mathcal{B}}_{i,r}\setminus \tilde{\mathcal{B}}_{i-2,r}} q^{|\lambda|} = \frac{(q^{2r},q^{r-i-1},q^{r+i+1};q^{2r})_\infty+(q^{2r},q^{r-i+1},q^{r+i-1};q^{2r})_\infty}{(q)_\infty}&\text{for  } 1\leq i\leq r-1.
\end{align}

\end{Proposition}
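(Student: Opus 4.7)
The plan is to deduce every formula in Proposition~\ref{prop:prod} from the input identities of Theorems~\ref{th:Gordon} and~\ref{th:Bressoud}, combined with the bijective Lemmas~\ref{lem:ajoutzero} and~\ref{lem:ajoutun}. First, \eqref{AGribis} and \eqref{Bribis} are immediate, since the right-hand sides are by definition the generating functions of $\mathcal{E}_{i,r}$ and $\mathcal{F}_{i,r}$, which are equinumerous with $\mathcal{T}_{i,r}$ and $\mathcal{U}_{i,r}$ by Theorems~\ref{th:Gordon} and~\ref{th:Bressoud}.

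The three single-product set-difference identities \eqref{AGPbis}, \eqref{isaacbis} and \eqref{isaacbisbis} then follow at once by applying, respectively, parts (1), (2) and (3) of Lemma~\ref{lem:ajoutzero} to \eqref{AGribis} and \eqref{Bribis}. For the ``doubled'' identities \eqref{BPbis} and \eqref{BPbisbis}, the key is the chain of inclusions $\mathcal{B}_{i-2,r}\subset\tilde{\mathcal{B}}_{i-1,r}\subset\mathcal{B}_{i,r}$ (and the analogous chain with $\mathcal{B}$ and $\tilde{\mathcal{B}}$ swapped), which is a direct consequence of the set identities $\mathcal{B}_{j-1,r}=\mathcal{A}_{j-1,r}\cap\tilde{\mathcal{B}}_{j,r}$ and $\tilde{\mathcal{B}}_{j-1,r}=\mathcal{A}_{j-1,r}\cap\mathcal{B}_{j,r}$ noted just after Definition~\ref{def:ABB}. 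This splits each set difference into two consecutive pieces, each identified by Lemma~\ref{lem:ajoutzero}(2)--(3); for \eqref{BPbis} both pieces biject with $\mathcal{U}_{r-i,r}$, producing the factor of $2$, while for \eqref{BPbisbis} they contribute the two distinct summands of the right-hand side.

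For the summed identities \eqref{Br3.3bis}, \eqref{Br3.5bis} and \eqref{fijbis}, I would iterate the previous step. The nested chain $\mathcal{A}_{0,r}\subset\mathcal{A}_{1,r}\subset\cdots\subset\mathcal{A}_{i,r}$ telescopes into $\bigsqcup_{k=0}^{i}(\mathcal{A}_{k,r}\setminus\mathcal{A}_{k-1,r})$, and the alternating chain $\mathcal{B}_{i,r}\supset\tilde{\mathcal{B}}_{i-1,r}\supset\mathcal{B}_{i-2,r}\supset\tilde{\mathcal{B}}_{i-3,r}\supset\cdots$ (and its $\tilde{\mathcal{B}}_{i,r}$-counterpart) telescopes similarly. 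Applying Lemma~\ref{lem:ajoutzero} to each consecutive set difference and substituting \eqref{AGribis} or \eqref{Bribis} expresses the generating functions as the desired sums of infinite products; the symmetry $k\leftrightarrow i-k$ of the summand $(q^{2r},q^{r-i+2k},q^{r+i-2k};q^{2r})_\infty$ cleanly matches the pairing of $\mathcal{B}$- and $\tilde{\mathcal{B}}$-pieces along each chain.

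Finally, \eqref{Bri2bis} is obtained by turning Lemma~\ref{lem:ajoutun} into a generating-function identity. Writing $\mathcal{U}_{i+1,r}=(\mathcal{U}_{i+1,r}\setminus\tilde{\mathcal{U}}_{i,r})\sqcup\tilde{\mathcal{U}}_{i,r}$ and $\tilde{\mathcal{U}}_{i,r}=(\tilde{\mathcal{U}}_{i,r}\setminus\mathcal{U}_{i-1,r})\sqcup\mathcal{U}_{i-1,r}$, and using the weight-$(-1)$ bijection from Lemma~\ref{lem:ajoutun}, one obtains
\[
(1+q)\sum_{\lambda\in\tilde{\mathcal{U}}_{i,r}}q^{|\lambda|}=\sum_{\lambda\in\mathcal{U}_{i+1,r}}q^{|\lambda|}+q\sum_{\lambda\in\mathcal{U}_{i-1,r}}q^{|\lambda|},
\]
into which \eqref{Bribis}, together with the conventions $F_{0,r}(n)=0$ and $F_{r+1,r}(n)=F_{r-1,r}(n)$ for the boundary cases $i=1$ and $i=r$, plugs directly. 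The main care-point throughout is the parity bookkeeping along the $\mathcal{B}/\tilde{\mathcal{B}}$ chains, where one must track which of Lemma~\ref{lem:ajoutzero}(2) or (3) is the one to apply at each step, but no deeper combinatorial input is needed beyond the two lemmas.
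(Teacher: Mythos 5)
Your proposal is correct and follows essentially the same route as the paper: the base cases \eqref{AGribis}--\eqref{Bribis} from Theorems~\ref{th:Gordon} and~\ref{th:Bressoud}, the single-product differences from the three parts of Lemma~\ref{lem:ajoutzero}, the telescoping along the nested $\mathcal{A}$-chain and the alternating $\mathcal{B}/\tilde{\mathcal{B}}$-chain (with the same $k\leftrightarrow i-k$ symmetry for \eqref{Br3.5bis}), and the Lemma~\ref{lem:ajoutun} computation for \eqref{Bri2bis}. The only cosmetic divergence is that the paper derives \eqref{fijbis} by reindexing \eqref{Br3.5bis} at $i-1$ rather than re-telescoping, and handles $i=r$ in \eqref{Bri2bis} via the observation $\tilde{\mathcal{U}}_{r,r}=\mathcal{U}_{r-1,r}$ instead of a convention; neither affects correctness.
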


\begin{proof}
Identity~\eqref{AGribis} (resp.~\eqref{Bribis}) is a direct consequence of Theorem~\ref{th:Gordon} (resp. Theorem~\ref{th:Bressoud}) and~\eqref{eq:cdt} (resp.~\eqref{eq:cdu}) in Proposition~\ref{prop:TUfreq}. 

Moreover, for $1\leq i\leq r-1$, we have $\mathcal{U}_{i-1,r}\subset \tilde{\mathcal{U}}_{i,r}\subset \mathcal{U}_{i+1,r}$, and
\begin{align*}
(1+q)\sum_{\lambda \in \tilde{\mathcal{U}}_{i,r}} q^{|\lambda|} &= \sum_{\lambda \in \tilde{\mathcal{U}}_{i,r}} q^{|\lambda|} + \sum_{\lambda \in \tilde{\mathcal{U}}_{i,r}\setminus \mathcal{U}_{i-1,r}} q^{|\lambda|+1} + q\sum_{\lambda \in \mathcal{U}_{i-1,r}} q^{|\lambda|}\\
&=\sum_{\lambda \in \tilde{\mathcal{U}}_{i,r}} q^{|\lambda|} + \sum_{\lambda \in \mathcal{U}_{i+1,r}\setminus \tilde{\mathcal{U}}_{i,r}} q^{|\lambda|} + q\sum_{\lambda \in \mathcal{U}_{i-1,r}} q^{|\lambda|}&\text{by Lemma  ~\ref{lem:ajoutun}}\\
&=\sum_{\lambda \in \mathcal{U}_{i+1,r}} q^{|\lambda|}+q\sum_{\lambda \in \mathcal{U}_{i-1,r}} q^{|\lambda|}.
\end{align*}
Using~\eqref{Bribis}, we obtain~\eqref{Bri2bis} for $1\leq i\leq r-1$. Observe that $\tilde{\mathcal{U}}_{r,r} = \mathcal{U}_{r-1,r}$ by~\eqref{eq:cdu} and~\eqref{eq:cdunew}, so that~\eqref{Bri2bis} holds for $i=r$.

Formula~\eqref{AGPbis} comes from Lemma~\ref{lem:ajoutzero}~(1) and Theorem~\ref{th:Gordon}. Noting that
$$\mathcal{A}_{i,r} = \bigsqcup_{k=0}^i  (\mathcal{A}_{i-k,r}\setminus \mathcal{A}_{i-1-k,r}),$$
we deduce~\eqref{Br3.3bis}. Lemma~\ref{lem:ajoutzero}~(2) and Theorem~\ref{th:Bressoud} yield~\eqref{isaacbis}. By Lemma~\ref{lem:ajoutzero} (3) and Theorem~\ref{th:Bressoud}, we derive~\eqref{isaacbisbis}. Using~\eqref{isaacbis}, \eqref{isaacbisbis}, and the equality $\mathcal{B}_{i,r}\setminus \mathcal{B}_{i-2,r} = (\mathcal{B}_{i,r}\setminus \tilde{\mathcal{B}}_{i-1,r})\sqcup (\tilde{\mathcal{B}}_{i-1,r}\setminus \mathcal{B}_{i-2,r} )$, we deduce~\eqref{BPbis}. 

To prove~\eqref{Br3.5bis}, first observe that  
$$\mathcal{B}_{i,r} = \left(\bigsqcup_{k=0}^{\lfloor i/2\rfloor}\mathcal{B}_{i-2k,r}\setminus \tilde{\mathcal{B}}_{i-2k-1,r}\right)\sqcup \left(\bigsqcup_{k=0}^{\lfloor (i-1)/2\rfloor}\tilde{\mathcal{B}}_{i-2k-1,r}\setminus \mathcal{B}_{i-2k-2,r}\right).$$
By ~\eqref{isaacbisbis}, we have
$$\sum_{\lambda \in \tilde{\mathcal{B}}_{i-2k-1,r}\setminus \mathcal{B}_{i-2k-2,r}} q^{|\lambda|}= \frac{(q^{2r},q^{r-i+2k},q^{r+i-2k};q^{2r})_\infty}{(q)_\infty} = \frac{(q^{2r},q^{r-i+2(i-k)},q^{r+i-2(i-k)};q^{2r})_\infty}{(q)_\infty}.$$
Hence, using~\eqref{isaacbis}, we deduce
$$\sum_{\lambda \in \mathcal{B}_{i,r}} q^{|\lambda|} = \sum_{k=0}^{\lfloor i/2\rfloor} \frac{(q^{2r},q^{r-i+2k},q^{r+i-2k};q^{2r})_\infty}{(q)_\infty} + \sum_{k=i-\lfloor (i-1)/2\rfloor}^i \frac{(q^{2r},q^{r-i+2k},q^{r+i-2k};q^{2r})_\infty}{(q)_\infty}.$$
Since $i=\lfloor i/2\rfloor+\lceil i/2\rceil = \lfloor i/2\rfloor + \lfloor (i-1)/2\rfloor+1$, the integers $\lfloor i/2\rfloor$ and $i-\lfloor(i-1)/2\rfloor$ are consecutive and ~\eqref{Br3.5bis} holds.

Next, by ~\eqref{Br3.5bis} we have
$$\sum_{\lambda \in \mathcal{B}_{i-1,r}} q^{|\lambda|} = \sum_{k=0}^{i-1}\frac{(q^{2r},q^{r-i+1+2k},q^{r+i-1-2k};q^{2r})_\infty}{(q)_\infty}=\sum_{k=1}^{i}\frac{(q^{2r},q^{r-i-1+2k},q^{r+i+1-2k};q^{2r})_\infty}{(q)_\infty},$$
therefore we derive~\eqref{fijbis} using $\tilde{\mathcal{B}}_{i,r} = (\tilde{\mathcal{B}}_{i,r}\setminus \mathcal{B}_{i-1,r})\sqcup \mathcal{B}_{i-1,r}$ and~\eqref{isaacbisbis}. 

Finally, writing $\tilde{\mathcal{B}}_{i,r}\setminus \tilde{\mathcal{B}}_{i-2,r}=(\tilde{\mathcal{B}}_{i,r}\setminus \mathcal{B}_{i-1,r})\sqcup (\mathcal{B}_{i-1,r}\setminus \tilde{\mathcal{B}}_{i-2,r})$, and using~\eqref{isaacbisbis} and~\eqref{isaacbis}, we derive~\eqref{BPbisbis}.

\end{proof}

\subsection{Combinatorial description of the left-hand (multisum) sides}

Let $r\geq2$ be an integer, let $s_1\geq \cdots \geq s_{r-1}\geq 0$ be integers, and set $s_0=\infty$ and $s_r= 0$. On the multisum sides of~\eqref{eq:AGri},~\eqref{eq:Bri},~\eqref{eq:fij}, and~\eqref{Br3.3form}--\eqref{fijform3}, the summands can all be factorized by $q^{s_1^2+\cdots+s_{r-1}^2 -s_{1}-\cdots-s_{r-1}}$, which does not depend on $i$. Hence, for generating all these multisum sides, we first need a partition whose weight is $s_1^2+\cdots+s_{r-1}^2 -s_{1}-\cdots-s_{r-1}$: this is $\mu(s_1,\ldots,s_{r-1})$ from Definition~\ref{def:mu}. Indeed,  for all $j\in \{0,\ldots,r-1\}$ and all $u\in\{s_{j+1},\ldots,s_j-1\}$, we have $f_{2u}+f_{2u+1}=j$. Therefore the number of parts of $\mu(s_1,\ldots,s_{r-1})$ is
$$\sum_{u\geq 0} f_u = (r-1) s_{r-1}+\sum_{j=1}^{r-2} j  (s_{j}-s_{j+1})=s_{1}+\cdots+s_{r-1},$$
and its weight is
$$\sum_{u\geq 0}uf_u = \sum_{j=1}^{r-1} \sum_{u=s_{j+1}}^{s_j-1}j\cdot 2u= \sum_{j=1}^{r-1} \sum_{u=s_{j+1}}^{s_j-1}\sum_{k=1}^j 2u=\sum_{k=1}^{r-1} \sum_{j=k}^{r-1}\sum_{u=s_{j+1}}^{s_{j}-1} 2u =\sum_{k=1}^{r-1}\sum_{u=0}^{s_{k}-1} 2u,
$$
which gives
\begin{equation}\label{eq:weightmu}
|\mu(s_1,\ldots,s_{r-1})|= s_1^2+\cdots+s_{r-1}^2 -s_{1}-\cdots-s_{r-1}.
\end{equation}

We now define $(r-1)$-tuples of partitions in order to explain the $q$-Pochhammer symbols in the denominator of the multisum sides of our identities.
 
\begin{Definition}\label{def:LHSsets}
Recall from Definition \ref{def:P} that $\mathcal{P}(s_1,\ldots,s_{r-1})$ is the set of sequences $\lambda=(\lambda_0,\ldots,\lambda_{s_1-1})$ of non-negative integers such that for all $j\in \{1,\ldots,r-1\}$, the sequence $(\lambda_{s_{j+1}},\ldots,\lambda_{s_j-1})$ is non-decreasing.
Let $|\lambda|:=\lambda_0+\cdots+\lambda_{s_1-1}$ denote the weight of $\lambda$. For all $0\leq i\leq r-1$, we now define the following subsets of $\mathcal{P}(s_1,\ldots,s_{r-1})$:
\begin{itemize}
\item Let $\mathcal{P}_{i,r}(s_1,\ldots,s_{r-1})$ be the subset of $\mathcal{P}(s_1,\ldots,s_{r-1})$ whose elements $\lambda=(\lambda_0,\ldots,\lambda_{s_1-1})$ satisfy: $\lambda_{s_{j+1}}\geq j-i$ for all $1\leq j\leq r-1$.
\item Let $\mathcal{R}_{i,r}(s_1,\ldots,s_{r-1})$ be the subset of $\mathcal{P}_{i,r}(s_1,\ldots,s_{r-1})$ whose elements $\lambda=(\lambda_0,\ldots,\lambda_{s_1-1})$ satisfy: $\lambda_{0},\ldots,\lambda_{s_{r-1}-1}$ have the same parity as $r-1-i$.
\item Let $\tilde{\mathcal{R}}_{i,r}(s_1,\ldots,s_{r-1})$ be the subset of $\mathcal{P}_{i,r}(s_1,\ldots,s_{r-1})$  whose elements $\lambda=(\lambda_0,\ldots,\lambda_{s_1-1})$ satisfy: $\lambda_{0},\ldots,\lambda_{s_{r-1}-1}$ have the same parity as $r-i$.
\item Let $\mathcal{Q}_{i,r}(s_1,\ldots,s_{r-1})$ be the subset of $\mathcal{P}(s_1,\ldots,s_{r-1})$  whose elements $\lambda=(\lambda_0,\ldots,\lambda_{s_1-1})$ satisfy: $\lambda_{s_{j+1}}\geq j+\max\{j-i,0\}$ for all $1\leq j\leq r-1$.
\item Let $\mathcal{S}_{i,r}(s_1,\ldots,s_{r-1})$ be the subset of $\mathcal{Q}_{i,r}(s_1,\ldots,s_{r-1})$ whose elements $\lambda=(\lambda_0,\ldots,\lambda_{s_1-1})$ satisfy: $\lambda_{0},\ldots,\lambda_{s_{r-1}-1}$ have the same parity as $i$.
\item Let $\tilde{\mathcal{S}}_{i,r}(s_1,\ldots,s_{r-1})$ be the subset of $\mathcal{Q}_{i,r}(s_1,\ldots,s_{r-1})$  whose elements $\lambda=(\lambda_0,\ldots,\lambda_{s_1-1})$ satisfy: $\lambda_{0},\ldots,\lambda_{s_{r-1}-1}$ have the same parity as $i-1$.
\end{itemize}
Finally, for $\mathcal{L}\in\{\mathcal{P},\mathcal{Q},\mathcal{R},\tilde{\mathcal{R}},\mathcal{S},\tilde{\mathcal{S}}\}$, define
$$\mathcal{L}_{i,r} := \bigsqcup_{s_1\geq \cdots \geq s_{r-1}\geq 0} \{\mu(s_1,\ldots,s_{r-1})\}\times \mathcal{L}_{i,r}(s_1,\ldots,s_{r-1}),$$
and for all $(\mu,\lambda)\in \mathcal{L}_{i,r}$, define its weight as $|(\mu,\lambda)|=|\mu|+|\lambda|$.
\end{Definition}


Note that $\mathcal{P}_r$ defined in the introduction is equal to $\mathcal{P}_{r-1,r}$. 

The multisum sides of identities~\eqref{eq:AGri},~\eqref{eq:Bri},~\eqref{eq:fij}, and~\eqref{Br3.3form}--\eqref{fijform3} can be written as generating functions for sets expressed in terms of $\mathcal{P}_{i,r}$, $\mathcal{Q}_{i,r}$, $\mathcal{R}_{i,r}$, $\tilde{\mathcal{R}}_{i,r}$, $\mathcal{S}_{i,r}$, and $\tilde{\mathcal{S}}_{i,r}$.
 In particular, note that in the result below, the right-hand sides of~\eqref{AGriter}--\eqref{Bri2ter} correspond to the multisum sides of~\eqref{eq:AGri},~\eqref{eq:Bri}, and~\eqref{eq:fij}, respectively. The right-hand sides of~\eqref{Br3.3ter}--\eqref{fij4} are the multisum sides of~\eqref{Br3.3form}--\eqref{fijform3}.

\begin{Proposition}\label{prop:multisum}
For all integers $r$ such that $r\geq 2$, we have 
\begin{align}
\label{AGriter}
&\sum_{(\mu,\lambda) \in \mathcal{Q}_{i-1,r}} q^{|(\mu,\lambda)|} = \sum_{s_1\geq \dots\geq s_{r-1}\geq 0}\frac{q^{s_1^2+\dots+s_{r-1}^2 +s_{i}+\cdots+s_{r-1}}}{(q)_{s_1-s_2}\dots(q)_{s_{r-2}-s_{r-1}}(q)_{s_{r-1}}}&\text{for  } 1\leq i\leq r,\\
\label{Briter}
&\sum_{(\mu,\lambda) \in \mathcal{S}_{i-1,r}} q^{|(\mu,\lambda)|} = \sum_{s_1\geq \dots\geq s_{r-1}\geq 0}\frac{q^{s_1^2+\dots+s_{r-1}^2 +s_{i}+\cdots+s_{r-1}}}{(q)_{s_1-s_2}\dots(q)_{s_{r-2}-s_{r-1}}(q^2;q^2)_{s_{r-1}}}&\text{for  } 1\leq i\leq r,\\
\label{Bri2ter}
&\sum_{(\mu,\lambda) \in \tilde{\mathcal{S}}_{i-1,r}} q^{|(\mu,\lambda)|} = \sum_{s_1\geq \dots\geq s_{r-1}\geq 0}\frac{q^{s_1^2+\dots+s_{r-1}^2 +s_{i}+\cdots+s_{r-2}+2s_{r-1}}}{(q)_{s_1-s_2}\dots(q)_{s_{r-2}-s_{r-1}}(q^2;q^2)_{s_{r-1}}}&\text{for  } 1\leq i\leq r,\\
\label{Br3.3ter}
&\sum_{(\mu,\lambda) \in \mathcal{P}_{i,r}} q^{|(\mu,\lambda)|} = \sum_{s_1\geq \dots\geq s_{r-1}\geq 0}\frac{q^{s_1^2+\dots+s_{r-1}^2 -s_{1}-\cdots-s_{i}}}{(q)_{s_1-s_2}\dots(q)_{s_{r-2}-s_{r-1}}(q)_{s_{r-1}}}&\text{for  } 0\leq i\leq r-1,\\
\label{Br3.5ter}
&\sum_{(\mu,\lambda) \in \mathcal{R}_{i,r}} q^{|(\mu,\lambda)|} = \sum_{s_1\geq \dots\geq s_{r-1}\geq 0}\frac{q^{s_1^2+\dots+s_{r-1}^2 -s_{1}-\cdots-s_{i}}}{(q)_{s_1-s_2}\dots(q)_{s_{r-2}-s_{r-1}}(q^2;q^2)_{s_{r-1}}}&\text{for  } 0\leq i\leq r-1,\\
\label{fijter}
&\sum_{(\mu,\lambda) \in \tilde{\mathcal{R}}_{i,r}} q^{|(\mu,\lambda)|} = \sum_{s_1\geq \dots\geq s_{r-1}\geq 0}\frac{q^{s_1^2+\dots+s_{r-1}^2 -s_{1}-\cdots-s_{i}+s_{r-1}}}{(q)_{s_1-s_2}\dots(q)_{s_{r-2}-s_{r-1}}(q^2;q^2)_{s_{r-1}}}&\text{for  } 0\leq i\leq r-1,\\
\label{AGPter}
&\sum_{(\mu,\lambda) \in \mathcal{P}_{i,r}\setminus \mathcal{P}_{i-1,r}} q^{|(\mu,\lambda)|} = \sum_{s_1\geq \dots\geq s_{r-1}\geq 0}\frac{q^{s_1^2+\dots+s_{r-1}^2 -s_{1}-\cdots-s_{i}}(1-q^{s_{i}})}{(q)_{s_1-s_2}\dots(q)_{s_{r-2}-s_{r-1}}(q)_{s_{r-1}}}&\text{for  } 1\leq i\leq r-1,\\
\label{BPter}
&\sum_{(\mu,\lambda) \in \mathcal{R}_{i,r}\setminus \mathcal{R}_{i-2,r}} q^{|(\mu,\lambda)|} = \sum_{s_1\geq \dots\geq s_{r-1}\geq 0}\frac{q^{s_1^2+\dots+s_{r-1}^2 -s_{1}-\cdots-s_{i}}(1-q^{s_{i}+s_{i-1}})}{(q)_{s_1-s_2}\dots(q)_{s_{r-2}-s_{r-1}}(q^2;q^2)_{s_{r-1}}}&\text{for  } 2\leq i\leq r-1,\\
\label{isaacter}
&\sum_{(\mu,\lambda) \in \mathcal{R}_{i,r}\setminus \tilde{\mathcal{R}}_{i-1,r}} q^{|(\mu,\lambda)|} = \sum_{s_1\geq \dots\geq s_{r-1}\geq 0}\frac{q^{s_1^2+\dots+s_{r-1}^2 -s_{1}-\cdots-s_{i}}(1-q^{s_{i}+s_{r-1}})}{(q)_{s_1-s_2}\dots(q)_{s_{r-2}-s_{r-1}}(q^2;q^2)_{s_{r-1}}}&\text{for  } 1\leq i\leq r-1,\\
\label{fijter2}
&\sum_{(\mu,\lambda) \in \tilde{\mathcal{R}}_{i,r}\setminus \mathcal{R}_{i-1,r}} q^{|(\mu,\lambda)|} = \sum_{s_1\geq \dots\geq s_{r-1}\geq 0}\frac{q^{s_1^2+\dots+s_{r-1}^2 -s_{1}-\cdots-s_{i}}(q^{s_{r-1}}-q^{s_i})}{(q)_{s_1-s_2}\dots(q)_{s_{r-2}-s_{r-1}}(q^2;q^2)_{s_{r-1}}}&\text{for  } 1\leq i\leq r-1,\\
\label{fij4}
&\sum_{(\mu,\lambda) \in \tilde{\mathcal{R}}_{i,r}\setminus \tilde{\mathcal{R}}_{i-2,r}} q^{|(\mu,\lambda)|} = \sum_{s_1\geq \dots\geq s_{r-1}\geq 0}\frac{q^{s_1^2+\dots+s_{r-1}^2 -s_{1}-\cdots-s_{i}+s_{r-1}}(1-q^{s_{i}+s_{i-1}})}{(q)_{s_1-s_2}\dots(q)_{s_{r-2}-s_{r-1}}(q^2;q^2)_{s_{r-1}}}&\text{for  } 2\leq i\leq r-1.
\end{align}
\end{Proposition}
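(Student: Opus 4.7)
The plan is to fix an $(r-1)$-tuple $(s_1,\ldots,s_{r-1})$ with $s_1\geq\cdots\geq s_{r-1}\geq 0$, and compute the generating function by weight of each of the eleven sets appearing on the left of~\eqref{AGriter}--\eqref{fij4} (sliced by that tuple). Since $|\mu(s_1,\ldots,s_{r-1})|$ is given by~\eqref{eq:weightmu}, and since the blocks $\lambda^{(1)},\ldots,\lambda^{(r-1)}$ can be chosen independently, everything reduces to analyzing each block $\lambda^{(j)}$ separately; summing over $(s_1,\ldots,s_{r-1})$ then yields the multisums.

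The key elementary building block is the following: for any $\ell\geq 0$ and any integer $a\geq 0$, the generating function of non-increasing sequences of $\ell$ non-negative integers whose minimum is at least $a$ equals $q^{a\ell}/(q)_\ell$ (shift each part by $-a$); if one further requires all parts to have the same parity as $a$, the substitution $\lambda_k=a+2m_k$ turns it into $q^{a\ell}/(q^2;q^2)_\ell$. Applied with $\ell=s_j-s_{j+1}$ and with the minimum $m_j$ read off from Definition~\ref{def:LHSsets}, this gives each block generating function. For the parity-constrained sets $\mathcal{R}_{i,r}$, $\tilde{\mathcal{R}}_{i,r}$, $\mathcal{S}_{i-1,r}$, $\tilde{\mathcal{S}}_{i-1,r}$, one must first replace the lower bound on $\lambda^{(r-1)}$ coming from the $\mathcal{P}$- or $\mathcal{Q}$-condition by its next integer of the correct parity whenever the two parities disagree: for example, in $\tilde{\mathcal{R}}_{i,r}$ the bound $r-1-i$ is pushed up to $r-i$ (since parity $r-i$ is required), and in $\tilde{\mathcal{S}}_{i-1,r}$ the bound $2r-i-1$ is pushed up to $2r-i$, whereas no such shift is needed for $\mathcal{R}_{i,r}$ or $\mathcal{S}_{i-1,r}$.

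Multiplying the $r-1$ block generating functions then gives $q^{\sum_j m_j(s_j-s_{j+1})}$ divided by $(q)_{s_1-s_2}\cdots(q)_{s_{r-2}-s_{r-1}}(q)_{s_{r-1}}$, with $(q)_{s_{r-1}}$ replaced by $(q^2;q^2)_{s_{r-1}}$ in the parity cases. A direct telescoping reduces $\sum_j m_j(s_j-s_{j+1})$ to the linear expression in $s_1,\ldots,s_{r-1}$ that, after adding $|\mu|$ from~\eqref{eq:weightmu}, matches the exponent on each right-hand side of~\eqref{AGriter}--\eqref{fijter}. For the five difference identities~\eqref{AGPter}--\eqref{fij4}, I would first verify the inclusions $\mathcal{P}_{i-1,r}\subset\mathcal{P}_{i,r}$, $\mathcal{R}_{i-2,r}\subset\mathcal{R}_{i,r}$, $\tilde{\mathcal{R}}_{i-1,r}\subset\mathcal{R}_{i,r}$, $\mathcal{R}_{i-1,r}\subset\tilde{\mathcal{R}}_{i,r}$, and $\tilde{\mathcal{R}}_{i-2,r}\subset\tilde{\mathcal{R}}_{i,r}$. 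These are routine: in the pairs involving parity, the two parity parameters differ by an even integer (hence match), and in every pair the lower-index set has a strictly stronger $\mathcal{P}$-condition than the upper-index one. Then each difference of generating functions factors as a common prefactor $q^{s_1^2+\cdots+s_{r-1}^2-s_1-\cdots-s_i}$ (with an extra $+s_{r-1}$ in the tilde cases) times $1-q^{s_i}$, $1-q^{s_{i-1}+s_i}$, $1-q^{s_i+s_{r-1}}$, or $q^{s_{r-1}}-q^{s_i}$, as claimed.

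The main obstacle is the bookkeeping for $\lambda^{(r-1)}$ under the parity conditions: for each of the four parity-constrained sets, one must decide whether the $\mathcal{P}$- or $\mathcal{Q}$-bound already has the required parity (no shift) or not (shift by $+1$), and then verify that the resulting telescoped exponent is precisely the one claimed. Once this case analysis is organized into a single table, all block computations reduce to the product-and-telescope step, and the five difference identities follow by subtraction of two such generating functions.
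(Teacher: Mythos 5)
Your proposal is correct and follows essentially the same route as the paper: both compute, for each fixed tuple $(s_1,\ldots,s_{r-1})$, the generating function of each block $\lambda^{(j)}$ via the standard formula $q^{kl}/(q^m;q^m)_k$ for partitions into $k$ parts $\geq l$ congruent to $l \bmod m$ (with the parity-adjusted lower bound on the last block exactly as you describe), then multiply, telescope, add $|\mu(s_1,\ldots,s_{r-1})|$ from~\eqref{eq:weightmu}, and obtain the five difference identities by subtraction. The only difference is presentational: you make explicit the inclusions needed for the subtractions, which the paper leaves implicit.
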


\begin{proof}
Recall that for all integers $k,l,m$ with $k,l\geq 0$ and $m\geq 1$, the generating function for partitions into $k$ non-zero parts $\geq l$ and congruent to $l \mod m$ is given by $q^{kl}/(q^m;q^m)_k$, and that zero parts do not contribute to generating functions. By computing the generating functions for partitions $\lambda^{(j)}=(\lambda_{s_{j}-1},\ldots,\lambda_{s_{j+1}})$ such that $\lambda=(\lambda_0,\ldots,\lambda_{s_1-1})$ belongs to $\mathcal{P}(s_1,\ldots,s_{r-1})$ or its subsets from Definition~\ref{def:LHSsets}, we deduce the following:

\begin{align*}
\sum_{\lambda \in \mathcal{Q}_{i,r}(s_1,\ldots,s_{r-1})} q^{|\lambda|} &= \prod_{j=1}^{i}\frac{q^{j(s_j-s_{j+1})}}{(q)_{s_j-s_{j+1}}} \times\prod_{j=i+1}^{r-1}\frac{q^{(2j-i)(s_{j}-s_{j+1})}}{(q)_{s_{j}-s_{j+1}}}= \frac{q^{s_1+\cdots+s_i+2s_{i+1}+\cdots+2s_{r-1}}}{(q)_{s_1-s_2}\dots(q)_{s_{r-2}-s_{r-1}}(q)_{s_{r-1}}},\\
\sum_{\lambda \in \mathcal{S}_{i,r}(s_1,\ldots,s_{r-1})} q^{|\lambda|} &= \prod_{j=1}^{\min\{i,r-2\}}\frac{q^{j(s_j-s_{j+1})}}{(q)_{s_j-s_{j+1}}} \times\prod_{j=i+1}^{r-2}\frac{q^{(2j-i)(s_{j}-s_{j+1})}}{(q)_{s_{j}-s_{j+1}}}\times \frac{q^{(2r-2-i)s_{r-1}}}{(q^2;q^2)_{s_{r-1}}}\\&= \frac{q^{s_1+\cdots+s_i+2s_{i+1}+\cdots+2s_{r-1}}}{(q)_{s_1-s_2}\dots(q)_{s_{r-2}-s_{r-1}}(q^2;q^2)_{s_{r-1}}},\\
\sum_{\lambda \in \tilde{\mathcal{S}}_{i,r}(s_1,\ldots,s_{r-1})} q^{|\lambda|} &= \prod_{j=1}^{\min\{i,r-2\}}\frac{q^{j(s_j-s_{j+1})}}{(q)_{s_j-s_{j+1}}} \times\prod_{j=i+1}^{r-2}\frac{q^{(2j-i)(s_{j}-s_{j+1})}}{(q)_{s_{j}-s_{j+1}}}\times \frac{q^{(2r-1-i)s_{r-1}}}{(q^2;q^2)_{s_{r-1}}}\\&= \frac{q^{(s_1+\cdots+s_i+2s_{i+1}+\cdots+2s_{r-1})+s_{r-1}}}{(q)_{s_1-s_2}\dots(q)_{s_{r-2}-s_{r-1}}(q^2;q^2)_{s_{r-1}}},\\
\sum_{\lambda \in \mathcal{P}_{i,r}(s_1,\ldots,s_{r-1})} q^{|\lambda|} &= \prod_{j=1}^{i}\frac{1}{(q)_{s_j-s_{j+1}}} \times\prod_{j=i+1}^{r-1}\frac{q^{(j-i)(s_{j}-s_{j+1})}}{(q)_{s_{j}-s_{j+1}}}= \frac{q^{s_{i+1}+\cdots+s_{r-1}}}{(q)_{s_1-s_2}\dots(q)_{s_{r-2}-s_{r-1}}(q)_{s_{r-1}}},\\
\sum_{\lambda \in \mathcal{R}_{i,r}(s_1,\ldots,s_{r-1})} q^{|\lambda|} &= \prod_{j=1}^{\min\{i,r-2\}}\frac{1}{(q)_{s_j-s_{j+1}}} \times\prod_{j=i+1}^{r-2}\frac{q^{(j-i)(s_{j}-s_{j+1})}}{(q)_{s_{j}-s_{j+1}}}\times \frac{q^{(r-1-i)s_{r-1}}}{(q^2;q^2)_{s_{r-1}}}\\&= \frac{q^{s_{i+1}+\cdots+s_{r-1}}}{(q)_{s_1-s_2}\dots(q)_{s_{r-2}-s_{r-1}}(q^2;q^2)_{s_{r-1}}},\\
\sum_{\lambda \in \tilde{\mathcal{R}}_{i,r}(s_1,\ldots,s_{r-1})} q^{|\lambda|} &= \prod_{j=1}^{\min\{i,r-2\}}\frac{1}{(q)_{s_j-s_{j+1}}} \times\prod_{j=i+1}^{r-2}\frac{q^{(j-i)(s_{j}-s_{j+1})}}{(q)_{s_{j}-s_{j+1}}}\times \frac{q^{(r-i)s_{r-1}}}{(q^2;q^2)_{s_{r-1}}}\\&= \frac{q^{(s_{i+1}+\cdots+s_{r-2}+s_{r-1})+s_{r-1}}}{(q)_{s_1-s_2}\dots(q)_{s_{r-2}-s_{r-1}}(q^2;q^2)_{s_{r-1}}}.
\end{align*}
The proposition follows by summing these identities over all integers $s_1\geq \dots \geq s_{r-1}\geq 0$ and using~\eqref{eq:weightmu}.
\end{proof}

The purpose of the next section is to build a weight- and length-preserving bijection between  
\begin{equation*}
\mathcal{P}_{r-1,r}=\mathcal{P}_{r} \text{  and  }\mathcal{A}_{r-1,r}=\mathcal{A}_{r},
\end{equation*}
therefore proving Theorem~\ref{th:bij}. In Section~\ref{sec:cor}, we will then show that, for all $0\leq i\leq r-1$, this bijection also induces a bijection between
\begin{equation*}\label{subbij}
\mathcal{Q}_{i,r} \text{  and  }\mathcal{T}_{i+1,r},\quad
\mathcal{S}_{i,r} \text{  and  }\mathcal{U}_{i+1,r},\quad
\tilde{\mathcal{S}}_{i,r} \text{  and  }\tilde{\mathcal{U}}_{i+1,r},\quad \mathcal{P}_{i,r}\text{  and  }\mathcal{A}_{i,r},\quad
\mathcal{R}_{i,r} \text{  and  }\mathcal{B}_{i,r},\quad
\tilde{\mathcal{R}}_{i,r} \text{  and  }\tilde{\mathcal{B}}_{i,r}.
\end{equation*}

Then, thanks to Propositions~\ref{prop:prod} and~\ref{prop:multisum}, this will prove Corollaries~\ref{coro:GF}--\ref{coro:list}.

\section{Proof of Theorem~\ref{th:bij}}\label{sec:Bijection}

In this section, we give the bijection between the sets $\mathcal{P}_r$ and $\mathcal{A}_r$. It is in the spirit of Warnaar's bijective proof providing the sum-side of the Andrews--Gordon identities~\cite{W}, which implies~\eqref{eq:AGW}. His idea is to see a certain partition $\mu$ as a \textit{minimal} partition in $\mathcal{T}_{i,r}$, and then insert a $(r-1)$-tuple $(\lambda^{(1)},\ldots,\lambda^{(r-1)})$ of partitions in $\mu$. The process is such that the weight of $\mu$ is incremented after each step, $\mu$ stays in $\mathcal{T}_{i,r}$, and there is a total of $|\lambda^{(1)}|+\cdots+|\lambda^{(r-1)}|$ steps.

For $r$ given non-negative integers  $s_1\geq \cdots \geq s_{r-1}\geq s_r=0$, we consider the minimal partition $\mu(s_1,\ldots,s_{r-1})$ of Definition~\ref{def:mu} which, as noted in the introduction, belongs to $\mathcal{A}_{r}$. We then insert (in a sense that will be defined below) in $\mu(s_1,\ldots,s_{r-1})$ a sequence $(\lambda_0,\ldots,\lambda_{s_1-1})\in \mathcal{P}(s_1,\ldots,s_{r-1})$. Our bijection has a total of $s_1$ steps instead of the $\lambda_0+\cdots+\lambda_{s_1-1}$ steps of Warnaar's, as we insert each part $\lambda_j$ at once, whereas Warnaar was doing it in $\lambda_j$ separate steps.   

In Section \ref{sec:r=2}, we start with a very simple example, namely the case $r=2$. In Sections \ref{sec:Lambda} and \ref{sec:Gamma}, we define maps $\Lambda:\mathcal{P}_{r}\to\mathcal{A}_{r}$ and $\Gamma:\mathcal{A}_{r}\to\mathcal{P}_{r}$, respectively, and show that they are well-defined (see Corollaries~\ref{coro:welldefLambda},~\ref{coro:welldefGamma},~\ref{coro:imLambda}, and~\ref{coro:imGamma}) and weight- and length-preserving (see Corollaries~\ref{coro:Lambdapreserving} and~\ref{coro:Gammapreserving}). Then in Sections \ref{sec:GammaoLambda} and \ref{sec:LambdaoGamma}, we show that
$\Gamma\circ \Lambda$  and $\Lambda\circ\Gamma$ are the identity maps on $\mathcal{P}_{r}$ and $\mathcal{A}_{r}$, respectively (see Propositions~\ref{prop:GrondL} and~\ref{prop:LrondG}). This proves Theorem~\ref{th:bij}.

\subsection{The case $r=2$}\label{sec:r=2}

This case is classical, as the Andrews--Gordon identities for $r=2$ correspond to the famous Rogers--Ramanujan identities. The sum sides of their analytic expressions
$$\sum_{n\geq0}\frac{q^{n^2+n}}{(q)_n}\quad\mbox{and}\quad\sum_{n\geq0}\frac{q^{n^2}}{(q)_n}$$
are usually interpreted as a pair made of a partition and a staircase partition with only even (or only odd) parts. Then it is classical to add the partition to the staircase, obtaining a partition in $\mathcal{T}_{1,2}$ (resp. $\mathcal{T}_{2,2}$) if the staircase partition has even (resp. odd) parts. As we consider partitions that may have parts $0$ with frequency $f_0$, we have to slightly adapt the above method. 

By the definition given in~\eqref{defAr}, the set $\mathcal{A}_{2}$ is made of partitions with frequencies $0$ or $1$, and no pair of consecutive frequencies both equal to $1$. Equivalently, by Proposition~\ref{prop:cdf}~(1), these are partitions whose consecutive parts are at distance at least $2$. For example, the partition $(9,6,4,0)=(1,0,0,0,1,0,1,0,0,1,0,\dots)$ belongs to $\mathcal{A}_{2}$, its length is $4$ and its weight is $19$.

When $r=2$, we only have one integer $s_1=:s$ in Definition~\ref{def:mu}, and the partition $\mu(s)$ is the staircase partition with only even parts from $2s-2$ to $0$. For instance, when $s=4$, we get $\mu(4)=(6,4,2,0)=(1,0,1,0,1,0,1,0,0,\dots)$. By Definition~\ref{def:P}, the set $\mathcal{P}_2$ is made of pairs $(\mu(s),\lambda)$, where $\lambda=(\lambda_0,\dots,\lambda_{s-1})$ is a \textit{non-decreasing} sequence of $s$ integers. For example, the pair $(\mu(4),(0,2,2,3))$ belongs to $\mathcal{P}_{2}$, its length is $4$ (the length of $\mu(4)$) and its weight is $19$.

Our map $\Lambda:\mathcal{P}_{2}\to\mathcal{A}_{2}$ starts with an element $(\mu(s),\lambda)\in\mathcal{P}_{2}$, and adds the integer $\lambda_{s-1}$ to the first part $2s-2$ of the staircase, then the integer $\lambda_{s-2}$ to the second part $2s-4$ of the staircase, and so on until adding the integer $\lambda_0$ to the last part $0$ of the staircase. The resulting partition is therefore $(\lambda_{s-1}+2s-2,\lambda_{s-2}+2s-4,\dots,\lambda_0+0)$ which belongs to $\mathcal{A}_{2}$, has length $s$ and weight $|\lambda|+s^2-s$. 

An example is depicted in Figure~\ref{fig:fig1r=2} for $(\mu(4),(0,2,2,3))$.

\begin{figure}[H]
\includegraphics[width=0.7\textwidth]{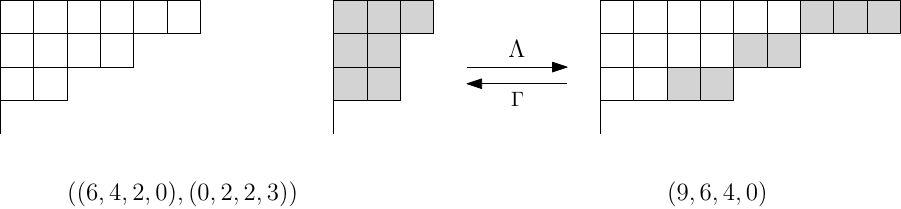}
\caption{The maps in terms of parts.}
\label{fig:fig1r=2}
\end{figure}

To generalize this to $\mathcal{P}_{r}$, we need to describe this map, easily explained in terms of parts, in terms of frequencies. We first have to identify the greatest index with non-zero frequency in $\mu(s)=(f_j)_{j\geq0}$, namely $2s-2$, and shift $f_{2s-2}$ from $1$ to $0$ while $f_{2s-2+\lambda_{s-1}}$ is shifted from $0$ to $1$. We successively do the same shifts for $f_{2s-4}$ using $\lambda_{s-2}$, \dots, $f_0$ using $\lambda_0$.
Figure~\ref{fig:fig2r=2} represents the same map for $(\mu(4),(0,2,2,3))$ as in Figure~\ref{fig:fig1r=2}, but in terms of frequencies, and with notation from Section \ref{sec:Lambda}.

\begin{figure}[H]
\includegraphics[width=0.7\textwidth]{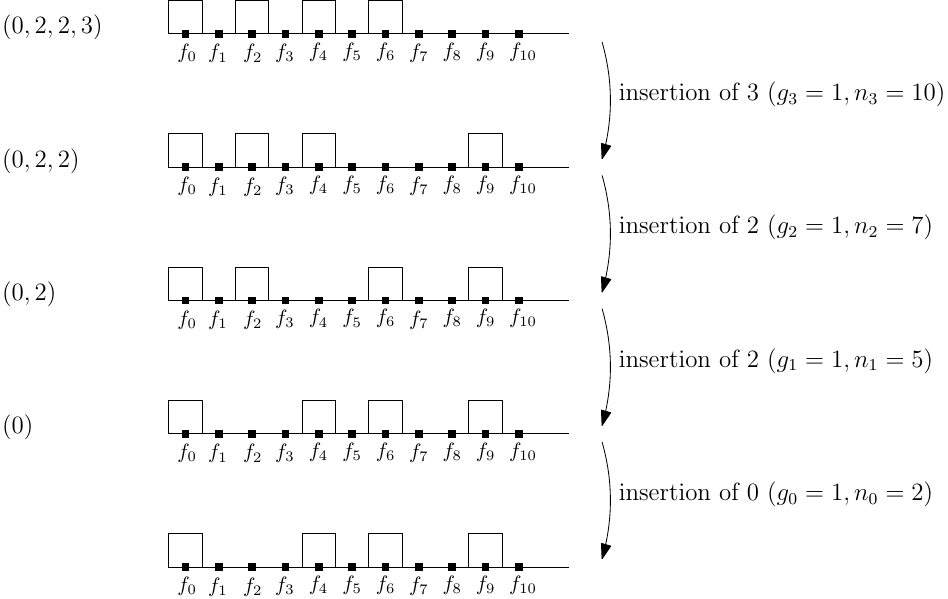}
\caption{The map $\Lambda$ in terms of frequencies.}
\label{fig:fig2r=2}
\end{figure}


Our map $\Gamma:\mathcal{A}_{2}\to\mathcal{P}_{2}$ starts with a partition $\nu=(\nu_1,\dots,\nu_s)\in\mathcal{A}_{2}$, and extracts from $\nu_s$ the part $0$, then from $\nu_{s-1}$ the part $2$, and so on until extraction from $\nu_1$ of the part $2s-2$. The result is a pair made of $\mu(s)$ and a non-decreasing sequence $(\nu_s-0,\nu_{s-1}-2,\dots,\nu_1-(2s-2))$  of length $s$: this pair therefore belongs to $\mathcal{P}_{2}$ and has weight $|\nu|$. See Figure~\ref{fig:fig1r=2} for $\Gamma(9,6,4,0)$.

Again, to generalize this, we need to describe this process in terms of frequencies: we first have to identify the smallest index with non-zero frequency in $\nu=(f_j)_{j\geq0}$, namely $\nu_s$, and shift $f_{\nu_s}$ from $1$ to $0$ while $f_{0}$ is shifted from $0$ to $1$ (or kept unchanged if $\nu_s=0$) and we keep track of the extracted $\nu_s$. We successively do the same shifts for $f_{\nu_{s-1}}$ and $f_2$ (keeping track of the extracted $\nu_{s-1}-2$), \dots, $f_{\nu_1}$ and $f_{2s-2}$ (keeping track of the extracted $\nu_{1}-(2s-2)$).
Figure~\ref{fig:fig3r=2} represents the same map for $(9,6,4,0)$ as in Figure~\ref{fig:fig1r=2}, but in terms of frequencies and with notation from the upcoming Section \ref{sec:Gamma}.
\begin{figure}[H]
\includegraphics[width=0.7\textwidth]{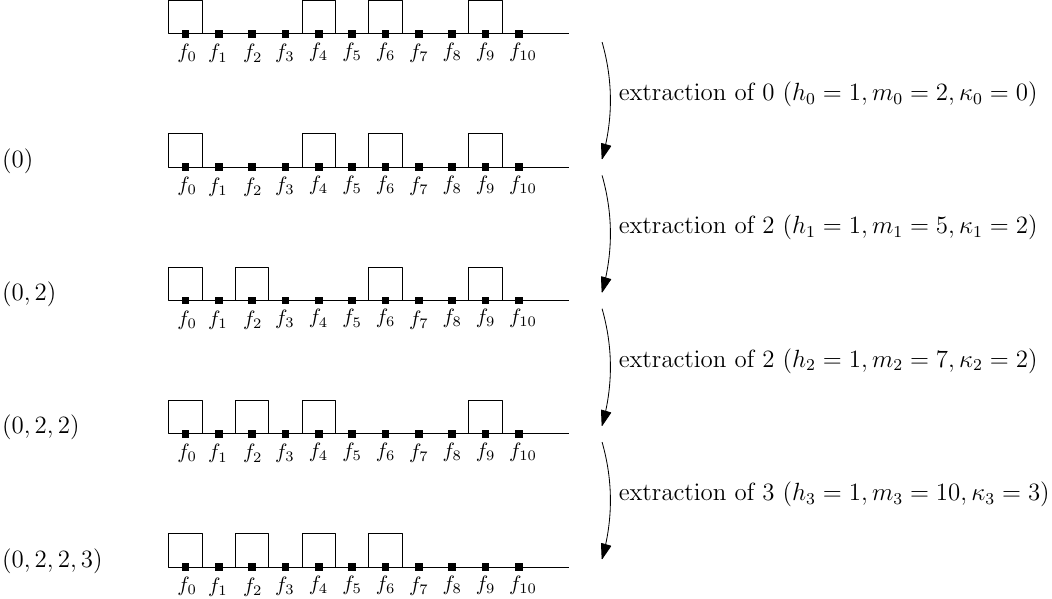}
\caption{The map $\Gamma$ in terms of frequencies.}
\label{fig:fig3r=2}
\end{figure}

\subsection{The map $\Lambda:\mathcal{P}_{r}\to\mathcal{A}_{r}$}\label{sec:Lambda}

Let $s_1\geq \cdots \geq s_{r-1}\geq s_r=0$ be integers, set $s_0=\infty$.

For every non-negative integer $u$, define $g_u $ to be the unique integer in $\{0,\ldots,r-1\}$ such that $s_{g_u+1}\leq u < s_{g_u}$. In other words, $g_u$ is the largest $j$ such that $s_j$ is bigger than $u$. For instance $g_{s_1}=0$, and we have by convention $g_{s_0}=0$. On the example given in Figure~\ref{fig:gu}, we have $g_u=4$, as $s_{5}\leq u < s_{4}$.
\begin{figure}[H]
\includegraphics[width=0.8\textwidth]{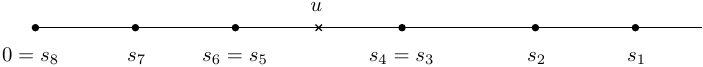}
\caption{An example when $r=8$.}
\label{fig:gu}
\end{figure}

Let $\lambda=(\lambda_0,\ldots,\lambda_{s_1-1}) \in \mathcal{P}(s_1,\ldots,s_{r-1})$. 
The principle of our bijection $\Lambda$ is to insert the parts of $\lambda$ one by one in $\mu(s_1, \ldots, s_{r-1})$, while preserving the length of $\mu(s_1, \ldots, s_{r-1})$ (see the detailed properties of the bijection in Propositions~\ref{prop:welldefnu},~\ref{prop:welldeflambda_s1}, and~\ref{prop:welldeflambda_s1-1}).

Let $\theta^{(s_1)}=\left(\theta_j^{(s_1)}\right)_{j\geq 0}$ be the multiplicity sequence of $\mu(s_1, \ldots, s_{r-1})$, and construct the sequences $\theta^{(u)}=\left(\theta_j^{(u)}\right)_{j\geq 0}$ recursively in decreasing order according to $u \in \{0,\ldots,s_1-1\}$.

Recall from Definition \ref{def:mu} that for all $0\leq j\leq r-1$,
\begin{equation}\label{eq:rappel}
(\theta_{2k}^{(s_1)},\theta_{2k+1}^{(s_1)})=(j,0)\quad\text{ for all }\quad s_{j+1}\leq k<s_j.
\end{equation}

Suppose that the sequence $\theta^{(u+1)}$ is built. Let
\begin{equation}\label{eq:defru}
n_u := \min\left\{t\geq 2u+2: \sum_{j=2u+2}^{t} \left[g_u-\left(\theta_j^{(u+1)}+\theta_{j-1}^{(u+1)}\right)\right] \geq \lambda_u\right\}.
\end{equation}
We will prove in Proposition~\ref{prop:welldefnu} that $n_u$ is well-defined for all $u \in \{0,\ldots,s_1-1\}$.

Now construct the sequence $\theta^{(u)}$ by modifying $\theta^{(u+1)}$ as follows: 
\begin{align}
\theta_j^{(u)}&:=\theta_j^{(u+1)}\text{ if } 0\leq j<2u \text{ or } j\geq n_u&\text{(fixed)},\label{eq:fixed1}\\
\theta_j^{(u)}&:=\theta_{j+2}^{(u+1)}\text{ if } 2u\leq j<n_u-2&\text{(shifted twice to the left)},\label{eq:decalage1}\\
\theta_{n_u-1}^{(u)} &= g_u- \theta_{n_u-2}^{(u)} := \theta_{n_u-1}^{(u+1)}+\lambda_u\nonumber\\
&\hskip 3cm-\sum_{j=2u+2}^{n_u-1} \left[g_u-\left(\theta_j^{(u+1)}+\theta_{j-1}^{(u+1)}\right)\right]&\text{(modified and moved to the right)}\label{eq:def1}.
\end{align}

Figure~\ref{fig:decalage1} gives an illustration of how the multiplicities are modified from step $u+1$ to step $u$.

\begin{figure}[H]
\begin{center}
\begin{tikzpicture}[scale=0.9, every node/.style={scale=0.7}]

\draw [fill=blue] (-4-4,0.5)--(-4-4,0.75)--(-3.75-4,0.75)--(-3.75-4,0.5)--cycle;

\draw [fill=foge] (-4-4,-0.5)--(-4-4,-0.25)--(-3.75-4,-0.25)--(-3.75-4,-0.5)--cycle;

\draw [fill=red!80] (-4-4,-1.5)--(-4-4,-1.25)--(-3.75-4,-1.25)--(-3.75-4,-1.5)--cycle;

\draw (-7.6,0.6) node [right] {fixed when not in $\{2u,\ldots,n_u-1\}$};

\draw (-7.6,-0.4) node [right] {shifted twice to the left when in $\{2u+2,\ldots,n_u-1\}$};

\draw (-7.6,-1.4) node [right] {modified and moved to the right};

\draw (-7.6,-1.8) node [right] {from $(2u,2u+1)$ to $(n_u-2,n_u-1)$};

\draw (6,0.5) node[right] {\begin{Large}Step $u+1$ \end{Large}};
\draw (6,-1.75) node[right] {\begin{Large}Step $u$ \end{Large}};

\draw [thick,->] (0.25,0)--(5,0);

\foreach \x in {0.25,0.5,...,4.5}
\draw (0.125
+\x,0.05)--(0.125+\x,-0.05);

\draw [thick,->] (0.25,-2)--(5,-2);

\foreach \x in {0.25,0.5,...,4.5}
\draw (0.125
+\x,-1.95)--(0.125+\x,-2.05);


\draw [fill=red!80] (0.75,0)--(0.75,0.75)--(1,0.75)--(1,0)--(1.25,0)--cycle;


\draw [fill=red!80] (3,0-2)--(3,0.25-2)--(3.25,0.25-2)--(3.25,0-2)--(3.25,0.5-2)--(3.5,0.5-2)--(3.5,0-2)--cycle;


\foreach \x in {0,1}
\draw [fill=foge, dashed] (1.25-0.5*\x,0-2*\x)--(1.25-0.5*\x,0.5-2*\x)--(1.5-0.5*\x,0.5-2*\x)--(1.5-0.5*\x,0.25-2*\x)--(1.75-0.5*\x,0.25-2*\x)--(1.75-0.5*\x,0.75-2*\x)--(2-0.5*\x,0.75-2*\x)--(2-0.5*\x,0.5-2*\x)--(2.25-0.5*\x,0.5-2*\x)--(2.25-0.5*\x,1-2*\x)--(2.5-0.5*\x,1-2*\x)--(2.5-0.5*\x,0.5-2*\x)--(3-0.5*\x,0.5-2*\x)--(3-0.5*\x,0-2*\x)--(3.25-0.5*\x,0-2*\x)--(3.25-0.5*\x,1.25-2*\x)--(3.5-0.5*\x,1.25-2*\x)--(3.5-0.5*\x,0-2*\x);


\foreach \x in {0,1}
\draw [fill=blue!80, dashed] (3.5,0-2*\x)--(3.5,0.25-2*\x)--(3.75,0.25-2*\x)--(3.75,1.25-2*\x)--(4,1.25-2*\x)--(4,0.5-2*\x)--(4.25,0.5-2*\x)--(4.25,0.75-2*\x)--(4.75,0.75-2*\x)--(4.75,0-2*\x);


\foreach \x in {0,1}
\draw [fill=blue!80, dashed] (0.25,0-2*\x)--(0.25,0.75-2*\x)--(0.5,0.75-2*\x)--(0.5,0-2*\x);

\end{tikzpicture}
\end{center}
\caption{Effects of $\Lambda$ on the multiplicity sequence from step $u+1$ to step $u$.}
\label{fig:decalage1}
\end{figure}
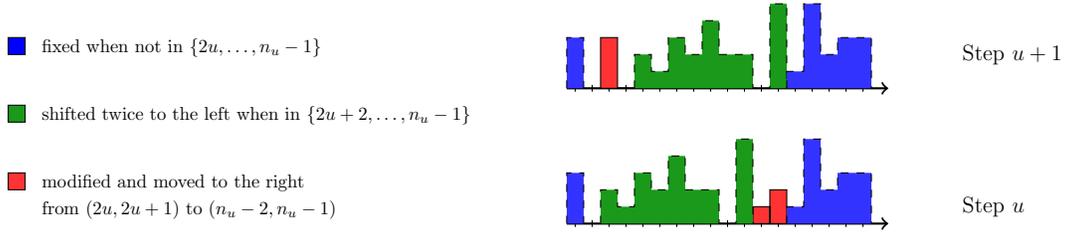

Finally, define $\nu$ to be the partition with frequency sequence $(\theta^{(0)}_j)_{j\geq0}$ (we show in Proposition \ref{prop:welldeflambda_s1} (2) that these frequencies are indeed non-negative), and set
$$\Lambda(\mu(s_1,\ldots,s_{r-1}),\lambda):=\nu.$$

\begin{ex} For $r=4$, $(s_1,s_2,s_3)=(4,2,2)$ and $(\lambda_0,\lambda_1,\lambda_2,\lambda_3)=(5,6,1,3)$, we have
$$\theta^{(4)} = (3,0,3,0,1,0,1,0,0,\ldots).$$
\begin{itemize}
\item At step $3$, we obtain $g_3=1$, and $n_3=10$,
$$\theta^{(3)} = (3,0,3,0,1,0,\textcolor{foge}{0,0},\textcolor{red!80}{0,1},0,\ldots).$$
\item At step $2$, we obtain $g_2=1$, and $n_2=6$,
$$\theta^{(2)} = (3,0,3,0,\textcolor{red!80}{0,1},0,0,0,1,0,\ldots).$$
\item At step $1$, we obtain $g_1=3$, and $n_1=6$,
$$\theta^{(1)} = (3,0,\textcolor{foge}{0,1},\textcolor{red!80}{1,2},0,0,0,1,0,\ldots).$$
\item At step $0$, we obtain $g_0=3$, and $n_0=3$,
$$\theta^{(0)} = (\textcolor{foge}{0},\textcolor{red!80}{1,2},1,1,2,0,0,0,1,0,\ldots).$$
\end{itemize} 
Hence, $\Lambda(\mu(4,2,2),(5,6,1,3))=\nu$ with multiplicity sequence $(0,1,2,1,1,2,0,0,0,1,0,\ldots)$. 
\end{ex}

The following property proves that $\Lambda$ is well-defined.
\begin{Proposition}\label{prop:welldefnu}
For all $u \in \{0,\ldots,s_1-1\}$, $\theta^{(u+1)}_{j}=0$ for $j$ large enough, and $n_u$ is well-defined.
\end{Proposition}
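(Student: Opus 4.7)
The plan is a simultaneous downward induction on $u\in\{s_1-1,s_1-2,\ldots,0\}$, proving at each stage that $\theta^{(u+1)}$ has finite support and that the set of $t$'s over which the minimum defining $n_u$ in~\eqref{eq:defru} is taken is non-empty. The base case $u=s_1-1$ is immediate: by Definition~\ref{def:mu}, the sequence $\theta^{(s_1)}$, which is the multiplicity sequence of $\mu(s_1,\ldots,s_{r-1})$, vanishes from index $2s_1$ onward, so its support is finite.

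The key preliminary observation is that for every $u\in\{0,\ldots,s_1-1\}$ one has $g_u\geq 1$. Indeed, since $u<s_1$, we have $s_1>u$, so the largest index $j$ for which $s_j>u$ is at least $1$, i.e.\ $g_u\geq 1$. This lower bound is what will force the partial sums in~\eqref{eq:defru} to grow unboundedly and eventually exceed the finite integer $\lambda_u\geq 0$.

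For the inductive step, assume that $\theta^{(u+1)}$ has finite support, and pick $N$ so large that $\theta_j^{(u+1)}=0$ for all $j\geq N$. Then for every $j\geq N+1$, the summand $g_u-(\theta_j^{(u+1)}+\theta_{j-1}^{(u+1)})$ equals $g_u\geq 1$. Consequently, the partial sum $\sum_{j=2u+2}^{t}[g_u-(\theta_j^{(u+1)}+\theta_{j-1}^{(u+1)})]$ increases by at least $1$ at each step once $t\geq N+1$, hence tends to $+\infty$ and must eventually reach $\lambda_u$; this proves that $n_u$ is well-defined. Once $n_u$ exists, the prescriptions~\eqref{eq:fixed1}--\eqref{eq:def1} define $\theta^{(u)}$ from $\theta^{(u+1)}$ by only reshuffling values within the finite window $\{2u,\ldots,n_u-1\}$ while leaving entries $\theta_j^{(u+1)}$ untouched for $j\geq n_u$. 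Since $\theta^{(u+1)}$ already vanishes for $j\geq N$, the new sequence $\theta^{(u)}$ vanishes for $j\geq \max(N,n_u)$, which closes the induction.

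The step one needs to be most careful about is not really the existence of $n_u$, which follows transparently once $g_u\geq 1$ and $\theta^{(u+1)}$ has finite support, but rather the bookkeeping needed to propagate finite support across the construction rules~\eqref{eq:fixed1}--\eqref{eq:def1}. In particular, one must verify that rule~\eqref{eq:def1}, which rewrites the two entries in positions $n_u-2$ and $n_u-1$ via a compensating difference, yields non-negative values and does not accidentally create a cascade of nonzero entries; this is guaranteed because the modification only touches a bounded portion of the sequence. Everything else is routine.
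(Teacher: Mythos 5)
Your proof is correct and follows essentially the same route as the paper: a backward induction on $u$ using that $g_u\geq 1$ for $u<s_1$ forces the partial sums in~\eqref{eq:defru} to grow beyond $\lambda_u$ once $t$ passes the finite support of $\theta^{(u+1)}$, after which~\eqref{eq:fixed1} propagates finite support to $\theta^{(u)}$. (Your closing remark about non-negativity is not actually needed for this proposition; that issue is handled separately in Proposition~\ref{prop:welldeflambda_s1}.)
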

\begin{proof}
This follows from a simple backward induction on $u$. By the definition of $\theta^{(s_1)}$, $\theta^{(s_1)}_{j}=0$ for $j \geq 2 s_1$.

Now assume the proposition is true for $u+1$ and show it for $u$. As $\theta^{(u+1)}_{j}=0$ for $j$ large enough, there is some integer $t\geq 2u+2$ such that $\sum_{j=2u+2}^{t} \left[g_u-\left(\theta_j^{(u+1)}+\theta_{j-1}^{(u+1)}\right)\right] \geq \lambda_u$ (indeed for all $u \in \{0,\ldots,s_{1}-1\}$, $g_u>0$), and $n_u$ is well-defined. Finally, using \eqref{eq:fixed1}, we deduce that $\theta^{(u)}_{j}=0$ for $j$ large enough.
\end{proof}

\begin{Corollary}\label{coro:welldefLambda}
The map $\Lambda$ is well-defined.
\end{Corollary}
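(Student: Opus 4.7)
My plan is that this corollary follows essentially immediately from Proposition~\ref{prop:welldefnu}, together with a short non-negativity check. The map $\Lambda$ is defined by a finite backward induction on $u$ from $u=s_1$ down to $u=0$: the initial sequence $\theta^{(s_1)}$ is given explicitly via \eqref{eq:rappel}, and each subsequent $\theta^{(u)}$ is determined from $\theta^{(u+1)}$ by the recursion \eqref{eq:fixed1}--\eqref{eq:def1}. The only place where ambiguity could arise is in the choice of the index $n_u$, which Proposition~\ref{prop:welldefnu} guarantees to exist and hence to be uniquely determined as a minimum.

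Next, I would verify that each $\theta^{(u)}$ is a bona fide frequency sequence, i.e.\ that its entries are non-negative integers. Entries inherited via \eqref{eq:fixed1} or shifted via \eqref{eq:decalage1} are non-negative by the induction hypothesis, so only the value $\theta^{(u)}_{n_u-1}$ set by \eqref{eq:def1} requires attention. The minimality of $n_u$ forces
$$\sum_{j=2u+2}^{n_u-1}\bigl[g_u-(\theta_j^{(u+1)}+\theta_{j-1}^{(u+1)})\bigr]<\lambda_u$$
whenever $n_u>2u+2$ (otherwise $t=n_u-1$ would already satisfy the defining inequality of $n_u$), yielding $\theta^{(u)}_{n_u-1}>\theta^{(u+1)}_{n_u-1}\geq 0$; in the boundary case $n_u=2u+2$ the sum in \eqref{eq:def1} is empty and $\theta^{(u)}_{n_u-1}=\theta^{(u+1)}_{2u+1}+\lambda_u\geq 0$.

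The main obstacle — showing that the tail of $g_u-(\theta^{(u+1)}_j+\theta^{(u+1)}_{j-1})$ ever accumulates enough to reach $\lambda_u$ — has already been overcome in Proposition~\ref{prop:welldefnu} (using that $g_u>0$ for $u\in\{0,\ldots,s_1-1\}$ and that $\theta^{(u+1)}$ has finite support). The residual claims that the resulting $\nu$ actually lies in $\mathcal{A}_r$ and that $\Lambda$ preserves weight and length are not part of well-definedness in the narrow sense, and would be addressed separately in subsequent results of this section.
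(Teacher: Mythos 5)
Your core argument matches the paper exactly: Corollary~\ref{coro:welldefLambda} is stated with no separate proof and is intended as an immediate consequence of Proposition~\ref{prop:welldefnu} --- once each $n_u$ is known to exist, the finite backward recursion \eqref{eq:fixed1}--\eqref{eq:def1} determines every $\theta^{(u)}$, and that is all the paper means by ``well-defined'' here (the non-negativity of the frequencies is explicitly deferred to Proposition~\ref{prop:welldeflambda_s1}~(2), and membership in $\mathcal{A}_r$ to Corollary~\ref{coro:imLambda}). The one substantive issue is with the extra non-negativity check you chose to include: you assert that only $\theta^{(u)}_{n_u-1}$ requires attention, but \eqref{eq:def1} simultaneously resets $\theta^{(u)}_{n_u-2}=g_u-\theta^{(u)}_{n_u-1}$, and its non-negativity is not automatic --- it uses the \emph{other} half of the definition of $n_u$, namely that the sum up to $t=n_u$ is at least $\lambda_u$, giving $\theta^{(u)}_{n_u-2}\geq\theta^{(u+1)}_{n_u}\geq 0$ as in \eqref{eq:petit1}. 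Your argument for $\theta^{(u)}_{n_u-1}$ via minimality of $n_u$ is correct and is exactly the paper's \eqref{eq:petit2}, but as written your check is incomplete; this does not affect the validity of the corollary itself, since the paper handles both entries later, but if you fold non-negativity into well-definedness you must verify both.
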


Now to show that the image of $\Lambda$ is in $\mathcal{A}_{r}$, we first need some additional key properties. 

\begin{Proposition}\label{prop:welldeflambda_s1}
For all $u \in \{0,\ldots,s_{1}\}$, the following holds:
\begin{enumerate}
\item For all $1\leq j<2u$, $\theta^{(u)}_{j} = \theta^{(s_1)}_{j}$.
\item For all $j\geq 0$, $\theta^{(u)}_{j}\geq 0$.
\item For all $j\geq 2u$, $g_u\geq \theta^{(u)}_{j}+\theta^{(u)}_{j+1}.$ 
\end{enumerate}
\end{Proposition}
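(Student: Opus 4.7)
The plan is to prove the three statements simultaneously by backward induction on $u$, running from $u=s_1$ down to $u=0$.

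The base case $u=s_1$ is immediate: (1) is tautological, (2) holds since $\theta^{(s_1)}$ is a multiplicity sequence, and (3) holds because the convention $s_0=\infty$ forces $g_{s_1}=0$ while Definition~\ref{def:mu} yields $\theta^{(s_1)}_j=0$ for all $j\geq 2s_1$.

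For the inductive step, fix $u\in\{0,\ldots,s_1-1\}$ and assume the proposition at $u+1$. A key observation used throughout is that $g_u\geq g_{u+1}$, since as $v$ decreases, the set of indices $j$ with $s_j>v$ only grows. Property (1) at step $u$ is immediate from~\eqref{eq:fixed1} and the inductive hypothesis for (1) at $u+1$. Before addressing (2) and (3), I would first verify that every summand $g_u-(\theta^{(u+1)}_{j-1}+\theta^{(u+1)}_j)$ appearing in the sum defining $n_u$ is non-negative: for $j\geq 2u+3$ this follows from the inductive hypothesis for (3) at index $j-1\geq 2(u+1)$ combined with $g_u\geq g_{u+1}$; for the boundary index $j=2u+2$ one uses the inductive hypothesis for (1) to note that $\theta^{(u+1)}_{2u+1}=\theta^{(s_1)}_{2u+1}=0$ and then (3) at $2(u+1)$.

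With this preparation, properties (2) and (3) at step $u$ reduce to a case analysis on $j$ driven by~\eqref{eq:fixed1}--\eqref{eq:def1}. The ranges $j<2u$ and $j\geq n_u$ follow directly from the inductive hypothesis (using $g_u\geq g_{u+1}$ for (3)), and the shifted interior $2u\leq j\leq n_u-4$ reduces to the inductive hypothesis at shifted indices $j+2\geq 2(u+1)$. The main obstacle is the boundary cluster $j\in\{n_u-3,n_u-2,n_u-1\}$, where the minimality of $n_u$ and the formula~\eqref{eq:def1} must be used simultaneously. The plan is to extract two bounds on $\theta^{(u)}_{n_u-1}$: the strict minimality inequality $\sum_{j=2u+2}^{n_u-1}[g_u-(\theta^{(u+1)}_{j-1}+\theta^{(u+1)}_j)]<\lambda_u$ (when $n_u>2u+2$) gives $\theta^{(u)}_{n_u-1}\geq \theta^{(u+1)}_{n_u-1}\geq 0$, while $\sum_{j=2u+2}^{n_u}[\cdots]\geq \lambda_u$ combined with summand non-negativity yields $\theta^{(u)}_{n_u-1}\leq g_u-\theta^{(u+1)}_{n_u}\leq g_u$. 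Together these give (2) at $n_u-1$ and at $n_u-2$ (via $\theta^{(u)}_{n_u-2}=g_u-\theta^{(u)}_{n_u-1}$) and the three instances of (3) at $n_u-3,n_u-2,n_u-1$ (with (3) at $n_u-3$ obtained by rewriting $\theta^{(u)}_{n_u-3}+\theta^{(u)}_{n_u-2}=\theta^{(u+1)}_{n_u-1}+g_u-\theta^{(u)}_{n_u-1}$). The edge case $n_u=2u+2$ (empty sum) requires a separate but easier verification, and the bookkeeping between the two sub-cases is the main technical point.
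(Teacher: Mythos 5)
Your proposal is correct and follows essentially the same route as the paper: a backward induction on $u$ with a case analysis on $j$ relative to $n_u$, where the two key bounds $\theta^{(u)}_{n_u-1}\geq\theta^{(u+1)}_{n_u-1}$ and $\theta^{(u)}_{n_u-2}\geq\theta^{(u+1)}_{n_u}$ are extracted from the minimality of $n_u$ exactly as in the paper's \eqref{eq:petit2} and \eqref{eq:petit1}. The only cosmetic differences are that you run the three statements as one simultaneous induction instead of three consecutive ones, and you insert a preliminary verification that the summands defining $n_u$ are non-negative, which is true but not actually needed for any of the subsequent estimates.
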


\begin{proof}
\begin{enumerate}
\item This is clear by backward induction on $u$, using \eqref{eq:fixed1}.

\item This is again proved by backward induction on $u$. For $u=s_1$, by definition $\theta^{(s_1)}_{j} \geq 0$ for all $j \geq 0$.
Now assume that $\theta^{(u+1)}_{j}\geq 0$ for all $j \geq 0$ and show that $\theta^{(u)}_{j}\geq 0$ for all $j \geq 0$.
\begin{itemize}
\item If $j\notin \{n_u-1, n_u-2\}$, then it is immediate by \eqref{eq:fixed1} and \eqref{eq:decalage1}.
\item If $j=n_u-1$, then by \eqref{eq:def1},
$$\theta_{n_u-1}^{(u)} = \theta_{n_u-1}^{(u+1)}+\lambda_u-\sum_{j=2u+2}^{n_u-1} \left[g_u-\left(\theta_j^{(u+1)}+\theta_{j-1}^{(u+1)}\right)\right].$$
By definition of $n_u$, the last sum is at most $\lambda_u$, with equality only if $\lambda_u=0$ and $n_u=2u+2$. Hence
\begin{equation}
\label{eq:petit2}
\theta_{n_u-1}^{(u)} \geq \theta_{n_u-1}^{(u+1)} \geq 0.
\end{equation}
\item If $j=n_u-2$, then by \eqref{eq:def1},
\begin{align*}
\theta_{n_u-2}^{(u)} &= g_u -\theta_{n_u-1}^{(u+1)} -\lambda_u +\sum_{j=2u+2}^{n_u-1} \left[g_u-\left(\theta_j^{(u+1)}+\theta_{j-1}^{(u+1)}\right)\right]\\
&= \theta_{n_u}^{(u+1)} -\lambda_u +\sum_{j=2u+2}^{n_u} \left[g_u-\left(\theta_j^{(u+1)}+\theta_{j-1}^{(u+1)}\right)\right].
\end{align*}
By definition of $n_u$, the last sum is at least $\lambda_u$. Hence
\begin{equation}
\label{eq:petit1}
\theta_{n_u-2}^{(u)} \geq \theta_{n_u}^{(u+1)} \geq 0.
\end{equation}
\end{itemize}

\item Let us do a last backward induction on $u$. For $u=s_1$, by the definition of $\theta^{(s_1)}$, $\theta^{(s_1)}_{j}=0$ for $j \geq 2 s_1$. Hence $g_{s_1}=0 \geq  \theta^{(s_1)}_{j}+\theta^{(s_1)}_{j+1}.$

Now assume that $g_{u+1}\geq \theta^{(u+1)}_{j}+\theta^{(u+1)}_{j+1}$ for all $j\geq 2u+2$, and show that $g_u\geq \theta^{(u)}_{j}+\theta^{(u)}_{j+1}$ for all $j\geq 2u$.
\begin{itemize}
\item If $j\geq n_u\geq 2u+2$, then by \eqref{eq:fixed1},
$$\theta^{(u)}_{j}+\theta^{(u)}_{j+1} =\theta^{(u+1)}_{j}+\theta^{(u+1)}_{j+1} \leq g_{u+1} \leq g_u.$$ 
\item If $2u\leq j<n_u-3$, then by~\eqref{eq:decalage1},
$$\theta^{(u)}_{j}+\theta^{(u)}_{j+1} = \theta^{(u+1)}_{j+2}+\theta^{(u+1)}_{j+3} \leq g_{u+1} \leq g_u.$$ 
\item If $j=n_u-1$, then by \eqref{eq:petit1} and \eqref{eq:fixed1},
$$\theta^{(u)}_{n_u-1}+\theta^{(u)}_{n_u} = g_u - \theta_{n_u-2}^{(u)} +\theta^{(u)}_{n_u} \leq g_u -\theta^{(u+1)}_{n_u} +\theta^{(u+1)}_{n_u} \leq g_u.$$ 
\item If $j=n_u-2$, then by \eqref{eq:def1},
$$\theta^{(u)}_{n_u-2}+\theta^{(u)}_{n_u-1} = g_u.$$ 
\item If $j=n_u-3$, then by~\eqref{eq:decalage1} and \eqref{eq:petit2},
$$\theta^{(u)}_{n_u-3}+\theta^{(u)}_{n_u-2} = \theta_{n_u-1}^{(u+1)} +\theta^{(u)}_{n_u-2} \leq \theta_{n_u-1}^{(u)} +\theta^{(u)}_{n_u-2} = g_u.$$ 
\end{itemize}
\end{enumerate}
\end{proof}

\begin{Corollary}\label{coro:imLambda}
The image of $\Lambda$ is in $\mathcal{A}_{r}$.
\end{Corollary}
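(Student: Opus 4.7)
The plan is to specialize Proposition~\ref{prop:welldeflambda_s1} at $u=0$. Recall that $\Lambda(\mu(s_1,\ldots,s_{r-1}),\lambda) = \nu$ is the partition with multiplicity sequence $(\theta^{(0)}_j)_{j\geq 0}$. So by the definition~\eqref{defAr} of $\mathcal{A}_r$, showing $\nu\in\mathcal{A}_r$ amounts to verifying three things: the multiplicities $\theta^{(0)}_j$ are non-negative, the multiplicity $\theta^{(0)}_0$ is at most $r-1$, and $\theta^{(0)}_j + \theta^{(0)}_{j+1}\leq r-1$ for every $j\geq 0$.

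First, I would apply Proposition~\ref{prop:welldeflambda_s1}(2) at $u=0$, which directly yields $\theta^{(0)}_j \geq 0$ for all $j\geq 0$, so that $\nu$ is a legitimate partition.

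Second, I would observe that $g_0 \leq r-1$. Indeed, by definition $g_0$ is the unique integer in $\{0,\ldots,r-1\}$ such that $s_{g_0+1} \leq 0 < s_{g_0}$; since $s_r = 0$, this forces $g_0 \leq r-1$. I would then apply Proposition~\ref{prop:welldeflambda_s1}(3) at $u=0$ to get $\theta^{(0)}_j + \theta^{(0)}_{j+1} \leq g_0 \leq r-1$ for all $j\geq 0$, which handles the pair bound. Taking $j=0$ in this inequality and using non-negativity of $\theta^{(0)}_1$ yields $\theta^{(0)}_0 \leq \theta^{(0)}_0 + \theta^{(0)}_1 \leq r-1$, handling the remaining condition on $\theta^{(0)}_0$.

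Since all three required conditions follow immediately from Proposition~\ref{prop:welldeflambda_s1} and the elementary bound $g_0 \leq r-1$, there is no genuine obstacle at this stage: the substantive work has been carried out in the backward induction that established Proposition~\ref{prop:welldeflambda_s1}. Hence the result follows, and $\Lambda$ indeed maps $\mathcal{P}_r$ into $\mathcal{A}_r$.
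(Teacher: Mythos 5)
Your proof is correct and follows essentially the same route as the paper: apply Proposition~\ref{prop:welldeflambda_s1}~(2) at $u=0$ for non-negativity and Proposition~\ref{prop:welldeflambda_s1}~(3) at $u=0$ together with $g_0\leq r-1$ for the pair bound. Your extra remark deducing $\theta^{(0)}_0\leq r-1$ from the $j=0$ case is a small completeness point the paper leaves implicit, but it changes nothing of substance.
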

\begin{proof}
By Proposition~\ref{prop:welldeflambda_s1}~(2) with $u=0$, the integers $\theta^{(0)}_j$ are non-negative for $j\geq0$, so $\Lambda(\mu(s_1,\ldots,s_{r-1}),\lambda)$ is a partition.
By definition of $g_u$, we know that $0\leq g_0\leq r-1$. Thus by Proposition~\ref{prop:welldeflambda_s1}~(3) with $u=0$, we have $\theta^{(0)}_{j}+\theta^{(0)}_{j+1} \leq r-1$ for all $j\geq 0$. Hence for all $s_1\geq \cdots \geq s_{r-1}$ and $\lambda \in \mathcal{P}(s_1,\ldots,s_{r-1})$, $\Lambda(\mu(s_1,\ldots,s_{r-1}),\lambda)$ belongs to $\mathcal{A}_{r}$.
\end{proof}

\medskip

Finally, we state a few additional properties to show that $\Lambda$ is weight- and length-preserving. 
\begin{Proposition}\label{prop:welldeflambda_s1-1}
For all $u \in \{0,\ldots,s_{1}-1\}$, the following holds:
\begin{enumerate}
\item $(\theta^{(u+1)}_{2u},\theta^{(u+1)}_{2u+1})=(g_u,0)$,
\item the length of $\theta^{(u)}$ equals the length of $\theta^{(u+1)}$, i.e. $\sum_{j\geq 0} \theta^{(u)}_{j} = \sum_{j\geq 0} \theta^{(u+1)}_{j}$,
\item the weight of $\theta^{(u)}$ is $\lambda_u$ more than the weight of $\theta^{(u+1)}$, i.e. $\sum_{j\geq 0} j\cdot \theta^{(u)}_{j} = \lambda_u+\sum_{j\geq 0} j\cdot\theta^{(u+1)}_{j}$.
\end{enumerate}
\end{Proposition}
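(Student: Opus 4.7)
The proof naturally splits into three parts, handled in order, with part (1) feeding the computations for (2) and (3).

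For part (1), the plan is to invoke Proposition~\ref{prop:welldeflambda_s1}~(1) (together with~\eqref{eq:fixed1}, which covers the $j=0$ case as well) at the index $u+1$: this yields $\theta^{(u+1)}_{j}=\theta^{(s_1)}_{j}$ for all $0\leq j<2u+2$, so in particular $\theta^{(u+1)}_{2u}=\theta^{(s_1)}_{2u}$ and $\theta^{(u+1)}_{2u+1}=\theta^{(s_1)}_{2u+1}$. By Definition~\ref{def:mu} and the definition of $g_u$ as the unique index with $s_{g_u+1}\leq u<s_{g_u}$, these two values are $g_u$ and $0$ respectively, settling (1).

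For parts (2) and (3), the strategy is a direct computation based on the three-zone description~\eqref{eq:fixed1},~\eqref{eq:decalage1},~\eqref{eq:def1} of $\theta^{(u)}$. All entries with index $<2u$ or $\geq n_u$ are fixed, so only the window $2u\leq j\leq n_u-1$ contributes to the difference. For part (2), the length difference equals
$$\sum_{j=2u}^{n_u-3}\theta^{(u+1)}_{j+2}+\bigl(\theta^{(u)}_{n_u-2}+\theta^{(u)}_{n_u-1}\bigr)-\sum_{j=2u}^{n_u-1}\theta^{(u+1)}_{j}.$$
Re-indexing the first sum and using $\theta^{(u)}_{n_u-2}+\theta^{(u)}_{n_u-1}=g_u$ (which is exactly the definition in~\eqref{eq:def1}), this collapses to $g_u-\theta^{(u+1)}_{2u}-\theta^{(u+1)}_{2u+1}$, which vanishes by (1).

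Part (3) is the most computational. The weight difference over the window, after shifting the index $k=j+2$ in the shifted part, becomes
$$-2\sum_{k=2u+2}^{n_u-1}\theta^{(u+1)}_{k}-2u\,\theta^{(u+1)}_{2u}-(2u+1)\theta^{(u+1)}_{2u+1}+(n_u-2)\theta^{(u)}_{n_u-2}+(n_u-1)\theta^{(u)}_{n_u-1}.$$
Substituting part (1) and $\theta^{(u)}_{n_u-2}=g_u-\theta^{(u)}_{n_u-1}$ reduces this to $(n_u-2u-2)g_u-2\sum_{k=2u+2}^{n_u-1}\theta^{(u+1)}_k+\theta^{(u)}_{n_u-1}$. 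The one remaining step is to re-expand $\theta^{(u)}_{n_u-1}$ using~\eqref{eq:def1}: the sum $\sum_{j=2u+2}^{n_u-1}[g_u-(\theta^{(u+1)}_j+\theta^{(u+1)}_{j-1})]$ decomposes as $(n_u-2u-2)g_u-2\sum_{j=2u+2}^{n_u-1}\theta^{(u+1)}_j+\theta^{(u+1)}_{n_u-1}-\theta^{(u+1)}_{2u+1}$ (the boundary terms of a near-telescoping), and inserting this everything cancels except $\lambda_u$.

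The only mild obstacle is the bookkeeping in the weight calculation: the pair sum $\theta^{(u+1)}_j+\theta^{(u+1)}_{j-1}$ double-counts interior indices, so one must carefully isolate the boundary terms at $2u+1$ and $n_u-1$. Once part (1) supplies $\theta^{(u+1)}_{2u+1}=0$, these boundary corrections pair exactly with the $-2\sum\theta^{(u+1)}_k$ coming from the index shift, and the calculation terminates at $\lambda_u$ as claimed.
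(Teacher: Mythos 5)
Your proposal is correct and follows essentially the same route as the paper: part (1) via Proposition~\ref{prop:welldeflambda_s1}~(1) and \eqref{eq:rappel}, and parts (2) and (3) by splitting the index range into the fixed, shifted, and modified zones of \eqref{eq:fixed1}--\eqref{eq:def1} and cancelling via part (1). Your explicit handling of the $j=0$ boundary case in part (1) is a small extra care the paper leaves implicit, but it does not change the argument.
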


\begin{proof}
\begin{enumerate}
\item By Proposition \ref{prop:welldeflambda_s1} (1), for all $1\leq j<2u+2$, $\theta^{(u+1)}_{j} = \theta^{(s_1)}_{j}$. In particular, by \eqref{eq:rappel},
$$(\theta^{(u+1)}_{2u},\theta^{(u+1)}_{2u+1})=(\theta^{(s_1)}_{2u},\theta^{(s_1)}_{2u+1})=(g_u,0).$$

\item We have 
\begin{align*}
\sum_{j\geq 0} \theta_{j}^{(u)} &=  \sum_{j=0}^{2u-1} \theta_{j}^{(u)} + \sum_{j= 2u}^{n_u-3} \theta_{j}^{(u)} + g_u +  \sum_{j\geq n_u} \theta_{j}^{(u)}&\text{(by ~\eqref{eq:def1})}\\
&=  \sum_{j=0}^{2u-1} \theta_{j}^{(u+1)} + \sum_{j= 2u+2}^{n_u-1} \theta_{j}^{(u+1)} + g_u +  \sum_{j\geq n_u} \theta_{j}^{(u+1)}&\text{(by ~\eqref{eq:fixed1} and ~\eqref{eq:decalage1})}\\
&=\sum_{j\geq 0} \theta_{j}^{(u+1)}&\text{(by Proposition ~\ref{prop:welldeflambda_s1-1} (1))}.
\end{align*}

\item  We have
\begin{align*}
\sum_{j\geq 0} j\cdot \theta_{j}^{(u)} &=  \sum_{j=0}^{2u-1} j\cdot \theta_{j}^{(u)} + \sum_{j= 2u}^{n_u-3} j\cdot\theta_{j}^{(u)} + g_u\cdot (n_u-2)+ \theta_{n_u-1}^{(u)}+  \sum_{j\geq n_u} j\cdot\theta_{j}^{(u)} &\text{(by ~\eqref{eq:def1})}\\
&=  \sum_{j=0}^{2u-1} j\cdot \theta_{j}^{(u+1)} + \sum_{j= 2u+2}^{n_u-1} (j-2)\cdot\theta_{j}^{(u+1)} +  g_u\cdot(n_u-2)+ \theta_{n_u-1}^{(u)}\\
&\hskip 8cm +  \sum_{j\geq n_u} j\cdot \theta_{j}^{(u+1)}&\text{(by ~\eqref{eq:fixed1} and ~\eqref{eq:decalage1})}\\
&=  \sum_{j=0}^{2u-1} j\cdot \theta_{j}^{(u+1)} + \sum_{j= 2u+2}^{n_u-1} j\cdot \theta_{j}^{(u+1)} +  2u\cdot g_u+ \lambda_u+ \theta_{2u+1}^{(u+1)}\\
&\hskip 8cm +  \sum_{j\geq n_u} j\cdot \theta_{j}^{(u+1)}&\text{(by ~\eqref{eq:def1})}\\
&= \lambda_u +\sum_{j\geq 0} j\cdot \theta_{j}^{(u+1)} &\text{(by Proposition ~\ref{prop:welldeflambda_s1-1} (1))}.
\end{align*}
\end{enumerate}
\end{proof}

\begin{Corollary}\label{coro:Lambdapreserving}
$\Lambda$ preserves the weight and the length, i.e., for all $s_1\geq \cdots \geq s_{r-1}$ and $\lambda \in \mathcal{P}(s_1,\ldots,s_{r-1})$,
$$|\Lambda(\mu(s_1,\ldots,s_{r-1}),\lambda)|=|\lambda|+|\mu(s_1,\ldots,s_{r-1})|=|(\mu(s_1,\ldots,s_{r-1}),\lambda)|,$$
and the length of $\Lambda(\mu(s_1,\ldots,s_{r-1}),\lambda)$ is equal to the length of $\mu(s_1,\ldots,s_{r-1})$.
\end{Corollary}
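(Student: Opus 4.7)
The plan is to deduce the corollary by a simple telescoping argument, using Proposition~\ref{prop:welldeflambda_s1-1}~(2) and~(3) which record the effect of a single step of the construction on the length and on the weight, respectively.

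First I would observe that, by construction, $\nu = \Lambda(\mu(s_1,\ldots,s_{r-1}),\lambda)$ has multiplicity sequence $(\theta^{(0)}_j)_{j\geq 0}$, while $\mu(s_1,\ldots,s_{r-1})$ has multiplicity sequence $(\theta^{(s_1)}_j)_{j\geq 0}$. So it suffices to compare the length and the weight of $\theta^{(0)}$ and $\theta^{(s_1)}$. By Proposition~\ref{prop:welldeflambda_s1-1}~(2) applied to each $u\in\{0,\ldots,s_1-1\}$, the quantity $\sum_{j\geq 0}\theta^{(u)}_j$ is invariant under $u \mapsto u+1$, so a trivial downward induction on $u$ from $u=s_1$ to $u=0$ gives
\[
\sum_{j\geq 0}\theta^{(0)}_j = \sum_{j\geq 0}\theta^{(s_1)}_j = |\mu(s_1,\ldots,s_{r-1})|_{\text{length}} = s_1+\cdots+s_{r-1},
\]
which is the length of $\mu(s_1,\ldots,s_{r-1})$ and therefore the desired length identity.

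Next, I would apply Proposition~\ref{prop:welldeflambda_s1-1}~(3) at each step, which gives $\sum_{j\geq 0}j\,\theta^{(u)}_j - \sum_{j\geq 0}j\,\theta^{(u+1)}_j = \lambda_u$. Summing this telescoping identity over $u=0,\ldots,s_1-1$ yields
\[
\sum_{j\geq 0}j\,\theta^{(0)}_j - \sum_{j\geq 0}j\,\theta^{(s_1)}_j = \sum_{u=0}^{s_1-1}\lambda_u = |\lambda|,
\]
so $|\nu| = |\mu(s_1,\ldots,s_{r-1})| + |\lambda| = |(\mu(s_1,\ldots,s_{r-1}),\lambda)|$, as required.

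There is no real obstacle here: the proof is essentially a one-line telescoping, and all the substantive content has already been established in Proposition~\ref{prop:welldeflambda_s1-1}. The only point to be slightly careful about is that the sums $\sum_{j\geq 0}\theta^{(u)}_j$ and $\sum_{j\geq 0}j\,\theta^{(u)}_j$ are indeed finite at every step, which is guaranteed by Proposition~\ref{prop:welldefnu}. Combining the two displays concludes the proof of Corollary~\ref{coro:Lambdapreserving}.
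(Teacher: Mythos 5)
Your proposal is correct and matches the paper's proof, which simply cites Proposition~\ref{prop:welldeflambda_s1-1}~(2) and~(3); you merely spell out the telescoping over $u=0,\ldots,s_1-1$ that the paper leaves implicit. No issues.
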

\begin{proof}
This is a direct consequence of Proposition~\ref{prop:welldeflambda_s1-1}~(2) and (3).
\end{proof}

\subsection{The map $\Gamma:\mathcal{A}_{r}\to\mathcal{P}_{r}$}\label{sec:Gamma}

Let $\nu\in \mathcal{A}_{r}$, and let $\eta^{(0)}=\left(\eta_j^{(0)}\right)_{j\geq 0}$ be its multiplicity sequence. The idea here is to retrieve from $\nu$ a pair $(\mu,\kappa)$ in $\mathcal{P}_r$. We construct the sequences $\eta^{(u)}=\left(\eta_j^{(u)}\right)_{j\geq 0}$ recursively in increasing order for $u$ as follows. Suppose that the sequence $\eta^{(u)}$ has been constructed.

Define $h_u := \max\left\{\eta_j^{(u)}+\eta^{(u)}_{j+1}: j\geq 2u\right\}$, and
\begin{equation}\label{eq:defsu}
m_u:= \min\left\{j\geq 2u+2: \eta_{j-2}^{(u)}+\eta^{(u)}_{j-1}=h_u\right\}.
\end{equation}
We will prove in Proposition~\ref{prop:welldefmu} that $h_u$ and $m_u$ are well-defined for all $u \geq 0$.

Now construct $\eta^{(u+1)}$ as follows:
\begin{align}
\eta_j^{(u+1)}&=\eta_j^{(u)}\text{ if } 0\leq j<2u \text{ or } j\geq m_u &\text{(fixed)},\label{eq:fixed2}\\
\eta_j^{(u+1)}&=\eta_{j-2}^{(u)}\text{ if } 2u+2\leq j<m_u &\text{(shifted twice to the right)}, \label{eq:decalage2}\\
(\eta_{2u}^{(u+1)},\eta_{2u+1}^{(u+1)})&= (h_u,0) &\text{(modified and moved to the left)}.\label{eq:def2}
\end{align}

Moreover we define
\begin{equation}
\kappa_u :=  h_u-\eta_{2u}^{(u)}+ \sum_{j=2u}^{m_u-3}\left[h_u-\left(\eta_j^{(u)}+\eta_{j+1}^{(u)}\right)\right]. \label{eq:defkappa}
\end{equation}

Figure~\ref{fig:decalage2} gives an illustration of how the multiplicities are modified from step $u$ to step $u+1$.

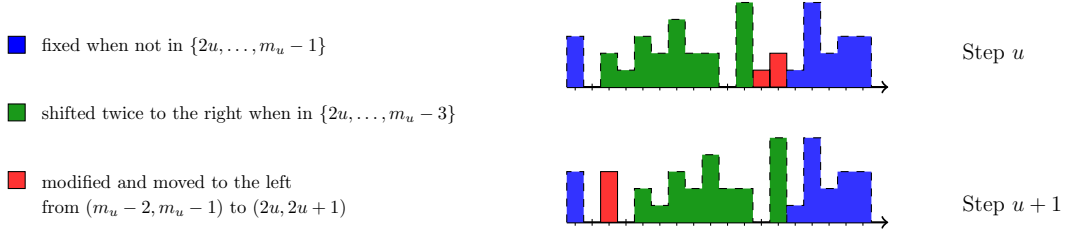
\begin{figure}[H]
\begin{center}
\begin{tikzpicture}[scale=0.9, every node/.style={scale=0.7}]

\draw [fill=blue] (-4-4,0.5)--(-4-4,0.75)--(-3.75-4,0.75)--(-3.75-4,0.5)--cycle;

\draw [fill=foge] (-4-4,-0.5)--(-4-4,-0.25)--(-3.75-4,-0.25)--(-3.75-4,-0.5)--cycle;

\draw [fill=red!80] (-4-4,-1.5)--(-4-4,-1.25)--(-3.75-4,-1.25)--(-3.75-4,-1.5)--cycle;

\draw (-7.6,0.6) node [right] {fixed when not in $\{2u,\ldots,m_u-1\}$};

\draw (-7.6,-0.4) node [right] {shifted twice to the right when in $\{2u,\ldots,m_u-3\}$};

\draw (-7.6,-1.4) node [right] {modified and moved to the left};

\draw (-7.6,-1.8) node [right] {from $(m_u-2,m_u-1)$ to $(2u,2u+1)$};

\draw (6,0.5) node[right] {\begin{Large}Step $u$ \end{Large}};
\draw (6,-1.75) node[right] {\begin{Large}Step $u+1$ \end{Large}};

\draw [thick,->] (0.25,0)--(5,0);

\foreach \x in {0.25,0.5,...,4.5}
\draw (0.125
+\x,0.05)--(0.125+\x,-0.05);

\draw [thick,->] (0.25,-2)--(5,-2);

\foreach \x in {0.25,0.5,...,4.5}
\draw (0.125
+\x,-1.95)--(0.125+\x,-2.05);


\draw [fill=red!80] (0.75,0-2)--(0.75,0.75-2)--(1,0.75-2)--(1,0-2)--cycle;


\draw [fill=red!80] (3,0)--(3,0.25)--(3.25,0.25)--(3.25,0)--(3.25,0.5)--(3.5,0.5)--(3.5,0)--cycle;


\foreach \x in {0,1}
\draw [fill=foge, dashed] (1.25-0.5*\x,0-2+2*\x)--(1.25-0.5*\x,0.5-2+2*\x)--(1.5-0.5*\x,0.5-2+2*\x)--(1.5-0.5*\x,0.25-2+2*\x)--(1.75-0.5*\x,0.25-2+2*\x)--(1.75-0.5*\x,0.75-2+2*\x)--(2-0.5*\x,0.75-2+2*\x)--(2-0.5*\x,0.5-2+2*\x)--(2.25-0.5*\x,0.5-2+2*\x)--(2.25-0.5*\x,1-2+2*\x)--(2.5-0.5*\x,1-2+2*\x)--(2.5-0.5*\x,0.5-2+2*\x)--(3-0.5*\x,0.5-2+2*\x)--(3-0.5*\x,0-2+2*\x)--(3.25-0.5*\x,0-2+2*\x)--(3.25-0.5*\x,1.25-2+2*\x)--(3.5-0.5*\x,1.25-2+2*\x)--(3.5-0.5*\x,0-2+2*\x);


\foreach \x in {0,1}
\draw [fill=blue!80, dashed] (3.5,0-2*\x)--(3.5,0.25-2*\x)--(3.75,0.25-2*\x)--(3.75,1.25-2*\x)--(4,1.25-2*\x)--(4,0.5-2*\x)--(4.25,0.5-2*\x)--(4.25,0.75-2*\x)--(4.75,0.75-2*\x)--(4.75,0-2*\x);


\foreach \x in {0,1}
\draw [fill=blue!80, dashed] (0.25,0-2*\x)--(0.25,0.75-2*\x)--(0.5,0.75-2*\x)--(0.5,0-2*\x);

\end{tikzpicture}
\end{center}
\caption{Effects of $\Gamma$ on the multiplicity sequence from step $u$ to step $u+1$.}
\label{fig:decalage2}
\end{figure}

\begin{Remark}\label{rem:eta_positive}
For all non-negative integers $u,j$, we have $\eta^{(u)}_j \geq 0$. This follows directly from the fact that $\eta^{(0)}$ is the frequency sequence of a partition, and inductively from equations \eqref{eq:fixed2}--\eqref{eq:def2}.
\end{Remark}

\begin{Remark}
Note that if $h_u=0$ for some $u$, then by Remark~\ref{rem:eta_positive} and~\eqref{eq:defsu}--\eqref{eq:def2}, $\eta^{(v)}=\eta^{(u)}$ for all $v \geq u$. We will show in Corollary \ref{coro:welldefGamma} that such a $u$ always exist.
\end{Remark}

Let $U$ be the smallest $u$ such that $h_u=0$. We stop the recursive process of building $\eta^{(u)}$ at $u=U$, and define the image of $\nu$ by $\Gamma$ as follows.
Let $\mu$ be the partition with multiplicity sequence $\left(\eta_j^{(U)}\right)_{j\geq 0}$, let $\kappa$ be the sequence $(\kappa_0,\ldots,\kappa_{U-1})$ (or the empty sequence if $U=0$), and set
$$\Gamma(\nu):=(\mu,\kappa).$$

\begin{ex} For $r=4$, let $\nu$ be the partition with multiplicity sequence
$$\eta^{(0)} = (0,1,2,1,1,2,0,0,0,1,0,\ldots).$$
\begin{itemize}
\item At step $0$, we obtain $h_0=3$, $m_0=3$,
$$\eta^{(1)} = (\textcolor{red!80}{3,0},\textcolor{foge}{0},1,1,2,0,0,0,1,0,\ldots),$$
and $\kappa_0=5$.
\item At step $1$, we obtain $h_1=3$, $m_1=6$,
$$\eta^{(2)} = (3,0,\textcolor{red!80}{3,0},\textcolor{foge}{0,1},0,0,0,1,0,\ldots),$$
and $\kappa_1=6$.
\item At step $2$, we obtain $h_2=1$, $m_2=6$,
$$\eta^{(3)} = (3,0,3,0,\textcolor{red!80}{1,0},0,0,0,1,0,\ldots),$$
and $\kappa_2=1$.
\item At step $3$, we obtain $h_3=1$, $m_3=10$,
$$\eta^{(4)} = (3,0,3,0,1,0,\textcolor{red!80}{1,0},\textcolor{foge}{0,0},0,\ldots),$$
and $\kappa_3=3$.
\item At step $4$, we obtain $h_4=0$, so we stop the process.
\end{itemize} 
Therefore, $\Gamma(\nu) = (\mu(4,2,2),(5,6,1,3))$. 
\end{ex}

First check that $\Gamma$ is well-defined, using the following propositions. 

\begin{Proposition}\label{prop:welldefmu}
Let $L$ be the largest part of the partition $\nu$. Then for all non-negative integers $u$, the quantities $h_u$ and $m_u$ are well-defined, and for all $j \geq L +1$, $\eta^{(u)}_{j}=0$.
\end{Proposition}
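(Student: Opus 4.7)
The plan is to prove the proposition by induction on $u$, establishing simultaneously the vanishing $\eta^{(u)}_j = 0$ for $j \geq L+1$ and the existence of $h_u$ and $m_u$. Once the vanishing is known at level $u$, the well-definedness of $h_u$ is immediate because the set in its definition has only finitely many nonzero terms and its maximum is attained; and $m_u$ is then the minimum of a nonempty set of integers (nonempty because, if $h_u>0$, the max is attained at some $j_0\geq 2u$, hence $m_u\leq j_0+2$, while if $h_u=0$ all multiplicities with index $\geq 2u$ vanish and $m_u=2u+2$).

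The base case $u=0$ is immediate from the definition of $L$ as the largest part of $\nu$. For the inductive step, assume the statement holds up to step $u$, so that $h_u$ and $m_u$ exist; I need to propagate the vanishing to $\eta^{(u+1)}$. By~\eqref{eq:fixed2}, values with $j\geq m_u$ are inherited from $\eta^{(u)}$ and are zero for $j\geq L+1$ by induction, so the crux is to control $m_u$ in order to handle the shifted range~\eqref{eq:decalage2}. A first bound is $m_u\leq L+2$: in the case $h_u>0$, the equality $\eta^{(u)}_{m_u-2}+\eta^{(u)}_{m_u-1}=h_u>0$ forces one of these two multiplicities to be positive, hence $m_u-2\leq L$ by induction, while in the case $h_u=0$ one has $m_u=2u+2$ and the shifted range is empty.

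The main obstacle is the borderline case $m_u=L+2$ with $h_u>0$, where~\eqref{eq:decalage2} would yield $\eta^{(u+1)}_{L+1}=\eta^{(u)}_{L-1}$, and a priori this might not be zero. I plan to rule out $L\geq 2u+1$ in this case as follows. Combining $m_u=L+2$ with the induction hypothesis $\eta^{(u)}_{L+1}=0$ yields $\eta^{(u)}_L=h_u$. The minimality of $m_u$ then implies $\eta^{(u)}_k+\eta^{(u)}_{k+1}<h_u$ for every $k$ with $2u\leq k<L$; specializing to $k=L-1$ (which requires $L\geq 2u+1$) gives $\eta^{(u)}_{L-1}+h_u<h_u$, contradicting $\eta^{(u)}_{L-1}\geq 0$ from Remark~\ref{rem:eta_positive}. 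Hence $m_u=L+2$ forces $L=2u$, in which case the shifted range $\{2u+2,\ldots,m_u-1\}$ is empty and the problematic assignment never occurs. This closes the induction and yields the proposition.
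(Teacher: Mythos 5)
Your proof is correct and follows essentially the same induction as the paper's: base case from the definition of $L$, then propagation of the vanishing through \eqref{eq:fixed2}--\eqref{eq:def2} after bounding $m_u$. In fact you make explicit a step the paper only asserts — namely that apart from the degenerate case $m_u=2u+2=L+2$ one always has $m_u\leq L+1$ — by using the minimality of $m_u$ at $k=L-1$ together with $\eta^{(u)}_L=h_u$ to rule out $m_u=L+2$ when $L\geq 2u+1$.
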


\begin{proof}
This follows by induction on $u$. As $(\eta^{(0)}_{j})_{j\geq0}$ is the multiplicity sequence of the partition $\nu$, by definition of $L$, for all $j \geq L +1$, $\eta^{(0)}_{j}=0$. Hence $h_0$ and $m_0$ are well-defined.

Now assume the proposition is true for $u\geq0$ and show it for $u+1$. We distinguish two cases:
\begin{itemize}
\item If $m_u=2u+2=L+2$, then by \eqref{eq:def2}, $\eta_{2u+1}^{(u+1)}=\eta_{L+1}^{(u+1)}=0$, and by \eqref{eq:fixed2}, for all $j \geq L +2$, $\eta_{j}^{(u+1)}=0$.
\item Otherwise, by \eqref{eq:defsu}, $m_u \in \{2u+2, \dots , L +1\}$. Then by \eqref{eq:fixed2}, for all $j \geq L +1$, $\eta_{j}^{(u+1)}=0$.
\end{itemize}
Hence for all $j \geq L +1$, $\eta^{(u+1)}_{j}=0$. Thus $h_{u+1}$, and therefore $m_{u+1}$, is well-defined.
\end{proof}

\begin{Corollary}\label{coro:welldefGamma}
The map $\Gamma$ is well-defined. In particular, $h_u=0$ for $u$ large enough and $U$ is well-defined.
\end{Corollary}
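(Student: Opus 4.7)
The plan is to derive the corollary essentially as an immediate consequence of Proposition~\ref{prop:welldefmu}, which is the substantive content. By that proposition, if $L$ denotes the largest part of $\nu$ (with $L=0$ if $\nu$ is the empty partition), then at every step $u\geq 0$ for which the recursion has been defined, one has $\eta^{(u)}_j=0$ for all $j\geq L+1$. So the supports of the sequences $\eta^{(u)}$ remain uniformly bounded throughout the construction.

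Next, I would observe that for any $u$ with $2u\geq L+1$, the definition
\[
h_u=\max\!\left\{\eta^{(u)}_j+\eta^{(u)}_{j+1}:j\geq 2u\right\}
\]
involves only indices $j\geq 2u\geq L+1$, on which $\eta^{(u)}_j=0$ by the previous paragraph. Hence $h_u=0$ for every such $u$. In particular, taking $u_0:=\lceil (L+1)/2\rceil$, we have $h_{u_0}=0$, which shows that the set $\{u\geq 0:h_u=0\}$ is nonempty.

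I would then conclude: since this set is a nonempty subset of $\mathbb{N}$, it admits a minimum, and this minimum is by definition $U$. Thus $U$ is well-defined, the recursion terminates, the sequences $(\eta^{(u)})_{0\leq u\leq U}$ and $(\kappa_u)_{0\leq u\leq U-1}$ are finite, and the output $(\mu,\kappa)$ of $\Gamma(\nu)$ is a bona fide pair. The membership $(\mu,\kappa)\in\mathcal{P}_r$ will be verified separately (in later lemmas controlling the $s_j$-parameters of $\mu$ and the monotonicity of the blocks of $\kappa$), but the mere well-definedness asked for in this corollary is exactly the above finiteness statement. There is no real obstacle here: the proof is a one-line deduction from Proposition~\ref{prop:welldefmu} together with the trivial remark that a max over the empty part of the support of $\eta^{(u)}$ equals zero.
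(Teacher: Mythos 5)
Your proposal is correct and follows essentially the same route as the paper: both deduce from Proposition~\ref{prop:welldefmu} that $\eta^{(u)}_j=0$ for $j\geq L+1$, hence $h_u=0$ once $2u\geq L+1$, so $U$ exists as the minimum of a nonempty subset of $\N$.
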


\begin{proof}
Thanks to Proposition~\ref{prop:welldefmu}, $h_u$ and $m_u$ are well-defined. It remains to show that there exists $u$ such that $h_u=0$, so that $U$ is well-defined and the process stops. From Proposition~\ref{prop:welldefmu}, for all $j \geq L +1$, $\eta^{(u)}_{j}=0$. Thus for all $u \geq (L +1)/2$, $h_u=0$.
\end{proof}

Now to show that the image of $\Gamma$ is in $\mathcal{P}_{r}$, we need some additional properties. 

\begin{Proposition}\label{prop:image}
For all $u\in\{0,\ldots,U-1\}$, the following holds:
\begin{enumerate}
\item We have $0\leq h_{u+1}\leq h_u\leq r-1$.
\item We have $\kappa_u\geq 0$.
\item If $h_u=h_{u+1}$, then $m_{u+1}\geq m_u+2$ and $\kappa_{u+1}\geq \kappa_u$.
\end{enumerate}
\end{Proposition}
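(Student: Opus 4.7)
The plan is to prove the three statements jointly by induction on $u$; the main engine is a case analysis of $\eta^{(u+1)}$ in terms of $\eta^{(u)}$ using the update rules~\eqref{eq:fixed2}--\eqref{eq:def2}, combined with the fact that $h_u$ is witnessed at index $m_u-2$ by construction~\eqref{eq:defsu}.

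For part (1), the base case $h_0\leq r-1$ is immediate from $\nu\in\mathcal{A}_r$. For the monotonicity $h_{u+1}\leq h_u$, one must show $\eta_j^{(u+1)}+\eta_{j+1}^{(u+1)}\leq h_u$ for every $j\geq 2u+2$. Splitting the range of $j$ according to whether $(j,j+1)$ lies entirely in the shifted zone $[2u+2,m_u-1]$, entirely in the fixed zone $[m_u,\infty)$, or straddles $m_u$, the interior zones immediately give the bound from the definition of $h_u$ applied to the relevant $\eta^{(u)}$-pair. The only delicate case is the straddling one, where we must bound $\eta_{m_u-3}^{(u)}+\eta_{m_u}^{(u)}$; this follows by pairing
\[
\eta_{m_u-3}^{(u)}+\eta_{m_u-2}^{(u)}\leq h_u,\qquad \eta_{m_u-1}^{(u)}+\eta_{m_u}^{(u)}\leq h_u,
\]
and subtracting the identity $\eta_{m_u-2}^{(u)}+\eta_{m_u-1}^{(u)}=h_u$ coming from~\eqref{eq:defsu}. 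Part (2) is then direct from~\eqref{eq:defkappa}: each bracket in the sum is non-negative by the definition of $h_u$, and $h_u-\eta_{2u}^{(u)}\geq 0$ for the same reason, since $\eta_{2u}^{(u)}\leq\eta_{2u}^{(u)}+\eta_{2u+1}^{(u)}$.

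Part (3) is the heart of the argument. Assuming $h_u=h_{u+1}$, the bound $m_{u+1}\geq m_u+2$ amounts to ruling out $\eta_{j-2}^{(u+1)}+\eta_{j-1}^{(u+1)}=h_u$ for $2u+4\leq j\leq m_u+1$. When the pair $(j-2,j-1)$ lies in the shifted zone, this follows from the minimality of $m_u$ applied to $\eta^{(u)}$. The boundary case $j=m_u+1$ (which requires $m_u\geq 2u+3$; the case $m_u=2u+2$ is trivial, as then $m_{u+1}\geq 2u+4=m_u+2$ automatically) is handled by sharpening the straddling estimate from part (1): by minimality of $m_u$, the pairing inequality $\eta_{m_u-3}^{(u)}+\eta_{m_u-2}^{(u)}\leq h_u$ is strict, and this strictness propagates through the subtraction to give $\eta_{m_u-3}^{(u)}+\eta_{m_u}^{(u)}<h_u$, as required.

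For the final inequality $\kappa_{u+1}\geq\kappa_u$, I would expand both sides via~\eqref{eq:defkappa}, substitute~\eqref{eq:fixed2}--\eqref{eq:def2}, and reindex the shifted sum so that it aligns with the sum defining $\kappa_u$. After cancellation the difference should simplify to
\[
\kappa_{u+1}-\kappa_u \;=\; \bigl(\eta_{m_u-2}^{(u)}-\eta_{m_u}^{(u)}\bigr) + \sum_{j=m_u}^{m_{u+1}-3}\bigl[h_u-(\eta_j^{(u)}+\eta_{j+1}^{(u)})\bigr],
\]
where the tail sum is non-negative by definition of $h_u$, and the correction $\eta_{m_u-2}^{(u)}-\eta_{m_u}^{(u)}$ is non-negative since $h_u=\eta_{m_u-2}^{(u)}+\eta_{m_u-1}^{(u)}\geq\eta_{m_u-1}^{(u)}+\eta_{m_u}^{(u)}$. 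I expect the main obstacle to be precisely this bookkeeping: sorting the two sub-cases $m_u=2u+2$ and $m_u\geq 2u+3$, together with the boundary term $h_u-\eta_{2u+2}^{(u+1)}$ vs.\ $h_u-\eta_{2u}^{(u)}$, so that the telescoped identity above holds uniformly; once the alignment is achieved, the positivity follows from the elementary comparisons already established in parts (1) and (2).
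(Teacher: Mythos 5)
Your proposal is correct and follows essentially the same route as the paper: the same decomposition into shifted/fixed/straddling index zones, the same use of the minimality in~\eqref{eq:defsu} to get strictness at the straddling pair (the paper phrases it as $\eta^{(u)}_{m_u-3}<\eta^{(u)}_{m_u-1}$, you as an add-and-subtract of the three pair inequalities), and the same comparison of the two $\kappa$-sums after reindexing. Your exact telescoped formula for $\kappa_{u+1}-\kappa_u$ checks out (the paper settles for a two-step chain of inequalities), so the only differences are presentational.
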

\begin{proof}
\begin{enumerate}
\item By Remark~\ref{rem:eta_positive}, $h_u\geq0$ for all $u\geq0$. By definition of $\mathcal{A}_{r}$, $\eta_j^{(0)}+\eta_{j+1}^{(0)}\leq r-1$ for all $j\geq 0$, so $h_0\leq r-1$. Hence, the only thing remaining to show is that for all $u\in\{0,\ldots,U-1\}$, $h_{u+1}\leq h_u$.
By definition of $h_u$, it is enough to show that $\eta^{(u+1)}_j+\eta^{(u+1)}_{j+1}\leq h_u$ for all $j\geq 2u+2$. We consider the three following cases. 
\begin{itemize}
\item If $2u+2\leq j<m_u-1$, then by~\eqref{eq:decalage2}, $\eta^{(u+1)}_j+\eta^{(u+1)}_{j+1}=\eta^{(u)}_{j-2}+\eta^{(u)}_{j-1}$, and by~\eqref{eq:defsu}, we derive
\begin{equation}\label{inegstricte1}
\eta^{(u+1)}_j+\eta^{(u+1)}_{j+1}< h_u.
\end{equation} 
\item If $j\geq m_u$, then by~\eqref{eq:fixed2} and the definition of $h_u$,
$$
\eta^{(u+1)}_j+\eta^{(u+1)}_{j+1}=\eta^{(u)}_{j}+\eta^{(u)}_{j+1}\leq  h_u.
$$
\item If $j=m_u-1$, then $m_u\geq 2u+3$ and by~\eqref{eq:decalage2} and~\eqref{eq:fixed2}, we have $\eta^{(u+1)}_{m_u-1}+\eta^{(u+1)}_{m_u}= \eta^{(u)}_{m_u-3}+\eta^{(u)}_{m_u}$. Now we prove that
\begin{equation}\label{strictmu}
\eta^{(u)}_{m_u-3}<\eta^{(u)}_{m_u-1}.
\end{equation}
Indeed, if we had $\eta^{(u)}_{m_u-3}\geq\eta^{(u)}_{m_u-1}$, then it would yield $\eta^{(u)}_{m_u-3}+\eta^{(u)}_{m_u-2}\geq\eta^{(u)}_{m_u-2}+\eta^{(u)}_{m_u-1}=h_u$ by~\eqref{eq:defsu}, therefore by definition of $h_u$ we would get $\eta^{(u)}_{m_u-3}+\eta^{(u)}_{m_u-2}=h_u$,  contradicting the minimality in~\eqref{eq:defsu}. Therefore
$$
\eta^{(u+1)}_{m_u-1}+\eta^{(u+1)}_{m_u}< \eta^{(u)}_{m_u-1}+\eta^{(u)}_{m_u}\leq\eta^{(u)}_{m_u-1}+\eta^{(u)}_{m_u-2}=h_u\quad\quad\text{(by~\eqref{eq:defsu})},
$$
thus~\eqref{inegstricte1} is still satisfied.
\end{itemize}

\item From the definition of $h_u$, we know that $h_u \geq \eta_{2u}^{(u)}$ and $h_u \geq \eta_j^{(u)}+\eta_{j+1}^{(u)}$ for all $j \geq 2u$. The result follows immediately.

\item Suppose that $h_u=h_{u+1}$. In the proof of Proposition~\ref{prop:image}~(1), we showed that~\eqref{inegstricte1} is satisfied for all $2u+2\leq j<m_u$. Therefore, by~\eqref{eq:defsu}, $m_{u+1}\geq m_u+2$. Now prove the second part:
\begin{align*}
\kappa_{u+1}&=  h_u-\eta_{2u}^{(u)}+ \sum_{j=2u+2}^{m_{u+1}-3}\left[h_u-\left(\eta_j^{(u+1)}+\eta_{j+1}^{(u+1)}\right)\right] &\text{(by~\eqref{eq:decalage2} and~\eqref{eq:defkappa})}\\
&\geq  h_u-\eta_{2u}^{(u)}+\sum_{j=2u+2}^{m_{u}-1}\left[h_u-\left(\eta_j^{(u+1)}+\eta_{j+1}^{(u+1)}\right)\right]&\text{(as $m_{u+1}\geq m_u+2$)}\\
&\geq  h_u-\eta_{2u}^{(u)}+\sum_{j=2u}^{m_{u}-3}\left[h_u-\left(\eta_j^{(u)}+\eta_{j+1}^{(u)}\right)\right]=\kappa_{u}&\text{(by~\eqref{eq:decalage2})}.
\end{align*}
\end{enumerate}
\end{proof}

\begin{Corollary}\label{coro:imGamma}
The image of $\Gamma$ is in $\mathcal{P}_{r}$. More precisely, $\Gamma(\nu) \in \{\mu(s_1,\ldots,s_{r-1})\}\times \mathcal{P}(s_1,\ldots,s_{r-1})$, where for all $j\in\{1,\ldots,r\}$,
$$s_{j} := \min\{u\geq 0: h_u \leq j-1\}.$$

\end{Corollary}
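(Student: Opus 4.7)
The statement to prove packages two sub-claims: first, that the partition $\mu$ recorded by $\Gamma(\nu)$ coincides with $\mu(s_1,\ldots,s_{r-1})$, and second, that the sequence $\kappa=(\kappa_0,\ldots,\kappa_{U-1})$ belongs to $\mathcal{P}(s_1,\ldots,s_{r-1})$. Before attacking either, I will first verify that $s_1,\ldots,s_r$ is a well-defined non-increasing sequence ending at $0$. By Proposition~\ref{prop:image}~(1), the values $(h_u)_{u\geq 0}$ form a non-increasing sequence in $\{0,1,\ldots,r-1\}$, so $s_r=\min\{u\geq 0:h_u\leq r-1\}=0$ and the implication $h_u\leq j-1\Rightarrow h_u\leq j$ yields $s_j\geq s_{j+1}$. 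Moreover, $s_1=\min\{u\geq 0:h_u\leq 0\}=\min\{u\geq 0:h_u=0\}=U$. Finally, the definitions of $s_j$ and $s_{j+1}$ together with the monotonicity of $(h_u)$ force $h_u=j$ on every interval $s_{j+1}\leq u<s_j$, a fact I will use repeatedly below.

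For the first sub-claim, my plan is to show by induction on $v$ that $(\eta^{(v)}_{2u},\eta^{(v)}_{2u+1})=(h_u,0)$ for every $0\leq u<v\leq U$. At step $u$, the rule~\eqref{eq:def2} sets $(\eta^{(u+1)}_{2u},\eta^{(u+1)}_{2u+1})=(h_u,0)$; at every subsequent step $v\geq u+1$, the rule~\eqref{eq:fixed2} leaves indices below $2v\geq 2u+2$ untouched. Taking $v=U$ then gives $(\eta^{(U)}_{2u},\eta^{(U)}_{2u+1})=(h_u,0)$ for all $0\leq u<U$. For $j\geq 2U$, the definition of $h_U=0$ combined with Remark~\ref{rem:eta_positive} forces $\eta^{(U)}_{j}=0$. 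Since $s_1=U$ and $h_u=j$ on $[s_{j+1},s_j)$, the sequence $\eta^{(U)}$ satisfies Definition~\ref{def:mu} for $\mu(s_1,\ldots,s_{r-1})$ on the nose.

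For the second sub-claim, I need non-negativity of $\kappa$ and, for each $j\in\{1,\ldots,r-1\}$, that $\kappa^{(j)}=(\kappa_{s_j-1},\ldots,\kappa_{s_{j+1}})$ is a partition, i.e.\ non-increasing. Non-negativity is exactly Proposition~\ref{prop:image}~(2). For monotonicity inside each block, I use the constancy $h_u=j$ for $s_{j+1}\leq u<s_j$: whenever $s_{j+1}\leq u$ and $u+1\leq s_j-1$, one has $h_u=h_{u+1}=j$, so Proposition~\ref{prop:image}~(3) yields $\kappa_{u+1}\geq \kappa_u$. Reading this from $u=s_{j+1}$ up to $u=s_j-2$ is precisely the non-decreasing condition that makes $\kappa^{(j)}$ a partition (the cases $s_j-s_{j+1}\in\{0,1\}$ being vacuous). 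Both sub-claims taken together give $\Gamma(\nu)\in\{\mu(s_1,\ldots,s_{r-1})\}\times\mathcal{P}(s_1,\ldots,s_{r-1})\subset\mathcal{P}_r$.

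The only genuinely non-trivial point in this plan is the conceptual dictionary between the dynamics of $\Gamma$ and the block structure of $\mu(s_1,\ldots,s_{r-1})$: one must see that the level $h_u$ produced at step $u$ is exactly the common value of the pairs $(f_{2u},f_{2u+1})$ in the target partition, and that the stopping time $U$ is none other than $s_1$. Once this correspondence is made explicit via the definitions $s_j=\min\{u:h_u\leq j-1\}$, the remaining verifications reduce to direct applications of Proposition~\ref{prop:image} and the update rules~\eqref{eq:fixed2}--\eqref{eq:def2}, with no further combinatorial difficulty.
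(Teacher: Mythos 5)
Your proposal is correct and follows essentially the same route as the paper's proof: the key equivalence $h_u=j\Leftrightarrow s_{j+1}\leq u<s_j$, the observation that \eqref{eq:def2} fixes $(\eta^{(u+1)}_{2u},\eta^{(u+1)}_{2u+1})=(h_u,0)$ and \eqref{eq:fixed2} freezes these entries at all later steps, and the use of Proposition~\ref{prop:image}~(2) and~(3) for the non-negativity and blockwise monotonicity of $\kappa$. The only cosmetic difference is that you spell out the induction on $v$ and the verification that $(s_j)$ is non-increasing with $s_1=U$ and $s_r=0$ slightly more explicitly than the paper does.
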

\begin{proof}
By Remark \ref{rem:eta_positive}, $\eta^{(U)}$ is a sequence of non-negative integers. Since $h_U=0$, $\eta^{(U)}_j=0$ for all $j\geq 2U$ by definition. From Proposition \ref{prop:image} (1), we know that $h_u \leq r-1$ for all $u$, so the $s_j$'s are well-defined.

Now check that $\eta^{(U)}=\mu(s_1,\ldots,s_{r-1})$.
Let $u \in \{0, \dots, U-1\}$.
By definition of the $s_j$'s, we have $U=s_{1}\geq \cdots \geq s_{r}=0$, and for all $1\leq j\leq r-1$, 
\begin{equation}\label{equiv}
h_u=j\Leftrightarrow s_{j+1}\leq u < s_j.
\end{equation}

By~\eqref{eq:fixed2}, for all $j<2u+2$, $\eta^{(u+1)}_{j}= \cdots =\eta^{(U-1)}_{j}=\eta^{(U)}_{j}$. Hence
\begin{equation}\label{etas1}
(\eta_{2u}^{(U)},\eta_{2u+1}^{(U)})= (\eta_{2u}^{(u+1)},\eta_{2u+1}^{(u+1)}) = (h_u,0),
\end{equation}
where the second equality follows from~\eqref{eq:def2}.
Therefore by~\eqref{equiv},
$$(\eta_{2u}^{(U)},\eta_{2u+1}^{(U)}) = (j,0)\text{ for all } s_{j+1}\leq u < s_j,$$
which is exactly the multiplicity sequence of $\mu(s_1,\ldots,s_{r-1})$ from Definition~\ref{def:mu}.

Moreover, by~\eqref{equiv} and Proposition~\ref{prop:image}~(2) and~(3), we know that $(\kappa_{s_{j+1}},\ldots,\kappa_{s_{j}-1})$ is a non-decreasing sequence of non-negative integers for all $1\leq j\leq r-1$. Hence $\kappa \in \mathcal{P}(s_1,\ldots,s_{r-1})$. Thus  $\Gamma(\nu) \in \{\mu(s_1,\ldots,s_{r-1})\}\times \mathcal{P}(s_1,\ldots,s_{r-1}) \subset \mathcal{P}_{r}$. 
\end{proof}

Finally, we need two last properties to show that $\Gamma$ is weight- and length-preserving. 

\begin{Proposition}\label{prop:welldefgammaweight}
For all $u\in\{0,\ldots,U-1\}$, the following holds:
\begin{enumerate}
\item the length of $\eta^{(u)}$ equals the length of $\eta^{(u+1)}$, i.e. $\sum_{j\geq 0} \eta^{(u+1)}_{j} = \sum_{j\geq 0} \eta^{(u)}_{j}$,
\item the weight of $\eta^{(u+1)}$ is $\kappa_u$ less than the weight of $\eta_u$, i.e. $\sum_{j\geq 0} j\cdot \eta^{(u+1)}_{j} = -\kappa_u+\sum_{j\geq 0} j\cdot \eta^{(u)}_{j}$.
\end{enumerate}
\end{Proposition}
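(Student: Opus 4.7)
My plan is to carry out a direct computation, exploiting the fact that going from $\eta^{(u)}$ to $\eta^{(u+1)}$ only changes the entries with indices in the window $2u\leq j\leq m_u-1$; outside of this window, formulas~\eqref{eq:fixed2} force the two sequences to agree, so I only need to compare partial sums over the window. Inside the window, formula~\eqref{eq:decalage2} tells me that the block $\eta_{2u}^{(u)},\ldots,\eta_{m_u-3}^{(u)}$ is shifted two positions to the right, while formula~\eqref{eq:def2} replaces $(\eta_{2u}^{(u)},\eta_{2u+1}^{(u)})$ by $(h_u,0)$ and discards $(\eta_{m_u-2}^{(u)},\eta_{m_u-1}^{(u)})$.

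For part (1), the length difference equals
$$\sum_{j=2u}^{m_u-1}\eta^{(u+1)}_j-\sum_{j=2u}^{m_u-1}\eta^{(u)}_j=\Bigl(h_u+0+\sum_{j=2u}^{m_u-3}\eta^{(u)}_j\Bigr)-\Bigl(\sum_{j=2u}^{m_u-3}\eta^{(u)}_j+\eta^{(u)}_{m_u-2}+\eta^{(u)}_{m_u-1}\Bigr),$$
which simplifies to $h_u-(\eta^{(u)}_{m_u-2}+\eta^{(u)}_{m_u-1})$. By the defining equation~\eqref{eq:defsu} of $m_u$ this last quantity is zero, which proves (1).

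For part (2), I expand the weighted sums in exactly the same way. The shift by two indices produces an extra factor, yielding
$$\sum_{j=2u}^{m_u-1}j\,\eta^{(u+1)}_j=2u\,h_u+\sum_{k=2u}^{m_u-3}(k+2)\eta^{(u)}_k=2u\,h_u+\sum_{k=2u}^{m_u-3}k\,\eta^{(u)}_k+2\sum_{k=2u}^{m_u-3}\eta^{(u)}_k,$$
so, after subtracting $\sum_{j=2u}^{m_u-1}j\,\eta^{(u)}_j$ and using $\eta^{(u)}_{m_u-2}+\eta^{(u)}_{m_u-1}=h_u$ to handle the two discarded terms, the weight difference reduces to
$$-(m_u-2u-2)h_u+2\sum_{k=2u}^{m_u-3}\eta^{(u)}_k-\eta^{(u)}_{m_u-1}.$$
It then remains to verify that this expression equals $-\kappa_u$. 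Starting from~\eqref{eq:defkappa}, I expand
$$\kappa_u=h_u-\eta^{(u)}_{2u}+(m_u-2u-2)h_u-\sum_{j=2u}^{m_u-3}\eta^{(u)}_j-\sum_{j=2u+1}^{m_u-2}\eta^{(u)}_j,$$
combine the two telescoping-like sums into $2\sum_{j=2u}^{m_u-3}\eta^{(u)}_j-\eta^{(u)}_{2u}+\eta^{(u)}_{m_u-2}$ (after moving the boundary terms), and finally substitute $\eta^{(u)}_{m_u-2}=h_u-\eta^{(u)}_{m_u-1}$. A short simplification shows this matches the negative of the above difference, completing (2).

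The only real obstacle is bookkeeping: keeping careful track of which index range is relabelled by $+2$, correctly handling the two boundary pairs $(\eta^{(u)}_{2u},\eta^{(u)}_{2u+1})$ and $(\eta^{(u)}_{m_u-2},\eta^{(u)}_{m_u-1})$, and regrouping the two shifted copies of $\sum\eta^{(u)}_j$ that appear inside $\kappa_u$. No new combinatorial input beyond the definitions and the equality $\eta^{(u)}_{m_u-2}+\eta^{(u)}_{m_u-1}=h_u$ from~\eqref{eq:defsu} is required.
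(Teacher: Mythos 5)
Your computation is correct and follows essentially the same route as the paper's proof: both split the sums into the fixed part outside the window, the block shifted by two, and the modified pair, then invoke $\eta^{(u)}_{m_u-2}+\eta^{(u)}_{m_u-1}=h_u$ from~\eqref{eq:defsu} for part (1) and the expansion of~\eqref{eq:defkappa} for part (2). The algebraic identity you verify, $-\kappa_u=-(m_u-2u-2)h_u+2\sum_{k=2u}^{m_u-3}\eta^{(u)}_k-\eta^{(u)}_{m_u-1}$, is exactly the rearrangement the paper uses in its third displayed line, so the two arguments differ only in bookkeeping.
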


\begin{proof}
\begin{enumerate}
\item We have
\begin{align*}
\sum_{j\geq 0} \eta_{j}^{(u+1)} &=  \sum_{j=0}^{2u-1} \eta_{j}^{(u)} + h_u+ \sum_{j= 2u+2}^{m_u-1} \eta_{j-2}^{(u)} +  \sum_{j\geq m_u} \eta_{j}^{(u)} &\text{(by~\eqref{eq:fixed2}--\eqref{eq:def2})}\\
&= \sum_{j=0}^{2u-1} \eta_{j}^{(u)} + \eta_{m_u-2}^{(u)}+\eta_{m_u-1}^{(u)}+ \sum_{j= 2u}^{m_u-3} \eta_{j}^{(u)} +  \sum_{j\geq m_u} \eta_{j}^{(u)}&\text{(by~\eqref{eq:defsu})}\\
&=\sum_{j\geq 0} \eta_{j}^{(u)}.
\end{align*} 

\item  We have
\begin{align*}
\sum_{j\geq 0} j\cdot\eta_{j}^{(u+1)} &=  \sum_{j=0}^{2u-1} j\cdot\eta_{j}^{(u)} + 2u\cdot h_u+ \sum_{j= 2u+2}^{m_u-1} j\cdot\eta_{j-2}^{(u)} +  \sum_{j\geq m_u} j\cdot\eta_{j}^{(u)}&\text{(by~\eqref{eq:fixed2}--\eqref{eq:def2})}\\
&= \sum_{j=0}^{2u-1} j\cdot\eta_{j}^{(u)} + 2u\cdot h_u+ \sum_{j= 2u}^{m_u-3} (j+2)\cdot\eta_{j}^{(u)} +  \sum_{j\geq m_u} j\cdot\eta_{j}^{(u)}\\
&=\sum_{j=0}^{m_u-3} j\cdot\eta_{j}^{(u)}+  \sum_{j\geq m_u} j\cdot\eta_{j}^{(u)}+ 2u\cdot h_u-\kappa_u+(m_u-2u-1)h_u-\eta_{m_u-2}^{(u)}&\text{(by~\eqref{eq:defkappa})}\\
&= -\kappa_u+\sum_{j\geq 0} j\cdot\eta_{j}^{(u)}&\text{(by~\eqref{eq:defsu})}.
\end{align*}
\end{enumerate}
\end{proof}

\begin{Corollary}\label{coro:Gammapreserving}
$\Gamma$ preserves the weight and the length, i.e., for all  $\nu \in \mathcal{A}_r$,
$$|\Gamma(\nu)|=|(\mu,\kappa)|=|\nu|,$$
and the length of $\mu$ is equal to the length of $\nu$.
\end{Corollary}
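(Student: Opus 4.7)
The plan is to deduce the corollary as an immediate telescoping consequence of Proposition~\ref{prop:welldefgammaweight}. Recall from the construction of $\Gamma$ that starting from $\eta^{(0)}$, the multiplicity sequence of $\nu$, we iteratively build $\eta^{(1)}, \eta^{(2)}, \ldots, \eta^{(U)}$, and by Corollary~\ref{coro:imGamma}, we have $\eta^{(U)} = \mu(s_1,\ldots,s_{r-1}) = \mu$ while $\kappa = (\kappa_0, \ldots, \kappa_{U-1})$.

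First, I would handle the length. By Proposition~\ref{prop:welldefgammaweight}~(1), for each $u \in \{0, \ldots, U-1\}$, one has $\sum_{j \geq 0} \eta^{(u)}_j = \sum_{j \geq 0} \eta^{(u+1)}_j$. Chaining these equalities from $u=0$ to $u=U-1$ yields
$$\sum_{j \geq 0} \eta^{(0)}_j = \sum_{j \geq 0} \eta^{(U)}_j,$$
which is exactly the statement that the length of $\nu$ equals the length of $\mu$. Since $\kappa$ does not contribute to the length in the definition of $|\Gamma(\nu)|$ (the length of an element of $\mathcal{P}_r$ is defined as the length of its $\mu$-component), this gives the length-preservation claim.

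Next, for the weight, I would telescope Proposition~\ref{prop:welldefgammaweight}~(2) analogously: summing the relation $\sum_{j\geq 0} j\cdot\eta^{(u+1)}_j = -\kappa_u + \sum_{j\geq 0} j\cdot\eta^{(u)}_j$ over $u = 0, \ldots, U-1$ gives
$$|\mu| = \sum_{j\geq 0} j\cdot\eta^{(U)}_j = \sum_{j\geq 0} j\cdot\eta^{(0)}_j - \sum_{u=0}^{U-1}\kappa_u = |\nu| - |\kappa|.$$
Rearranging yields $|\nu| = |\mu| + |\kappa| = |(\mu,\kappa)| = |\Gamma(\nu)|$, as required. There is no obstacle here beyond verifying the indexing conventions; the real work was already done in Proposition~\ref{prop:welldefgammaweight}, and this corollary is a direct telescoping.
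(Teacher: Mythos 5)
Your proposal is correct and follows exactly the paper's argument: the paper likewise derives the corollary by telescoping the two parts of Proposition~\ref{prop:welldefgammaweight} from $u=0$ to $u=U-1$, using $\eta^{(U)}=\mu$ and the fact that the length of $(\mu,\kappa)$ is by definition the length of $\mu$. No gaps; your version simply spells out the telescoping a bit more explicitly than the paper does.
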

\begin{proof}
By Proposition~\ref{prop:welldefgammaweight} (2),
$$|\Gamma(\nu)|=|\mu|+|\kappa|=|\eta^{(U)}|+|\kappa|=-|\kappa|+|\nu|+|\kappa|= |\nu|,$$ so the weight is preserved. Moreover Proposition~\ref{prop:welldefgammaweight}~(1) implies that the length of $\nu$ is the same as the length of $\mu$, as by definition its multiplicity sequence is $\eta^{(U)}$.
\end{proof}

\subsection{$ \Gamma\circ\Lambda$ is the identity on $\mathcal{P}_r$ \label{sec:GammaoLambda}}
Our goal in this section is to show that $ \Gamma\circ\Lambda$ is the identity map on $\mathcal{P}_r$. 
Let $(\mu(s_1,\ldots,s_{r-1}),\lambda)\in\mathcal{P}_r$, and apply $\Lambda$ to it, using the notations from Section~\ref{sec:Lambda}. Then apply $\Gamma$ with the notations from Section~\ref{sec:Gamma}. We will show that we recover $(\mu(s_1,\ldots,s_{r-1}),\lambda)$, and the following proposition will play a key role in doing that.

\begin{Proposition}\label{prop:inverselambda}
For all $u \in \{0,\ldots,s_{1}\}$, we have
\begin{align}
g_u=\max\{\theta^{(u)}_j+\theta^{(u)}_{j+1}:j\geq 2u\},  \label{eq:g_u} \\
n_u = \min\{j\geq 2u+2:\theta^{(u)}_{j-2}+\theta^{(u)}_{j-1}=g_u\},  \label{eq:n_u}
\end{align}
with the convention that $n_{s_1}=2s_1+2$.
\end{Proposition}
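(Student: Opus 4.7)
The plan is to proceed by backward induction on $u\in\{0,\ldots,s_1\}$. For the base case $u=s_1$, the sequence $\theta^{(s_1)}$ is the multiplicity sequence of $\mu(s_1,\ldots,s_{r-1})$, which vanishes at every position $j\geq 2s_1$. Since $g_{s_1}=0$ and, by convention, $n_{s_1}=2s_1+2$, both equations \eqref{eq:g_u} and \eqref{eq:n_u} hold trivially.

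For the induction step, assume the statement at step $u+1$. Equation \eqref{eq:g_u} is then immediate: Proposition~\ref{prop:welldeflambda_s1}~(3) gives the upper bound $\theta^{(u)}_j+\theta^{(u)}_{j+1}\leq g_u$ for all $j\geq 2u$, while \eqref{eq:def1} provides the equality $\theta^{(u)}_{n_u-2}+\theta^{(u)}_{n_u-1}=g_u$, with $n_u-2\geq 2u$. To prove \eqref{eq:n_u}, the value $j=n_u$ is a candidate, so it remains to show that $\theta^{(u)}_{j-2}+\theta^{(u)}_{j-1}<g_u$ for every $j\in\{2u+2,\ldots,n_u-1\}$. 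I treat two sub-cases separately.

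For the sub-case $j=n_u-1$ (assuming $n_u\geq 2u+3$), combining \eqref{eq:decalage1} at position $n_u-3$ and \eqref{eq:def1} for $\theta^{(u)}_{n_u-2}$ yields
$$\theta^{(u)}_{n_u-3}+\theta^{(u)}_{n_u-2}=\theta^{(u+1)}_{n_u-1}+g_u-\theta^{(u)}_{n_u-1}.$$
The minimality of $n_u$ in \eqref{eq:defru} forces $\theta^{(u)}_{n_u-1}\geq \theta^{(u+1)}_{n_u-1}+1$ (the partial sum up to $t=n_u-1$ is strictly smaller than $\lambda_u$), so the displayed quantity is at most $g_u-1$. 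For the sub-case $j\in\{2u+2,\ldots,n_u-2\}$, \eqref{eq:decalage1} gives $\theta^{(u)}_{j-2}+\theta^{(u)}_{j-1}=\theta^{(u+1)}_j+\theta^{(u+1)}_{j+1}$, and setting $T=j+2$ it is enough to prove $\theta^{(u+1)}_{T-2}+\theta^{(u+1)}_{T-1}<g_u$ for all $T\in\{2u+4,\ldots,n_u\}$. Proposition~\ref{prop:welldeflambda_s1}~(3) at step $u+1$ bounds this by $g_{u+1}\leq g_u$, which is strict whenever $g_u>g_{u+1}$. When $g_u=g_{u+1}$, the induction hypothesis at $u+1$ says that the equality $\theta^{(u+1)}_{T-2}+\theta^{(u+1)}_{T-1}=g_{u+1}$ first occurs at $T=n_{u+1}$, so the required strict bound reduces to the inequality $n_u<n_{u+1}$.

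The main obstacle is establishing this strict inequality $n_u<n_{u+1}$ in the regime $g_u=g_{u+1}$. This is exactly where the hypothesis $\lambda\in\mathcal{P}(s_1,\ldots,s_{r-1})$ enters: since $u$ and $u+1$ lie in the same block $[s_{g+1},s_g)$ with $g=g_u=g_{u+1}$, the partition property of $\lambda^{(g)}$ forces $\lambda_u\leq\lambda_{u+1}$. I then compare the partial sums defining $n_u$ and $n_{u+1}$ (the former computed from $\theta^{(u+1)}$, the latter from $\theta^{(u+2)}$) by expanding via the update rules \eqref{eq:fixed1}--\eqref{eq:def1} at step $u+1$ and isolating the two boundary terms at $j\in\{2u+2,2u+3\}$, both of which are non-negative by Proposition~\ref{prop:welldeflambda_s1}~(3) at step $u+1$. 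Combined with the identity $\theta^{(u+1)}_{n_{u+1}-2}+\theta^{(u+1)}_{n_{u+1}-1}=g_{u+1}$, together with the bound $\lambda_u\leq\lambda_{u+1}$, this comparison gives $n_u<n_{u+1}$, completing the induction.
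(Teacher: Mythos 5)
Your proof is correct and follows essentially the same route as the paper: \eqref{eq:g_u} from Proposition~\ref{prop:welldeflambda_s1}~(3) together with \eqref{eq:def1}, then a backward induction for \eqref{eq:n_u} with the same case split (the boundary position $n_u-3$ handled via the strict increase $\theta^{(u)}_{n_u-1}>\theta^{(u+1)}_{n_u-1}$, the remaining positions via $g_{u+1}\leq g_u$ and the induction hypothesis). The monotonicity $n_u<n_{u+1}$ when $g_u=g_{u+1}$ that you sketch at the end via the partial-sum comparison and $\lambda_u\leq\lambda_{u+1}$ is exactly the content of the paper's Lemma~\ref{lem:memegu}, which even yields the slightly stronger bound $n_u\leq n_{u+1}-2$.
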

To prove this result, we need the following lemma.

\begin{Lemma}\label{lem:memegu}
For all $u \in \{0,\ldots,s_{1}-1\}$, we have $g_u\geq g_{u+1}$. Moreover $g_u=g_{u+1}$ implies that $n_u\leq n_{u+1}-2$.
\end{Lemma}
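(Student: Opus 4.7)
My plan is as follows. The first inequality $g_u \geq g_{u+1}$ is immediate from the definition: the set $\{j : s_j > u+1\}$ is contained in $\{j : s_j > u\}$, so taking maxima gives $g_{u+1} \leq g_u$.

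For the implication, I would assume $g_u = g_{u+1} = g$ and first observe that $u$ and $u+1$ both lie in the interval $[s_{g+1}, s_g - 1]$. By the definition of $\mathcal{P}(s_1,\ldots,s_{r-1})$, the sequence $(\lambda_{s_{g+1}},\ldots,\lambda_{s_g-1})$ is non-decreasing, so $\lambda_u \leq \lambda_{u+1}$. The heart of the argument is then to relate the partial sum defining $n_u$ (computed in $\theta^{(u+1)}$) to the one defining $n_{u+1}$ (computed in $\theta^{(u+2)}$), using the explicit transformation~\eqref{eq:fixed1}--\eqref{eq:def1}.

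I would split into two cases. In the main case $n_{u+1} \geq 2u+5$, the left-shift rule gives $\theta_j^{(u+1)} = \theta_{j+2}^{(u+2)}$ for every $2u+2 \leq j \leq n_{u+1}-3$; combining this with $\theta_{2u+1}^{(u+1)} = \theta_{2u+3}^{(u+2)} = 0$ (Proposition~\ref{prop:welldeflambda_s1-1}(1) applied at $u$ and $u+1$) and reindexing $j \mapsto j+2$, one should obtain
\begin{equation*}
\sum_{j=2u+2}^{n_{u+1}-3} \bigl[g - \bigl(\theta_j^{(u+1)} + \theta_{j-1}^{(u+1)}\bigr)\bigr] \;=\; \sum_{j=2u+4}^{n_{u+1}-1} \bigl[g - \bigl(\theta_j^{(u+2)} + \theta_{j-1}^{(u+2)}\bigr)\bigr] \;\geq\; \lambda_{u+1} \;\geq\; \lambda_u,
\end{equation*}
where the first inequality comes from the defining property of $n_{u+1}$. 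The minimality of $n_u$ then yields $n_u \leq n_{u+1}-3$, which is even stronger than the claim. In the edge case $n_{u+1} = 2u+4$ the shift region is empty, so~\eqref{eq:def1} applied at step $u+1$ gives directly $\theta_{2u+3}^{(u+1)} = \theta_{2u+3}^{(u+2)} + \lambda_{u+1} = \lambda_{u+1}$, whence $\theta_{2u+2}^{(u+1)} = g - \lambda_{u+1}$; the $j=2u+2$ term of the sum defining $n_u$ is then $\lambda_{u+1} \geq \lambda_u$, forcing $n_u = 2u+2 = n_{u+1}-2$.

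I expect the main technical obstacle to be the careful bookkeeping of indices in the main case: one must ensure that the boundary contribution at $j = 2u+2$ (whose partner $\theta_{2u+1}^{(u+1)}$ is fixed from earlier steps rather than shifted) matches up with the term $[g-(\theta_{2u+4}^{(u+2)}+\theta_{2u+3}^{(u+2)})]$ in the defining sum for $n_{u+1}$, and that the degenerate case $n_{u+1} = 2u+4$ is treated separately since the shift region is then empty and the main-case identification breaks down.
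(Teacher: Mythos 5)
The first claim ($g_u\geq g_{u+1}$) and your treatment of the edge case $n_{u+1}=2u+4$ are fine, and your reduction to $\lambda_u\leq\lambda_{u+1}$ via the block structure of $\mathcal{P}(s_1,\ldots,s_{r-1})$ is exactly the right preliminary step. However, the main case contains a genuine error: the inequality
$$\sum_{j=2u+4}^{n_{u+1}-1} \bigl[g - \bigl(\theta_j^{(u+2)} + \theta_{j-1}^{(u+2)}\bigr)\bigr] \;\geq\; \lambda_{u+1}$$
does \emph{not} follow from the defining property of $n_{u+1}$ --- it contradicts it. By the minimality in~\eqref{eq:defru}, the partial sum up to $t=n_{u+1}-1$ is \emph{strictly less} than $\lambda_{u+1}$ whenever $n_{u+1}\geq 2u+5$; only the sum up to $t=n_{u+1}$ is guaranteed to be $\geq\lambda_{u+1}$. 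Your reindexed sum stops one term short. Concretely, take $r=2$, $s_1=2$, $\lambda=(2,2)$: then $\theta^{(2)}=(1,0,1,0,0,\ldots)$, $g_0=g_1=1$, $n_1=5$ (so you are in the main case), $\theta^{(1)}=(1,0,0,0,1,0,\ldots)$, and your claimed inequality reads $1\geq 2$. Moreover your conclusion $n_u\leq n_{u+1}-3$ is false here, since $n_0=3=n_1-2$; the lemma's bound $n_u\leq n_{u+1}-2$ is sharp.

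The missing ingredient is the term at $j=n_{u+1}-2$ of the step-$(u+1)$ sum. That term involves the \emph{modified} entry $\theta^{(u+1)}_{n_{u+1}-2}=g_{u+1}-\theta^{(u+1)}_{n_{u+1}-1}$ from~\eqref{eq:def1}, which cannot be obtained by the shift~\eqref{eq:decalage1}, and it is precisely this term that carries the deficit $\lambda_{u+1}-\sum_{j=2u+4}^{n_{u+1}-1}[\cdots]$. The paper's proof evaluates the full sum $\sum_{j=2u+2}^{n_{u+1}-2}\bigl[g_u-\bigl(\theta_j^{(u+1)}+\theta_{j-1}^{(u+1)}\bigr)\bigr]$ directly, using~\eqref{eq:decalage1} together with the explicit formula~\eqref{eq:def1} for the modified entries, and finds that it equals exactly $\lambda_{u+1}\geq\lambda_u$; the minimality of $n_u$ then gives $n_u\leq n_{u+1}-2$. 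To repair your argument you would need to append this last term and compute it via~\eqref{eq:def1}, at which point you essentially recover the paper's computation.
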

\begin{proof}
The fact that $g_u\geq g_{u+1}$ is immediate by definition of $g_u$. 

Now assume that $g_u=g_{u+1}$ and show that $n_u\leq n_{u+1}-2$.
By Proposition~\ref{prop:welldeflambda_s1-1}~(1), we know that $\theta_{2u+1}^{(u+1)}=0$. Thus
\begin{align*}
\sum_{j=2u+2}^{n_{u+1}-2} &\left[ g_u-\left(\theta_j^{(u+1)}+\theta_{j-1}^{(u+1)}\right)\right] = (n_{u+1}-3-2u) g_u -\theta_{n_{u+1}-2}^{(u+1)}-2\sum_{j=2u+2}^{n_{u+1}-3}\theta_j^{(u+1)}\\
&=(n_{u+1}-4-2u) g_{u+1} +\theta_{n_{u+1}-1}^{(u+1)}-2\sum_{j=2u+4}^{n_{u+1}-1}\theta_j^{(u+2)} \qquad \text{(by~\eqref{eq:decalage1} and the first equality of~\eqref{eq:def1})}\\
&=\lambda_{u+1} \qquad \qquad \qquad \qquad \qquad \qquad \qquad \qquad \qquad \qquad \quad \text{(by~\eqref{eq:def1})}\\
&\geq \lambda_u \qquad \qquad \qquad \qquad \qquad \qquad \qquad \qquad \qquad \qquad \qquad \text{(because $g_u=g_{u+1}$ and by definition of $\mathcal{P}_r$).}
\end{align*}
Hence, by~\eqref{eq:defru}, we obtain that $n_u\leq n_{u+1}-2$.
\end{proof}

We are now ready to prove Proposition~\ref{prop:inverselambda}.

\begin{proof}[Proof of Proposition~\ref{prop:inverselambda}]
By Proposition~\ref{prop:welldefnu}, we already know that for all $u$, the set $\{\theta^{(u)}_j+\theta^{(u)}_{j+1}:j\geq 2u\}$ has a maximal element. Moreover, from the first equality of~\eqref{eq:def1}, $g_u = \theta^{(u)}_{n_u-2}+\theta^{(u)}_{n_u-1}$, and from Proposition~\ref{prop:welldeflambda_s1}~(3), for all $j\geq 2u$, $g_u\geq \theta^{(u)}_{j}+\theta^{(u)}_{j+1}$. Hence~\eqref{eq:g_u} is proved.

It only remains to show \eqref{eq:n_u}, i.e. that for all $u \in \{0,\ldots,s_{1}\}$, for all $2u\leq j\leq n_{u}-3$,  $g_{u}>\theta^{(u)}_{j}+\theta^{(u)}_{j+1}$. Let us do it by backward induction on $u$. For $u=s_1$, $n_{s_1}=2s_1+2$, so there is nothing to prove. Now assume the property holds for $u+1$ and show it for $u$.

If $n_u=2u+2$, there is nothing to prove. If $n_u \geq 2u+3$, we distinguish two cases:
\begin{itemize}
\item If $j=n_u-3$, from~\eqref{eq:def1} and the sentence before \eqref{eq:petit2}, we have $\theta_{n_u-1}^{(u)} > \theta_{n_u-1}^{(u+1)}$.
By~\eqref{eq:decalage1}, we deduce
$$\theta^{(u)}_{n_u-3}+\theta^{(u)}_{n_u-2} = \theta_{n_u-1}^{(u+1)} +\theta^{(u)}_{n_u-2} < \theta_{n_u-1}^{(u)} +\theta^{(u)}_{n_u-2} = g_u.$$ 

\item If $2u\leq j\leq n_u-4$, we distinguish two cases (we know from Lemma \ref{lem:memegu} that $g_u \geq g_{u+1}$).

\begin{itemize}
\item If $g_u>g_{u+1}$, then by~\eqref{eq:decalage1},
$$\theta^{(u)}_{j}+\theta^{(u)}_{j+1}=\theta^{(u+1)}_{j+2}+\theta^{(u+1)}_{j+3}\leq g_{u+1} < g_u,$$
where the first inequality follows from Proposition~\ref{prop:welldeflambda_s1}~(3).
\item If $g_u=g_{u+1}$, then again by~\eqref{eq:decalage1},
$$\theta^{(u)}_{j}+\theta^{(u)}_{j+1}=\theta^{(u+1)}_{j+2}+\theta^{(u+1)}_{j+3}< g_{u+1}= g_u,$$
where the inequality follows from the induction hypothesis, as $2u+2\leq j+2\leq n_u-2\leq n_{u+1}-4$ by Lemma ~\ref{lem:memegu}.
\end{itemize}
\end{itemize}
\end{proof}

\begin{Proposition}\label{prop:GrondL}
The map $ \Gamma\circ\Lambda$ is the identity map on $\mathcal{P}_r$. In other words, for all $(\mu(s_1,\ldots,s_{r-1}),\lambda)\in\mathcal{P}_r$, we have
$$\Gamma(\Lambda(\mu(s_1,\ldots,s_{r-1}),\lambda))=(\mu(s_1,\ldots,s_{r-1}),\lambda).$$
\end{Proposition}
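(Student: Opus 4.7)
The plan is to show by induction on $u$ that the sequences $\eta^{(u)}$ produced by $\Gamma$ applied to $\nu:=\Lambda(\mu(s_1,\ldots,s_{r-1}),\lambda)$ coincide with the sequences $\theta^{(u)}$ used in the definition of $\Lambda$, for all $u\in\{0,\ldots,s_1\}$. The base case $u=0$ is immediate: by construction of $\Lambda$, the partition $\nu$ has multiplicity sequence $\theta^{(0)}$, and by the very definition of $\Gamma$, $\eta^{(0)}$ is the multiplicity sequence of $\nu$, hence $\eta^{(0)}=\theta^{(0)}$.

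For the inductive step, assume $\eta^{(u)}=\theta^{(u)}$ for some $u<s_1$. Applying Proposition~\ref{prop:inverselambda} to $\theta^{(u)}$ gives
$$h_u=\max\{\eta^{(u)}_j+\eta^{(u)}_{j+1}:j\geq 2u\}=g_u\quad\text{and}\quad m_u=\min\{j\geq 2u+2:\eta^{(u)}_{j-2}+\eta^{(u)}_{j-1}=h_u\}=n_u.$$
Since $u<s_1$ forces $g_u\geq 1$ (by the convention $s_0=\infty$ and the definition of $g_u$), we have $h_u>0$, so the recursive process of $\Gamma$ has not stopped. A direct comparison of the three-region rules~\eqref{eq:fixed2}--\eqref{eq:def2} defining $\eta^{(u+1)}$ with the inverted form of the rules~\eqref{eq:fixed1}--\eqref{eq:def1} defining $\theta^{(u+1)}$, combined with Proposition~\ref{prop:welldeflambda_s1-1}~(1) which provides $(\theta^{(u+1)}_{2u},\theta^{(u+1)}_{2u+1})=(g_u,0)$, then shows that $\eta^{(u+1)}$ and $\theta^{(u+1)}$ agree coordinate-wise.

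It remains to recover the three components of $(\mu(s_1,\ldots,s_{r-1}),\lambda)$. First, the stopping index $U$ for $\Gamma$ satisfies $U=s_1$, since $h_u=g_u=0$ if and only if $u\geq s_1$; second, the identity $\eta^{(s_1)}=\theta^{(s_1)}$ asserts exactly that the first output of $\Gamma$ is the multiplicity sequence of $\mu(s_1,\ldots,s_{r-1})$, by Definition~\ref{def:mu}. Finally, to obtain $\kappa_u=\lambda_u$ for all $0\leq u\leq s_1-1$, the cleanest route is via the weight identities already established: Proposition~\ref{prop:welldefgammaweight}~(2) gives $\kappa_u=\sum_{j\geq 0}j\,\eta^{(u)}_j-\sum_{j\geq 0}j\,\eta^{(u+1)}_j$, whereas Proposition~\ref{prop:welldeflambda_s1-1}~(3) gives $\lambda_u=\sum_{j\geq 0}j\,\theta^{(u)}_j-\sum_{j\geq 0}j\,\theta^{(u+1)}_j$, and the two right-hand sides coincide by the induction.

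The main obstacle is the inductive step identifying $\eta^{(u+1)}$ with $\theta^{(u+1)}$. The rules defining $\Gamma$ are phrased in terms of intrinsic extrema of $\eta^{(u)}$ (namely $h_u$ and $m_u$), whereas those defining $\Lambda$ reference the external data $g_u$, $\lambda_u$ and the position $n_u$ chosen to satisfy an accumulation inequality. Bridging these two viewpoints is precisely what Proposition~\ref{prop:inverselambda} achieves, so the bulk of the delicate work has already been absorbed into that earlier statement. Once the bridge $(h_u,m_u)=(g_u,n_u)$ is in place, the rest of the verification is a careful but routine matching of the fixed, shifted, and modified regions in the transformation rules.
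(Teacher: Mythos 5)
Your proof is correct and follows essentially the same route as the paper's: induction on $u$ showing $\eta^{(u)}=\theta^{(u)}$, with Proposition~\ref{prop:inverselambda} supplying the key identification $(h_u,m_u)=(g_u,n_u)$ and the region-by-region matching of~\eqref{eq:fixed1}--\eqref{eq:def1} against~\eqref{eq:fixed2}--\eqref{eq:def2}. The only minor difference is that you recover $\kappa_u=\lambda_u$ from the weight identities of Propositions~\ref{prop:welldefgammaweight}~(2) and~\ref{prop:welldeflambda_s1-1}~(3), whereas the paper compares~\eqref{eq:def1} and~\eqref{eq:defkappa} directly; both are valid.
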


\begin{proof}
Let $(\mu(s_1,\ldots,s_{r-1}),\lambda)\in\mathcal{P}_r$. Using the notations of Sections~\ref{sec:Lambda} and~\ref{sec:Gamma} and applying first $\Lambda$ and then $\Gamma$, we first observe that $\eta^{(0)}=\theta^{(0)}$. Then by definition of $h_0$ and Proposition~\ref{prop:inverselambda}, we get $h_0=g_0$. 

\begin{itemize}
\item If $g_0=0$, then $s_1=0$, and the process $\Gamma$ stops at $U=0$. In that case, $\nu = \mu(0,\ldots,0)=\lambda=\emptyset$, and $\Gamma(\Lambda(\mu(0,\ldots,0),\emptyset))=(\mu(0,\ldots,0),\emptyset)$.
\item If $g_0>0$, then $s_1>0$. In that case, $m_0=n_0$ by~\eqref{eq:defsu} and Proposition~\ref{prop:inverselambda}. Therefore, $\eta^{(1)}_j=\theta^{(1)}_j$ for all $j\geq n_0$ by~\eqref{eq:fixed1} and~\eqref{eq:fixed2}, $\eta^{(1)}_j=\theta^{(1)}_j$ for all $2\leq j<n_0$ by ~\eqref{eq:decalage1} and~\eqref{eq:decalage2}, and
$\eta^{(1)}_0 = h_0 = g_0=\theta^{(1)}_0$ and $\eta^{(1)}_1=\theta^{(1)}_1=0$ by Proposition~\ref{prop:welldeflambda_s1-1}~(1) and~\eqref{eq:def2}, hence $\eta^{(1)} = \theta^{(1)}$. Moreover $\kappa_0=\lambda_0$ by~\eqref{eq:def1} and~\eqref{eq:defkappa}.
\end{itemize}

In the same way, we show that $\eta^{(u)}=\theta^{(u)}$ implies  $h_u=g_u$ by definition of $h_u$ and Proposition~\ref{prop:inverselambda}. If $g_u>0$, then $u<s_1$, $\eta^{(u+1)}=\theta^{(u+1)}$, and $\kappa_u=\lambda_u$. Otherwise, $g_u=0$, $u=s_1$ and $\Gamma$ stops at $U=s_1$. Therefore, 
$\Gamma(\Lambda(\mu(s_1,\ldots,s_{r-1}),\lambda))=(\mu(s_1,\ldots,s_{r-1}),\lambda)$.
\end{proof}

\subsection{$\Lambda\circ \Gamma$ is the identity on $\mathcal{A}_{r}$ \label{sec:LambdaoGamma}}
Finally we show that $ \Gamma\circ\Lambda$ is the identity map on $\mathcal{P}_r$. 
Let $\nu\in\mathcal{A}_r$, and apply $\Gamma$ to it, using the notations from Section~\ref{sec:Gamma}. Then apply $\Lambda$ with the notations from Section~\ref{sec:Lambda}. We will show that we recover $\nu$. To do this, we first state a preliminary result.

\begin{Proposition}\label{prop:inversegamma}
Let $u$ be an integer in $\{0,\ldots,U-1\}$. We have  
$$m_u = \min\left\{t\geq 2u+2: \sum_{j=2u+2}^t\left[h_u - \left(\eta^{(u+1)}_{j}+\eta^{(u+1)}_{j-1}\right)\right]\geq \kappa_u\right\}.$$
\end{Proposition}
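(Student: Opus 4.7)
The plan is to show the characterisation of $m_u$ by a direct computation, comparing the partial sums $S(t):=\sum_{j=2u+2}^{t}\bigl[h_u-(\eta_{j}^{(u+1)}+\eta_{j-1}^{(u+1)})\bigr]$ to $\kappa_u$ given in \eqref{eq:defkappa}. I would split into two statements: (a) $S(m_u)\geq \kappa_u$, and (b) $S(t)<\kappa_u$ for every $2u+2\leq t\leq m_u-1$.

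First I would rewrite $S(t)$ in terms of $\eta^{(u)}$ using the update rules \eqref{eq:fixed2}--\eqref{eq:def2}. The term at $j=2u+2$ equals $h_u-\eta^{(u)}_{2u}$ because $\eta^{(u+1)}_{2u+1}=0$ and, if $m_u\geq 2u+3$, $\eta^{(u+1)}_{2u+2}=\eta^{(u)}_{2u}$. For $2u+3\leq j\leq m_u-1$, both $\eta^{(u+1)}_{j}$ and $\eta^{(u+1)}_{j-1}$ are shifted, so the term becomes $h_u-\eta^{(u)}_{j-2}-\eta^{(u)}_{j-3}$. After the change of index $\ell=j-3$, one gets for $2u+2\leq t\leq m_u-1$
\begin{equation*}
S(t)=(h_u-\eta^{(u)}_{2u})+\sum_{\ell=2u}^{t-3}\bigl[h_u-(\eta^{(u)}_{\ell}+\eta^{(u)}_{\ell+1})\bigr],
\end{equation*}
and hence, by \eqref{eq:defkappa},
\begin{equation*}
\kappa_u-S(t)=\sum_{\ell=t-2}^{m_u-3}\bigl[h_u-(\eta^{(u)}_{\ell}+\eta^{(u)}_{\ell+1})\bigr].
\end{equation*}
The minimality of $m_u$ in \eqref{eq:defsu} forces $\eta^{(u)}_{\ell}+\eta^{(u)}_{\ell+1}<h_u$ for every $2u\leq \ell\leq m_u-3$ (otherwise some $\ell+2<m_u$ would lie in the set), so each summand is strictly positive. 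Since $t\leq m_u-1$ keeps the summation range non-empty, we conclude $S(t)<\kappa_u$, yielding (b).

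For (a), the extra term at $j=m_u$ is $h_u-\eta^{(u)}_{m_u}-\eta^{(u)}_{m_u-3}$ (the first factor fixed by $j\geq m_u$, the second shifted since $m_u-1\geq 2u+2$), and a direct comparison with the same expression for $\kappa_u$ gives
\begin{equation*}
S(m_u)-\kappa_u=\eta^{(u)}_{m_u-2}-\eta^{(u)}_{m_u}.
\end{equation*}
The definition of $h_u$ implies $\eta^{(u)}_{m_u-1}+\eta^{(u)}_{m_u}\leq h_u=\eta^{(u)}_{m_u-2}+\eta^{(u)}_{m_u-1}$, so $\eta^{(u)}_{m_u-2}\geq \eta^{(u)}_{m_u}$ and therefore $S(m_u)\geq \kappa_u$. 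The degenerate case $m_u=2u+2$ is immediate: $\kappa_u=h_u-\eta^{(u)}_{2u}$, and $S(2u+2)=h_u-\eta^{(u)}_{2u+2}\geq \kappa_u$ since $\eta^{(u)}_{2u+2}\leq h_u-\eta^{(u)}_{2u+1}=\eta^{(u)}_{2u}$ by maximality of $h_u$ and $\eta^{(u)}_{2u}+\eta^{(u)}_{2u+1}=h_u$.

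The main obstacle is purely bookkeeping: tracking the three possible origins of each $\eta^{(u+1)}_{j}$ (fixed, shifted, or the newly placed $(h_u,0)$ at $(2u,2u+1)$) in the partial sums, and getting the index shift $\ell=j-3$ right. Once this is done, the minimality in \eqref{eq:defsu} takes care of the strict inequality and the maximality in the definition of $h_u$ takes care of the non-strict inequality at $t=m_u$; together they give exactly the characterisation of $m_u$ that mirrors the definition \eqref{eq:defru} of $n_u$ in $\Lambda$, which is what will be needed in the proof of Proposition~\ref{prop:LrondG}.
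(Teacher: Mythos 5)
Your proof is correct; every identity I checked (the rewriting of the $j=2u+2$ term, the index shift $\ell=j-3$, the formula $\kappa_u-S(t)=\sum_{\ell=t-2}^{m_u-3}\bigl[h_u-(\eta^{(u)}_{\ell}+\eta^{(u)}_{\ell+1})\bigr]$ for $t\leq m_u-1$, and $S(m_u)-\kappa_u=\eta^{(u)}_{m_u-2}-\eta^{(u)}_{m_u}$) holds, and the degenerate case $m_u=2u+2$ is handled properly. The paper follows the same basic strategy of comparing the partial sums, rewritten in terms of $\eta^{(u)}$, against $\kappa_u$, but organizes the "all $t<m_u$ fail'' half differently: it observes that $\varphi(t)=S(t)-\kappa_u$ is non-decreasing in $t$ (using $\eta^{(u+1)}_{j}+\eta^{(u+1)}_{j-1}\leq h_{u+1}\leq h_u$, hence Proposition~\ref{prop:image}~(1)) and therefore only checks the two endpoint values $\varphi(m_u-1)<0$ (via the strict inequality~\eqref{strictmu}) and $\varphi(m_u)\geq 0$. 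Your version instead exhibits the deficit $\kappa_u-S(t)$ for \emph{every} $t\leq m_u-1$ as a non-empty sum of strictly positive terms, where strictness comes directly from the minimality in~\eqref{eq:defsu} combined with the maximality defining $h_u$; this makes the argument independent of Proposition~\ref{prop:image}~(1) and of~\eqref{strictmu}, at the cost of slightly heavier index bookkeeping. The two computations are equivalent at $t=m_u-1$ (your summand $h_u-\eta^{(u)}_{m_u-3}-\eta^{(u)}_{m_u-2}$ equals the paper's $\eta^{(u)}_{m_u-1}-\eta^{(u)}_{m_u-3}$ by~\eqref{eq:defsu}), so nothing is lost either way; yours is marginally more self-contained, the paper's reuses machinery it has already established.
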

\begin{proof}
Let $$\varphi\,:\,t\mapsto -\kappa_u+\sum_{j=2u+2}^t\left[h_u - \left(\eta^{(u+1)}_{j}+\eta^{(u+1)}_{j-1}\right)\right].$$
We want to show that $$m_u = \min\left\{t\geq 2u+2: \varphi(t)\geq0\right\}.$$

First, we treat the case $m_u=2u+2$. By the definitions of $h_u$ and $m_u$, we know that $\eta_{2u}^{(u)}+\eta_{2u+1}^{(u)}=h_u$ and $\eta_{2u+1}^{(u)}+\eta_{2u+2}^{(u)}\leq h_u$.
Thus
\begin{align*}
\varphi(2u+2) &= -\kappa_u + h_u - \left(\eta^{(u+1)}_{2u+1}+\eta^{(u+1)}_{2u+2}\right)\\
&=h_u - \left(\eta^{(u)}_{2u+1}+\eta^{(u)}_{2u+2}\right) &\text{(by~\eqref{eq:fixed2},~\eqref{eq:defkappa} and~\eqref{eq:def2})}\\
&\geq 0.
\end{align*}

Now turn to the case $m_u\geq 2u+3$.
Note that, for all $j\geq 2u+3$, 
$\eta^{(u+1)}_{j}+\eta^{(u+1)}_{j-1}\leq h_{u+1}\leq h_u$, where the first inequality follows from the definition of $h_{u+1}$ and the second from Proposition~\ref{prop:image}~(1). Thus the function  
$\varphi$
is non-decreasing on $\{2u+2,2u+3,\dots\}$. Hence, we only have to show that $\varphi(m_u-1)<0$ and $\varphi(m_u)\geq 0$. 

First, we have
\begin{align*}
\varphi(m_u-1)&=-\kappa_u+\sum_{j=2u+2}^{m_u-1}\left[h_u - \left(\eta^{(u+1)}_{j}+\eta^{(u+1)}_{j-1}\right)\right]\\
&= -\kappa_u+\eta^{(u)}_{m_u-2}+\eta^{(u)}_{m_u-3}-\eta^{(u)}_{2u}-\eta_{2u+1}^{(u+1)}+\sum_{j=2u}^{m_u-3}\left[h_u - \left(\eta^{(u)}_{j}+\eta^{(u)}_{j+1}\right)\right]&\text{(by ~\eqref{eq:decalage2})},
\end{align*}
which by~\eqref{eq:def2},~\eqref{eq:defkappa} and~\eqref{eq:defsu} yields
\begin{equation}\label{phi}
\varphi(m_u-1)= \eta^{(u)}_{m_u-3}-\eta^{(u)}_{m_u-1}.
\end{equation}
By~\eqref{strictmu}, this implies $\varphi(m_u-1)<0$.

Second, we have
\begin{align*}
\varphi(m_u) &= \varphi(m_u-1)+ h_u- \left(\eta^{(u+1)}_{m_u}+\eta^{(u+1)}_{m_u-1}\right)\\
&= h_u-\eta^{(u)}_{m_u-1} -\eta^{(u)}_{m_u} &\text{(by~\eqref{phi},~\eqref{eq:fixed2}, and~\eqref{eq:decalage2})}\\
&\geq 0 &\text{(by~\eqref{eq:defsu})}.
\end{align*}
The proposition is proved.
\end{proof}

\begin{Proposition}\label{prop:LrondG}
The map $\Lambda\circ \Gamma$ is the identity map on $\mathcal{A}_{r}$. In other words, for all $\nu\in\mathcal{A}_r$, we have
$$\Lambda(\Gamma(\nu))=\nu.$$
\end{Proposition}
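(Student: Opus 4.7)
The plan is to argue by downward induction on $u$. Let $\nu\in\mathcal{A}_r$ with multiplicity sequence $\eta^{(0)}$, apply $\Gamma$ to produce the sequences $\eta^{(0)},\eta^{(1)},\ldots,\eta^{(U)}$ together with the parameters $h_u,m_u,\kappa_u$, and set $\Gamma(\nu)=(\mu(s_1,\ldots,s_{r-1}),\kappa)$. Since $s_1=U$ by the definition of the $s_j$'s in Corollary~\ref{coro:imGamma}, applying $\Lambda$ to $(\mu(s_1,\ldots,s_{r-1}),\kappa)$ also produces $U+1$ sequences $\theta^{(U)},\theta^{(U-1)},\ldots,\theta^{(0)}$ with parameters $g_u,n_u$. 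I claim that $\theta^{(u)}=\eta^{(u)}$ for all $u\in\{0,\ldots,U\}$, which, in particular at $u=0$, yields $\Lambda(\Gamma(\nu))=\nu$.

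The base case $u=U$ is immediate: $\theta^{(s_1)}$ is by construction the multiplicity sequence of $\mu(s_1,\ldots,s_{r-1})$, and the same holds for $\eta^{(U)}$ by Corollary~\ref{coro:imGamma}. For the inductive step, assume $\theta^{(u+1)}=\eta^{(u+1)}$ and $\lambda_v=\kappa_v$ for $v\geq u+1$ (trivially true as they all come from the same sequence $\kappa$). First I would check that the auxiliary quantities match: by the equivalence~\eqref{equiv} in the proof of Corollary~\ref{coro:imGamma}, we have $g_u=h_u$, and then Proposition~\ref{prop:inversegamma} rewrites $m_u$ as exactly the minimum used in~\eqref{eq:defru} to define $n_u$, so since $\theta^{(u+1)}=\eta^{(u+1)}$ and $\lambda_u=\kappa_u$, we deduce $n_u=m_u$.

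With these identifications in hand, checking $\theta^{(u)}_j=\eta^{(u)}_j$ splits into four cases matching the rules~\eqref{eq:fixed1}--\eqref{eq:def1} of $\Lambda$ against~\eqref{eq:fixed2}--\eqref{eq:def2} of $\Gamma$. For $j<2u$ or $j\geq n_u=m_u$, both rules fix the value, so $\theta^{(u)}_j=\theta^{(u+1)}_j=\eta^{(u+1)}_j=\eta^{(u)}_j$. For $2u\leq j\leq n_u-3$, the shift-left rule~\eqref{eq:decalage1} gives $\theta^{(u)}_j=\theta^{(u+1)}_{j+2}=\eta^{(u+1)}_{j+2}$, and the shift-right rule~\eqref{eq:decalage2} (applied at index $j+2\in\{2u+2,\ldots,m_u-1\}$) gives $\eta^{(u+1)}_{j+2}=\eta^{(u)}_j$.

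The main obstacle is the last two positions $j\in\{n_u-2,n_u-1\}$, where one must verify that the value $\theta^{(u)}_{n_u-1}$ produced by~\eqref{eq:def1} in terms of $\theta^{(u+1)}$ and $\lambda_u$ coincides with $\eta^{(u)}_{n_u-1}$ (the "hidden" value before $\Gamma$ extracted it). Both pairs sum to $g_u=h_u$, so it suffices to match one coordinate. The plan is to substitute~\eqref{eq:decalage2} into the sum appearing in~\eqref{eq:def1}, reindex, and compare with the definition~\eqref{eq:defkappa} of $\kappa_u$; the two expressions differ by exactly $h_u-(\eta^{(u)}_{m_u-3}+\eta^{(u)}_{m_u-2})=\eta^{(u)}_{m_u-1}-\eta^{(u)}_{m_u-3}$, and since $\eta^{(u+1)}_{m_u-1}=\eta^{(u)}_{m_u-3}$ by~\eqref{eq:decalage2}, one obtains $\theta^{(u)}_{n_u-1}=\eta^{(u)}_{m_u-1}$, and correspondingly $\theta^{(u)}_{n_u-2}=h_u-\eta^{(u)}_{m_u-1}=\eta^{(u)}_{m_u-2}$. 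This closes the induction and finishes the proof.
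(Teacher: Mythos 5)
Your proposal is correct and follows essentially the same route as the paper's proof: a backward induction showing $\theta^{(u)}=\eta^{(u)}$, with $g_u=h_u$ obtained from the identification of $h_u$ on $\mu(s_1,\ldots,s_{r-1})$, $n_u=m_u$ from Proposition~\ref{prop:inversegamma} together with~\eqref{eq:defru}, and a position-by-position matching of the rules~\eqref{eq:fixed1}--\eqref{eq:def1} against~\eqref{eq:fixed2}--\eqref{eq:def2}. Your explicit reindexing computation for the pair $(n_u-2,n_u-1)$ fills in exactly the step the paper leaves to the reader.
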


\begin{proof}
Let $\nu\in\mathcal{A}_r$. Using the notations of Sections~\ref{sec:Lambda} and~\ref{sec:Gamma} and applying first $\Gamma$ and then $\Lambda$, we first observe that $\theta^{(s_1)}=\eta^{(s_1)}$, as by Corollary~\ref{coro:imGamma} we have $U=s_1$. Therefore by the definitions of $g_{s_1}$ and $U$, we obtain $h_{s_1}=0=g_{s_1}$. 

\begin{itemize}
\item If $s_1=0$, then $\nu=\kappa=\emptyset$ and $\Lambda(\Gamma(\emptyset)))=\Lambda(\mu(0,\ldots,0),\emptyset)=\emptyset$.

\item If $s_1>0$, we prove by backward induction on $0\leq u \leq s_1$ that   $\theta^{(u)}=\eta^{(u)}$.
Assume that for some $0\leq u<s_1$, $\theta^{(u+1)}=\eta^{(u+1)}$.
By~\eqref{etas1}, we have 
$$\left(\eta^{(u+1)}_{2u},\eta^{(u+1)}_{2u+1}\right)=(h_u,0)=\left(\eta^{(s_1)}_{2u},\eta^{(s_1)}_{2u+1}\right),$$
 and $g_u=h_u$ by~\eqref{equiv}.
  Hence, by Proposition~\ref{prop:inversegamma} and ~\eqref{eq:defru}, $n_u=m_u$. Therefore, $\theta^{(u)}_j=\eta^{(u)}_j$ for all $j\geq m_u$ by~\eqref{eq:fixed1} and~\eqref{eq:fixed2}, $\theta^{(u)}_j=\eta^{(u)}_j$ for all $2u\leq j<m_u-2$ by~\eqref{eq:decalage1} and~\eqref{eq:decalage2}, 
$\theta_{m_u-1}^{(u)} = \eta^{(u)}_{m_u-1}$
by~\eqref{eq:def1} and~\eqref{eq:defkappa}, and finally 
$\theta_{m_u-2}^{(u)} = \eta^{(u)}_{m_u-2}$ by~\eqref{eq:def1} and~\eqref{eq:def2}.
Hence, $\theta^{(u)}=\eta^{(u)}$.
\end{itemize}
Thus in particular $\theta^{(0)}=\eta^{(0)}$, i.e. $\Lambda(\Gamma(\nu))=\nu$.
\end{proof}

\section{Proof of Corollaries~\ref{coro:GF}--\ref{coro:list}}\label{sec:cor}

To prove our corollaries, we need to show that our maps $\Lambda$ and $\Gamma$ send the desired subsets of $\mathcal{P}_r$ from Definition~\ref{def:LHSsets} and the ones of $\mathcal{A}_r$ from Proposition~\ref{prop:TUfreq} and Definition~\ref{def:ABB} to the appropriate images.

\subsection{Maps induced by $\Lambda$}

We start with a preliminary result. 

\begin{Proposition}\label{prop:inducedlambda}
Let $0\leq i\leq r-1$. Let $(\mu(s_1,\ldots,s_{r-1}),\lambda)\in\mathcal{P}_r$, and apply $\Lambda$ to it, using the notations from Section~\ref{sec:Lambda}. Then the following holds.
\begin{enumerate} 
\item For all $u \in \{0,\ldots,s_1-1\}$, $\displaystyle (n_u-2)\theta^{(u)}_{n_u-2}+ (n_u-1)\theta^{(u)}_{n_u-1} \equiv \lambda_u \mod 2$.
\item If $\lambda \in \mathcal{P}_{i,r}(s_1,\ldots,s_{r-1})$, then for all $u \in \{0,\ldots,s_1\}$, $\theta^{(u)}_{2u}\leq i$. 
\item If $\lambda \in \mathcal{Q}_{i,r}(s_1,\ldots,s_{r-1})$,  then for all $u \in \{0,\ldots,s_1\}$,
$\theta^{(u)}_{2u}=0$ and $\theta^{(u)}_{2u+1}\leq i$.
\end{enumerate}
\end{Proposition}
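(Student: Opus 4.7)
My plan is to prove the three parts by tailored arguments sharing the common framework of the update rule $\theta^{(u+1)}\rightsquigarrow\theta^{(u)}$ given in~\eqref{eq:fixed1}--\eqref{eq:def1}, with constant use of the identity $(\theta^{(u+1)}_{2u},\theta^{(u+1)}_{2u+1})=(g_u,0)$ from Proposition~\ref{prop:welldeflambda_s1-1}~(1), which pins down the initial values the update modifies.

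For part~(1), I would compute directly. Using the first equality in~\eqref{eq:def1} one gets $(n_u-2)\theta^{(u)}_{n_u-2}+(n_u-1)\theta^{(u)}_{n_u-1}=(n_u-2)g_u+\theta^{(u)}_{n_u-1}$, and substituting the second equality in~\eqref{eq:def1} brings in the sum $\sum_{j=2u+2}^{n_u-1}(\theta^{(u+1)}_j+\theta^{(u+1)}_{j-1})$. Each interior term $\theta^{(u+1)}_k$ with $2u+2\leq k\leq n_u-2$ appears twice in this sum and drops out modulo~$2$, leaving only the boundary terms $\theta^{(u+1)}_{2u+1}=0$ and $\theta^{(u+1)}_{n_u-1}$. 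Two copies of $\theta^{(u+1)}_{n_u-1}$ then cancel across the substitution, and the residual $g_u$ coefficients combine to an even multiple of $g_u$, so that exactly $\lambda_u$ survives modulo~$2$.

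For parts~(2) and~(3) I would use backward induction on $u$ starting at $u=s_1$, with the base case immediate from Definition~\ref{def:mu}. Part~(2) splits into two induction step cases according to $n_u$. If $n_u>2u+2$, then $\theta^{(u)}_{2u}=\theta^{(u+1)}_{2u+2}$ by~\eqref{eq:decalage1} and the induction hypothesis at $u+1$ closes the argument. If $n_u=2u+2$, the shift region is empty and~\eqref{eq:def1} yields $\theta^{(u)}_{2u+1}=\lambda_u$ and $\theta^{(u)}_{2u}=g_u-\lambda_u$, so the bound $\theta^{(u)}_{2u}\leq i$ reduces to $\lambda_u\geq g_u-i$. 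This follows from $\lambda_u\geq \lambda_{s_{g_u+1}}$ (using that $s_{g_u+1}\leq u<s_{g_u}$ and the non-decreasing property on each block in Definition~\ref{def:P}) combined with the defining inequality of $\mathcal{P}_{i,r}(s_1,\ldots,s_{r-1})$ applied with $j=g_u$.

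For part~(3) the same skeleton requires controlling both $\theta^{(u)}_{2u}$ and $\theta^{(u)}_{2u+1}$, and three subcases arise. If $n_u\geq 2u+4$, both values are pure shifts and the induction hypothesis at $u+1$ suffices. If $n_u=2u+3$, then $\theta^{(u)}_{2u}=\theta^{(u+1)}_{2u+2}=0$ by induction, while an explicit evaluation of $\theta^{(u)}_{n_u-1}$ via~\eqref{eq:def1} gives $\theta^{(u)}_{2u+1}=2g_u-\lambda_u$, bounded by $i$ thanks to the condition $\lambda_u\geq g_u+\max\{g_u-i,0\}$ coming from $\mathcal{Q}_{i,r}$. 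The main obstacle is the case $n_u=2u+2$: as in part~(2), $\theta^{(u)}_{2u}=g_u-\lambda_u$ and $\theta^{(u)}_{2u+1}=\lambda_u$, and I must force $\lambda_u=g_u$ and $g_u\leq i$ simultaneously. The minimality defining $n_u$ in~\eqref{eq:defru}, combined with the induction hypothesis $\theta^{(u+1)}_{2u+2}=0$, yields $\lambda_u\leq g_u$, while the $\mathcal{Q}_{i,r}$ condition together with the block-monotonicity of $\lambda$ yields $\lambda_u\geq g_u$. The sandwich forces $\max\{g_u-i,0\}=0$, hence $g_u\leq i$, and closes the induction.
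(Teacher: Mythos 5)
Your proposal is correct and follows essentially the same route as the paper's proof: the same direct mod-$2$ expansion of \eqref{eq:def1} for part~(1) (where the interior terms double and only $\lambda_u+\theta^{(u+1)}_{2u+1}=\lambda_u$ survives), and the same backward inductions with the identical case splits on $n_u$ for parts~(2) and~(3), using block-monotonicity together with the defining inequalities of $\mathcal{P}_{i,r}$ and $\mathcal{Q}_{i,r}$ at $j=g_u$. The only cosmetic difference is that in the case $n_u=2u+2$ of part~(3) the bound $\lambda_u\leq g_u$ comes from the membership condition in \eqref{eq:defru} being satisfied at $t=2u+2$ rather than from "minimality" per se, but the sandwich argument is exactly the paper's.
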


\begin{proof}
\begin{enumerate}
\item Let $u \in \{0,\ldots,s_1-1\}$. By ~\eqref{eq:def1}, we have
\begin{align*}(n_u-2)\theta^{(u)}_{n_u-2}+ (n_u-1)\theta^{(u)}_{n_u-1}&= (n_u-2)g_u+ \theta^{(u)}_{n_u-1}\\
&= (n_u-2)g_u+\theta_{n_u-1}^{(u+1)}+\lambda_u-\sum_{j=2u+2}^{n_u-1} \left[g_u-\left(\theta_j^{(u+1)}+\theta_{j-1}^{(u+1)}\right)\right]\\
&= -2ug_u+\lambda_u+\theta_{2u+1}^{(u+1)}+2\sum_{j=2u+2}^{n_u-2}\theta_{j}^{(u+1)}\\
&\equiv \lambda_u+ \theta^{(u+1)}_{2u+1} \mod 2,
\end{align*}
and we conclude by using Proposition~\ref{prop:welldeflambda_s1-1}~(1).

\item Assume that $\lambda \in \mathcal{P}_{i,r}(s_1,\ldots,s_{r-1})$. We prove the result by backward induction on $u \in \{0,\ldots,s_1\}$. First, by~\eqref{eq:rappel}, $\theta^{(s_1)}_{2s_1}=0\leq i$.
Now assume that $\theta^{(u+1)}_{2u+2}\leq i$, and show that $\theta^{(u)}_{2u}\leq i$.
\begin{itemize}
\item If $n_u>2u+2$, then, by~\eqref{eq:decalage1}, $\theta^{(u)}_{2u}=\theta^{(u+1)}_{2u+2}\leq i$.
\item If $n_u=2u+2$, then, by~\eqref{eq:def1} and Proposition~\ref{prop:welldeflambda_s1-1}~(1), we get $\theta^{(u)}_{2u}=g_u-\theta^{(u+1)}_{2u+1}-\lambda_u=g_u-\lambda_u$. Recall that by definition of $g_u$, we have $s_{g_u+1}\leq u<s_{g_u}$. Thus by Definition \ref{def:LHSsets}, 
$\lambda_u\geq \lambda_{s_{g_u+1}}\geq g_u-i$. Therefore
$\theta^{(u)}_{2u}=g_u-\lambda_u \leq i$.
\end{itemize}

\item Similarly, assume that $\lambda \in \mathcal{Q}_{i,r}(s_1,\ldots,s_{r-1})$. We prove again the result by  backward induction on $u \in \{0,\ldots,s_1\}$. First, by~\eqref{eq:rappel}, $\theta^{(s_1)}_{2s_1}=\theta^{(s_1)}_{2s_1+1}=0\leq i$. Now assume that $\theta^{(u+1)}_{2u+2}=0$ and $\theta^{(u+1)}_{2u+3}\leq i$, and show that $\theta^{(u)}_{2u}=0$ and $\theta^{(u)}_{2u+1}\leq i$. We distinguish three cases.
\begin{itemize}
\item If $n_u>2u+3$, then by~\eqref{eq:decalage1}, $\theta^{(u)}_{2u}=\theta^{(u+1)}_{2u+2}=0$ and $\theta^{(u)}_{2u+1}=\theta^{(u+1)}_{2u+3}\leq i$.
\item If $n_u=2u+3$, then, by~\eqref{eq:decalage1}, $\theta^{(u)}_{2u}=\theta^{(u+1)}_{2u+2}=0$, and
\begin{align*}
\theta^{(u)}_{2u+1} &=2g_u-\lambda_u-2\theta_{2u+2}^{(u+1)}-\theta_{2u+1}^{(u+1)} &\text{by~\eqref{eq:def1}}
\\&= 2g_u-\lambda_u &\text{by~Proposition~\ref{prop:welldeflambda_s1-1}~(1).}
\end{align*}
Again, as $s_{g_u+1}\leq u<s_{g_u}$, we derive by Definition \ref{def:LHSsets} that
$\lambda_u\geq \lambda_{s_{g_u+1}}\geq g_u+\max\{g_u-i,0\}$. Therefore,
$$\theta^{(u)}_{2u+1}=2g_u-\lambda_u \leq g_u-\max\{g_u-i,0\} = \min\{g_u,i\}\leq i.$$
\item If $n_u=2u+2$, then by~\eqref{eq:def1}, $\theta^{(u)}_{2u}=g_u-\lambda_u$ and $\theta^{(u)}_{2u+1}=g_u-\theta^{(u)}_{2u}$. 
As above, $\lambda_u\geq g_u+ \max\{g_u-i,0\}$, thus by Proposition~\ref{prop:welldeflambda_s1-1}~(1),
$$g_u - \theta_{2u+2}^{(u+1)} -\theta_{2u+1}^{(u+1)} =g_u \leq \lambda_u - \max\{g_u-i,0\},$$
which by~\eqref{eq:defru} implies that $n_u=2u+2$ only if $g_u\leq i$ and $\lambda_u=g_u$. Therefore
$\theta^{(u)}_{2u}=g_u-\lambda_u=0$ and $\theta^{(u)}_{2u+1}=g_u\leq i$. 
\end{itemize}
\end{enumerate}
\end{proof}

We can now show that the images by $\Lambda$ of the subsets of $\mathcal{P}_r$ from Definition~\ref{def:LHSsets} are included in the desired subsets of $\mathcal{A}_r$ from Proposition~\ref{prop:TUfreq} and Definition~\ref{def:ABB}.
\begin{Corollary}\label{coro:inducelambda}
For all $r$ and $i$ integers such that $r\geq 2$ and $0\leq i \leq r-1$, we have 
\begin{enumerate}
\item $\Lambda(\mathcal{P}_{i,r})\subset\mathcal{A}_{i,r}$,
\item $\Lambda(\mathcal{Q}_{i,r})\subset\mathcal{T}_{i+1,r}$,
\item $\Lambda(\mathcal{R}_{i,r})\subset\mathcal{B}_{i,r}$,
\item $\Lambda(\tilde{\mathcal{R}}_{i,r})\subset\tilde{\mathcal{B}}_{i,r}$,
\item $\Lambda(\mathcal{S}_{i,r})\subset\mathcal{U}_{i+1,r}$,
\item $\Lambda(\tilde{\mathcal{S}}_{i,r})\subset\tilde{\mathcal{U}}_{i+1,r}$.
\end{enumerate}
\end{Corollary}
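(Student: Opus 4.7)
Proof plan.

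Parts (1) and (2) follow immediately by combining the already-proven Proposition~\ref{prop:inducedlambda} with Corollary~\ref{coro:imLambda}. Indeed, $\Lambda$ lands in $\mathcal{A}_r$ by Corollary~\ref{coro:imLambda}, so for Part (1) it suffices to show $\theta^{(0)}_0\leq i$, which is precisely Proposition~\ref{prop:inducedlambda}(2) at $u=0$. For Part (2), Proposition~\ref{prop:inducedlambda}(3) at $u=0$ gives $\theta^{(0)}_0=0$ and $\theta^{(0)}_1\leq i$; together with the $\mathcal{A}_r$ condition this places the image in $\mathcal{T}_{i+1,r}$ via the frequency reformulation \eqref{eq:cdt} of Proposition~\ref{prop:TUfreq}.

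For Parts (3)--(6), observe that $\mathcal{R}_{i,r},\tilde{\mathcal{R}}_{i,r}\subset\mathcal{P}_{i,r}$ and $\mathcal{S}_{i,r},\tilde{\mathcal{S}}_{i,r}\subset\mathcal{Q}_{i,r}$, so Parts (1)--(2) already put the image into $\mathcal{A}_{i,r}$ (respectively $\mathcal{T}_{i+1,r}$). The only remaining condition is a single parity constraint: for every pair $(f_u,f_{u+1})$ of the image with $f_u+f_{u+1}=r-1$, one must have $uf_u+(u+1)f_{u+1}\equiv p\pmod 2$, where $p\in\{r-1-i,\,r-i,\,i,\,i-1\}$ according to the four statements. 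In each case, the defining parity hypothesis on $\lambda$ says exactly that $\lambda_u\equiv p\pmod 2$ for every $u<s_{r-1}$, which is precisely the range of $u$ where $g_u=r-1$.

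I plan to establish all four parity statements uniformly by backward induction on $u$, from $u=s_1$ down to $u=0$, proving the invariant: \emph{for every $j\geq 2u$, if $\theta^{(u)}_{j}+\theta^{(u)}_{j+1}=r-1$ then $j\theta^{(u)}_{j}+(j+1)\theta^{(u)}_{j+1}\equiv p\pmod 2$}. The base case $u=s_1$ is vacuous since $\theta^{(s_1)}_{j}=0$ for $j\geq 2s_1$. In the inductive step I split into five subcases according to the position of $j$ relative to $n_u$, following~\eqref{eq:fixed1}--\eqref{eq:def1}: (i) $j\geq n_u$, values unchanged so the induction hypothesis applies directly; (ii) $2u\leq j\leq n_u-4$, the pair is shifted from $(\theta^{(u+1)}_{j+2},\theta^{(u+1)}_{j+3})$ and the induction hypothesis applies because subtracting $2(r-1)$ preserves parity; (iii) $j=n_u-2$, the newly-created pair of sum $g_u$, where $g_u=r-1$ forces $u<s_{r-1}$ and Proposition~\ref{prop:inducedlambda}(1) directly delivers $\equiv\lambda_u\equiv p\pmod 2$; and the two boundary subcases (iv) $j=n_u-3$ and (v) $j=n_u-1$.

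The hard part will be the boundary subcases (iv) and (v), where the pair mixes a shifted value with a newly-defined one, so neither the inductive hypothesis nor Proposition~\ref{prop:inducedlambda}(1) applies directly. Here the inequalities extracted in the proof of Proposition~\ref{prop:welldeflambda_s1}(3), namely \eqref{eq:petit2} for (iv) and \eqref{eq:petit1} for (v), forbid the pair sum from attaining $r-1$ unless equality holds: specifically $\theta^{(u+1)}_{n_u-1}=\theta^{(u)}_{n_u-1}$ in (iv), and $\theta^{(u)}_{n_u-2}=\theta^{(u+1)}_{n_u}$ in (v). Plugging these equalities into the weighted sum rewrites it modulo $2$ as $(n_u-2)\theta^{(u)}_{n_u-2}+(n_u-1)\theta^{(u)}_{n_u-1}$, which by Proposition~\ref{prop:inducedlambda}(1) is $\equiv\lambda_u\equiv p\pmod 2$. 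Applying the invariant at $u=0$ then yields the parity condition on $\nu=\Lambda(\mu,\lambda)$, completing Parts (3)--(6).
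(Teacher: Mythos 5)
Your proof is correct. Parts (1) and (2) are handled exactly as in the paper, via Proposition~\ref{prop:inducedlambda}~(2) and (3) at $u=0$. For parts (3)--(6) the core mechanism is also the same — Proposition~\ref{prop:inducedlambda}~(1) supplies the parity $\lambda_u$ at the newly created pair $(n_u-2,n_u-1)$, the condition $g_u=r-1$ forces $u<s_{r-1}$ where the hypotheses on $\lambda$ fix that parity, and the inequalities \eqref{eq:petit1}--\eqref{eq:petit2} control the two boundary positions — but your organization differs. The paper analyzes the final sequence $\theta^{(0)}$ directly: it uses Lemma~\ref{lem:memegu} to show $(n_u-2u)_{u=0}^{s_{r-1}-1}$ is non-decreasing, repeatedly applies \eqref{eq:fixed1} to push the frozen values down to step $0$, and thereby pins down exactly which indices $j$ can satisfy $f_j+f_{j+1}=r-1$ (namely $j=n_u-2$, and possibly $j=n_u-1$, for $u<s_{r-1}$), reading off the parity there. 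You instead propagate the parity invariant through every intermediate step $u$ by backward induction, splitting on the position of $j$ relative to $n_u$. Your route avoids Lemma~\ref{lem:memegu} and the interval decomposition entirely, at the cost of verifying the boundary subcases (iv) and (v) at each step; note that subcase (iv) (and in fact (ii)) is actually vacuous by \eqref{eq:n_u}, since $\theta^{(u)}_j+\theta^{(u)}_{j+1}<g_u\leq r-1$ for $2u\leq j\leq n_u-3$, though your equality-case analysis is harmless. Both arguments are complete; the paper's yields slightly more information (the exact set of indices attaining $r-1$), while yours is more self-contained at the level of a single step of $\Lambda$.
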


\begin{proof}
Let $(\mu(s_1,\ldots,s_{r-1}),\lambda) \in \mathcal{P}_r$, let $\nu=(f_j)_{j\geq0}$ denote its image by $\Lambda$, and use the notations from Section~\ref{sec:Lambda}. Recall that $f_j=\theta^{(0)}_j$ for all $j \geq 0$.

\begin{enumerate}
\item If $(\mu(s_1,\ldots,s_{r-1}),\lambda) \in \mathcal{P}_{i,r}$, 
we have $f_j+f_{j+1}\leq r-1$ for all $j$ because $\nu\in\mathcal{A}_{r}$. Moreover, using Proposition~\ref{prop:inducedlambda}~(2) with $u=0$, we obtain $f_0=\theta^{(0)}_0\leq i$, therefore $\nu\in\mathcal{A}_{i,r}$.

\item If $(\mu(s_1,\ldots,s_{r-1}),\lambda) \in \mathcal{Q}_{i,r}$, similarly we obtain  $f_0=0$ and $\theta^{(0)}_1\leq i$ by Proposition~\ref{prop:inducedlambda}~(3) with $u=0$. Therefore $\nu \in \mathcal{T}_{i+1,r}$.
\end{enumerate}

For the proof of (3)--(6), we distinguish two cases depending on the value of $s_{r-1}$.
\begin{itemize}
\item If $s_{r-1}=0$, then by definition $g_u\leq r-2$ for all $u\in\{0,\dots,s_1\}$. This implies by Proposition~\ref{prop:inverselambda} that $\theta^{(u)}_{j}+\theta^{(u)}_{j+1}<r-1$ for all $j\geq2u$. In particular for $u=0$, this gives $f_{j}+f_{j+1}<r-1$ for all $j\geq0$. Hence the additional conditions of the type ``$f_j+f_{j+1}=r-1$ only if \dots" in the sets $\mathcal{B}_{i,r},\ \tilde{\mathcal{B}}_{i,r},\ \mathcal{U}_{i+1,r},\ \tilde{\mathcal{U}}_{i+1,r}$ are void and there is nothing else to prove than $\Lambda(\mathcal{P}_{i,r})\subset \mathcal{A}_{i,r}$ and $\Lambda(\mathcal{Q}_{i,r})\subset \mathcal{T}_{i+1,r}$, which we just did.

\item If $s_{r-1}>0$, we need to examine for which integers $j$ we have $f_j+f_{j+1}=r-1$, or equivalently 
\begin{equation}\label{r-1}
\theta_{j}^{(0)}+\theta_{j+1}^{(0)}=r-1.
\end{equation}
First, by definition of $g_u$, we know that $g_u \leq r-2$ for $u \geq s_{r-1}$, and
$g_0 = g_1 = \cdots =g_{s_{r-1}-1}=r-1$. Thus by Lemma~\ref{lem:memegu}, $(n_u-2u)_{u=0}^{s_{r-1}-1}$ is a non-decreasing sequence of non-negative integers, and by \eqref{eq:g_u}, we know that $\theta^{(u)}_j+\theta^{(u)}_{j+1} \leq r-2$ for all $j \geq 2u$ unless $u \in \{0, \dots , s_{r-1}-1\}$. 
Therefore, by~\eqref{eq:fixed1}, for all $j\geq n_{s_{r-1}-1}\geq 2s_{r-1}$,
$$\theta_{j}^{(0)}+\theta_{j+1}^{(0)}= \cdots =\theta_{j}^{(s_{r-1}-1)}+\theta_{j+1}^{(s_{r-1}-1)}= \theta_{j}^{(s_{r-1})}+\theta_{j+1}^{(s_{r-1})} \leq r-2.$$

Hence~\eqref{r-1} may only be satisfied if $j< n_{s_{r-1}-1}$.

For all $u \in \{0, \dots , s_{r-1}-1\}$, by~\eqref{eq:fixed1}, we have $\theta_{j}^{(0)}=\theta_{j}^{(u)}$ for all $j \geq n_{u-1}$, with the convention that $n_{-1}=0$. 
Thus for  $n_{u-1}\leq j\leq n_{u}-3$, 
$$\theta_{j}^{(0)}+\theta_{j+1}^{(0)} = \theta_{j}^{(u)}+\theta_{j+1}^{(u)}<g_{u}=r-1,$$ 
where the inequality follows from~\eqref{eq:n_u}. Hence~\eqref{r-1} may only be satisfied if
$$j\in \{n_u-2,n_u-1: 0\leq u<s_{r-1}\}.$$

For $j= n_u-2\geq n_{u-1}$, we obtain
$$\theta_{n_u-2}^{(0)}+\theta_{n_u-1}^{(0)}=\theta_{n_u-2}^{(u)}+\theta_{n_u-1}^{(u)}=g_u=r-1,$$
where the second equality follows from~\eqref{eq:def1}. Hence~\eqref{r-1} is satisfied for all $j \in \{n_u-2: 0\leq u<s_{r-1}\}$.

Now if~\eqref{r-1} is satisfied for $j=n_u-1$, as we know that it is also satisfied for $j=n_u-2$, we derive $\theta_{n_u}^{(0)}=\theta_{n_u-2}^{(0)}=r-1-\theta_{n_u-1}^{(0)}$. Therefore,
$$(n_u-1)\cdot \theta_{n_u-1}^{(0)}+ n_u\cdot \theta_{n_u}^{(0)}\equiv (n_u-2)\cdot \theta_{n_u-2}^{(0)}+ (n_u-1)\cdot \theta_{n_u-1}^{(0)} \equiv\lambda_u \mod 2,$$
by Proposition~\ref{prop:inducedlambda}~(1). Thus, for all $j\geq0$,
\begin{equation}\label{r-1ccl}
f_{j}+f_{j+1}=r-1\Rightarrow \exists u\in\{0,\dots,s_{r-1}-1\},\;j\cdot f_{j} + (j+1)\cdot f_{j+1} \equiv \lambda_u \mod 2.
\end{equation}

If $(\mu(s_1,\dots,s_{r-1}),\lambda)$ belongs to $\mathcal{R}_{i,r}$ (resp. $\tilde{\mathcal{R}}_{i,r}$), we have $\nu\in\mathcal{A}_{i,r}$, because $\mathcal{R}_{i,r}$ and $\tilde{\mathcal{R}}_{i,r}$ are subsets of $\mathcal{P}_{i,r}$. Using~\eqref{r-1ccl}, we deduce that $\nu\in\mathcal{B}_{i,r}$ (resp. $\tilde{\mathcal{B}}_{i,r}$), and (3) and (4) are proved.

Similarly, if $(\mu(s_1,\dots,s_{r-1}),\lambda)$ belongs to $\mathcal{S}_{i,r}$ (resp. $\tilde{\mathcal{S}}_{i,r}$), we have $\nu\in\mathcal{T}_{i+1,r}$, because $\mathcal{S}_{i,r}$ and $\tilde{\mathcal{S}}_{i,r}$ are subsets of $\mathcal{Q}_{i,r}$. Using~\eqref{r-1ccl}, we deduce that $\nu\in\mathcal{U}_{i+1,r}$ (resp. $\tilde{\mathcal{U}}_{i+1,r}$), and (5) and (6) are proved.
\end{itemize}
\end{proof}

\subsection{Maps induced by $\Gamma$}

Again we start with a preliminary result. 

\begin{Proposition}\label{prop:inducedgamma}
Let $0\leq i\leq r-1$. Let $\nu\in\mathcal{A}_r$, and apply $\Gamma$ to it, using the notations from Section~\ref{sec:Gamma}. Then the following holds. 
\begin{enumerate}
\item For all $u \in \{0,\ldots,U-1\}$,
$(m_u-2)\cdot\eta^{(u)}_{m_u-2}+ (m_u-1)\cdot\eta^{(u)}_{m_u-1} \equiv \kappa_u\mod 2.$
\item If $\nu \in \mathcal{A}_{i,r}$, then for all $u \in \{0,\ldots,U\}$, $\eta^{(u)}_{2u}\leq i$.
\item If $\nu \in \mathcal{T}_{i+1,r}$, then for all $u \in \{0,\ldots,U\}$, $\eta^{(u)}_{2u}=0$ and $\eta^{(u)}_{2u+1}\leq i$.
\end{enumerate}
\end{Proposition}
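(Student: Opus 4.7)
The proof will be a direct analogue of Proposition~\ref{prop:inducedlambda}, with forward induction on $u$ replacing the backward induction used for $\Lambda$. The three parts are handled separately but by very similar ideas.

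For part (1), I would compute $\kappa_u$ modulo $2$ directly from \eqref{eq:defkappa}. Expanding the sum $\sum_{j=2u}^{m_u-3}(\eta_j^{(u)}+\eta_{j+1}^{(u)})$, every term with index strictly between $2u$ and $m_u-2$ occurs twice and thus vanishes mod $2$, leaving only the boundary contributions $\eta_{2u}^{(u)}$ and $\eta_{m_u-2}^{(u)}$. Combined with the $-\eta_{2u}^{(u)}$ term and the $(m_u-2u-1)h_u$ contribution, this gives
\[
\kappa_u \equiv (m_u-1) h_u - \eta_{m_u-2}^{(u)} \mod 2.
\]
Now substituting $h_u = \eta_{m_u-2}^{(u)}+\eta_{m_u-1}^{(u)}$ from \eqref{eq:defsu} produces exactly the claimed congruence.

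For parts (2) and (3), I would use forward induction on $u\in\{0,\dots,U\}$. The base case $u=0$ is immediate: Proposition~\ref{prop:TUfreq} and the fact that $\eta^{(0)}$ is the multiplicity sequence of $\nu$ give $\eta^{(0)}_0\le i$ if $\nu\in\mathcal{A}_{i,r}$, and $\eta^{(0)}_0=0$ with $\eta^{(0)}_1\le i$ if $\nu\in\mathcal{T}_{i+1,r}$. For the inductive step, I would distinguish cases according to the value of $m_u$. If $m_u$ is large enough (namely $m_u\ge 2u+3$ for (2), or $m_u\ge 2u+4$ for (3)), then \eqref{eq:decalage2} gives $\eta^{(u+1)}_{2u+2}=\eta^{(u)}_{2u}$ and $\eta^{(u+1)}_{2u+3}=\eta^{(u)}_{2u+1}$, and the conclusion follows from the induction hypothesis.

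The only delicate cases are the boundary ones. For part (2) when $m_u=2u+2$, using \eqref{eq:defsu} we get $h_u=\eta_{2u}^{(u)}+\eta_{2u+1}^{(u)}$, hence $\eta^{(u)}_{2u+2}\le h_u-\eta^{(u)}_{2u+1}=\eta^{(u)}_{2u}\le i$ by the induction hypothesis, and \eqref{eq:fixed2} gives $\eta^{(u+1)}_{2u+2}=\eta^{(u)}_{2u+2}\le i$. For part (3), the induction hypothesis yields $\eta^{(u)}_{2u}=0$, so when $m_u=2u+3$ we have $\eta^{(u+1)}_{2u+2}=\eta^{(u)}_{2u}=0$ by \eqref{eq:decalage2}, while $\eta^{(u+1)}_{2u+3}=\eta^{(u)}_{2u+3}\le h_u-\eta^{(u)}_{2u+2}\le \eta^{(u)}_{2u+1}\le i$ via the defining bound of $h_u$; when $m_u=2u+2$, then $h_u=\eta^{(u)}_{2u+1}$ forces $\eta^{(u)}_{2u+2}=0$ and $\eta^{(u)}_{2u+3}\le h_u\le i$, so \eqref{eq:fixed2} again suffices. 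The main obstacle will be to enumerate these boundary cases cleanly; apart from the bookkeeping, each subcase is a one-line deduction combining the maximality of $h_u$ with the induction hypothesis.
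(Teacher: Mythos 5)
Your proposal is correct and follows essentially the same route as the paper: part (1) is the same algebraic manipulation of \eqref{eq:defkappa} combined with $h_u=\eta^{(u)}_{m_u-2}+\eta^{(u)}_{m_u-1}$ (merely run from $\kappa_u$ toward the claimed expression rather than the reverse), and parts (2) and (3) use the identical forward induction with the same case split on $m_u$ and the same use of the maximality of $h_u$ together with \eqref{eq:defsu}, \eqref{eq:fixed2}, and \eqref{eq:decalage2}.
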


\begin{proof}
\begin{enumerate}
\item Let $u \in \{0,\ldots,U-1\}$. By~\eqref{eq:defsu} and~\eqref{eq:defkappa}, we have
\begin{align*}(m_u-2)\cdot\eta^{(u)}_{m_u-2}+ (m_u-1)\cdot\eta^{(u)}_{m_u-1}&= (m_u-1)h_u- \eta^{(u)}_{m_u-2}\\
&= (m_u-2)h_u- \eta^{(u)}_{m_u-2}+\kappa_u+\eta_{2u}^{(u)}- \sum_{j=2u}^{m_u-3}\left[h_u-\left(\eta_j^{(u)}+\eta_{j+1}^{(u)}\right)\right]\\
&= 2uh_u+\kappa_u+ 2\sum_{j=2u}^{m_u-3}\eta_j^{(u)}\\
&\equiv \kappa_u \mod 2.
\end{align*}
\item Assume that $\nu \in \mathcal{A}_{i,r}$, then its multiplicity sequence $(f_j)_{j\geq0}$ satisfies $f_0\leq i$. We prove the result by induction on $u\in \{0,\ldots,U\}$, and first observe that $\eta^{(0)}_{0}=f_0\leq i$. Now assume that $\eta^{(u)}_{2u}\leq i$ for $u<U$, and show that $\eta^{(u+1)}_{2u+2}\leq i$.
\begin{itemize}
\item If $m_u>2u+2$, then by~\eqref{eq:decalage2}, $\eta^{(u+1)}_{2u+2}=\eta^{(u)}_{2u}\leq i$.
\item If $m_u=2u+2$, then by~\eqref{eq:fixed2}, $\eta^{(u+1)}_{2u+2}=\eta^{(u)}_{2u+2}$. By definition of $h_u$ and~\eqref{eq:defsu},
$$\eta^{(u)}_{2u+2}+\eta^{(u)}_{2u+1}\leq h_u = \eta^{(u)}_{2u}+\eta^{(u)}_{2u+1},$$
so that $\eta^{(u+1)}_{2u+2}\leq \eta^{(u)}_{2u}\leq i$.
\end{itemize}
\item Similarly, assume that $\nu \in \mathcal{T}_{i+1,r}$ with multiplicity sequence $(f_j)_{j\geq0}$. We prove again the result by induction on $u\in \{0,\ldots,U\}$, starting with $\eta^{(0)}_{0}=f_0=0$ and $\eta^{(0)}_{1}=f_1\leq i$. Now assume that $\eta^{(u)}_{2u}=0$ and $\eta^{(u)}_{2u+1}\leq i$ for $u<U$, and show that  $\eta^{(u+1)}_{2u+2}=0$ and $\eta^{(u+1)}_{2u+3}\leq i$. We distinguish three cases.
\begin{itemize}
\item If $m_u>2u+3$, then by~\eqref{eq:decalage2}, $\eta^{(u+1)}_{2u+2}=\eta^{(u)}_{2u}=0$ and $\eta^{(u+1)}_{2u+3}=\eta^{(u)}_{2u+1}\leq i$.
\item If $m_u=2u+3$, then by~\eqref{eq:decalage2}, $\eta^{(u+1)}_{2u+2}=\eta^{(u)}_{2u}=0$. Moreover by ~\eqref{eq:fixed2}, $\eta^{(u+1)}_{2u+3}=\eta^{(u)}_{2u+3}$. By definition of $h_u$ and~\eqref{eq:defsu},
$$\eta^{(u)}_{2u+2}+\eta^{(u)}_{2u+3}\leq h_u = \eta^{(u)}_{2u+1}+\eta^{(u)}_{2u+2}.$$
Hence, $\eta^{(u+1)}_{2u+3}\leq \eta^{(u)}_{2u+1}\leq i$.
\item If $m_u=2u+2$, then by Remark~\ref{rem:eta_positive}, $\eta_{2u+2}^{(u)}\geq0$. By definition of $h_u$ and~\eqref{eq:defsu}, we also have 
$$\eta^{(u)}_{2u+2}+\eta^{(u)}_{2u+1}\leq h_u = \eta^{(u)}_{2u}+\eta^{(u)}_{2u+1},$$
so that $0\leq \eta_{2u+2}^{(u)} \leq \eta_{2u}^{(u)}=0$. Therefore $\eta_{2u+2}^{(u)}=0$, and then 
$$\eta_{2u+3}^{(u)}\leq h_u = \eta_{2u+1}^{(u)}\leq i.$$
Finally by~\eqref{eq:fixed2}, $\eta^{(u+1)}_{2u+2}=\eta^{(u)}_{2u+2}=0$ and $\eta^{(u+1)}_{2u+3}=\eta^{(u)}_{2u+3}\leq i$.
\end{itemize}
\end{enumerate}
\end{proof}

We can now show that the images by $\Gamma$ of the subsets of $\mathcal{A}_r$ defined in Proposition~\ref{prop:TUfreq} and Definition~\ref{def:ABB} are included in the desired subsets of $\mathcal{P}_r$ from Definition~\ref{def:LHSsets}.

\begin{Corollary}\label{coro:inducegamma}
For all $r$ and $i$ integers such that $r\geq 2$ and $0\leq i \leq r-1$, we have 
\begin{enumerate}
\item $\Gamma(\mathcal{A}_{i,r})\subset\mathcal{P}_{i,r}$,
\item $\Gamma(\mathcal{T}_{i+1,r})\subset\mathcal{Q}_{i,r}$,
\item $\Gamma(\mathcal{B}_{i,r})\subset\mathcal{R}_{i,r}$,
\item $\Gamma(\tilde{\mathcal{B}}_{i,r})\subset\tilde{\mathcal{R}}_{i,r}$,
\item $\Gamma(\mathcal{U}_{i+1,r})\subset\mathcal{S}_{i,r}$,
\item $\Gamma(\tilde{\mathcal{U}}_{i+1,r})\subset\tilde{\mathcal{S}}_{i,r}$.
\end{enumerate}
\end{Corollary}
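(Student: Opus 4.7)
The plan is to mirror the structure of the proof of Corollary~\ref{coro:inducelambda}, now using Proposition~\ref{prop:inducedgamma} in place of Proposition~\ref{prop:inducedlambda}. Let $\nu=(f_j)_{j\ge 0}\in\mathcal{A}_r$ and write $\Gamma(\nu)=(\mu(s_1,\ldots,s_{r-1}),\kappa)$, with the notation of Section~\ref{sec:Gamma}. Recall from Corollary~\ref{coro:imGamma} that $s_j=\min\{u\ge 0:h_u\le j-1\}$, so $h_u=j$ whenever $s_{j+1}\le u<s_j$. From~\eqref{eq:defkappa}, since every summand $h_u-\eta^{(u)}_{j'}-\eta^{(u)}_{j'+1}$ is non-negative by the definition of $h_u$, one has the crude lower bound $\kappa_u\ge h_u-\eta^{(u)}_{2u}$, which will drive the first two parts.

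For (1), let $1\le j\le r-1$ with $s_{j+1}<s_j$ (otherwise the condition in $\mathcal{P}_{i,r}(s_1,\ldots,s_{r-1})$ is vacuous) and set $u=s_{j+1}$, so $h_u=j$. Proposition~\ref{prop:inducedgamma}~(2) gives $\eta^{(u)}_{2u}\le i$, whence $\kappa_u\ge j-i$. For (2), assume $\nu\in\mathcal{T}_{i+1,r}$; Proposition~\ref{prop:inducedgamma}~(3) gives $\eta^{(u)}_{2u}=0$ and $\eta^{(u)}_{2u+1}\le i$, so $\kappa_u\ge j$. When additionally $j>i$, the inequality $\eta^{(u)}_{2u}+\eta^{(u)}_{2u+1}\le i<j=h_u$ together with the minimality in~\eqref{eq:defsu} forces $m_u\ge 2u+3$, so the sum in~\eqref{eq:defkappa} contains the $j'=2u$ term, producing $\kappa_u\ge j+(j-\eta^{(u)}_{2u+1})\ge 2j-i$. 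In both subcases $\kappa_u\ge j+\max\{j-i,0\}$, completing (2).

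For (3)--(6), the extra parity conditions hold vacuously if $s_{r-1}=0$, since then $h_0\le r-2$ forces $f_j+f_{j+1}\le r-2$ for every $j$. Otherwise, fix $u\in\{0,\ldots,s_{r-1}-1\}$, so $h_u=r-1$. Proposition~\ref{prop:image}~(3) applied along the chain $h_0=\cdots=h_{s_{r-1}-1}=r-1$ yields $m_{v+1}\ge m_v+2$ for every $0\le v\le s_{r-1}-2$, hence $m_u-2\ge m_{u-1}\ge\cdots\ge m_0$. Iterating~\eqref{eq:fixed2} across the steps $0,1,\ldots,u-1$ then shows $\eta^{(u)}_{m_u-2}=f_{m_u-2}$ and $\eta^{(u)}_{m_u-1}=f_{m_u-1}$, so that $f_{m_u-2}+f_{m_u-1}=r-1$, while Proposition~\ref{prop:inducedgamma}~(1) reads $(m_u-2)f_{m_u-2}+(m_u-1)f_{m_u-1}\equiv\kappa_u\pmod 2$.

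If $\nu\in\mathcal{B}_{i,r}$ (resp.\ $\tilde{\mathcal{B}}_{i,r}$, $\mathcal{U}_{i+1,r}$, $\tilde{\mathcal{U}}_{i+1,r}$), applying its defining parity condition at the position $j=m_u-2$ therefore yields $\kappa_u\equiv r-1-i$ (resp.\ $r-i$, $i$, $i+1$) $\pmod 2$ for every $u\in\{0,\ldots,s_{r-1}-1\}$; combined with (1) for (3)--(4), or (2) for (5)--(6), this places $\Gamma(\nu)$ into $\mathcal{R}_{i,r}$, $\tilde{\mathcal{R}}_{i,r}$, $\mathcal{S}_{i,r}$, and $\tilde{\mathcal{S}}_{i,r}$ respectively (using $i+1\equiv i-1\pmod 2$ for the last). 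The main obstacle is precisely this parity transfer: one must pair each iteration $u<s_{r-1}$ with a concrete position in $\nu$ at which two consecutive frequencies sum to $r-1$, and it is the strict monotonicity $m_{v+1}\ge m_v+2$ (extracted from Proposition~\ref{prop:image}~(3)) that guarantees that the two entries of $\eta^{(u)}$ at positions $m_u-2,m_u-1$ have never been touched by earlier steps of $\Gamma$, so that they genuinely record original frequencies of $\nu$ and Proposition~\ref{prop:inducedgamma}~(1) can be pulled back to a parity statement about $\nu$ itself.
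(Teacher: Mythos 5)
Your proof is correct and follows essentially the same route as the paper's: the lower bound $\kappa_u\geq h_u-\eta^{(u)}_{2u}$ combined with Proposition~\ref{prop:inducedgamma}~(2)--(3) for parts (1)--(2), and for parts (3)--(6) the chain $h_0=\cdots=h_{s_{r-1}-1}=r-1$, the monotonicity $m_{v+1}\geq m_v+2$ from Proposition~\ref{prop:image}~(3), the persistence of the entries at positions $m_u-2,m_u-1$ via~\eqref{eq:fixed2}, and the parity transfer via Proposition~\ref{prop:inducedgamma}~(1). The only deviations are organizational (splitting part (2) on $j>i$ versus $j\leq i$ rather than on $m_u=2u+2$ versus $m_u\geq 2u+3$, and explicitly restricting to indices $j$ with $s_{j+1}<s_j$, which is the reading of Definition~\ref{def:LHSsets} consistent with Proposition~\ref{prop:multisum} and implicitly used in the paper's own proof).
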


\begin{proof}
Let $\nu\in \mathcal{A}_r$, let $(\mu(s_1,\ldots,s_{r-1}),\kappa)$ denote its image by $\Gamma$, and use the notations from Section~\ref{sec:Gamma}. 

\begin{enumerate}
\item If $\nu \in \mathcal{A}_{i,r}$, first observe that for all $0\leq u \leq U-1$, by~\eqref{eq:defkappa} and by definition of $h_u$,
$$
\kappa_u =   h_u-\eta_{2u}^{(u)} +\sum_{j=2u}^{m_u-3}\left[h_u-\left(\eta_j^{(u)}+\eta_{j+1}^{(u)}\right)\right]\geq h_u-\eta_{2u}^{(u)}.
$$
As $\nu \in \mathcal{A}_{i,r}$, Proposition~\ref{prop:inducedgamma}~(2) then implies $\kappa_u\geq h_u-i$. By~\eqref{equiv}, for all $1\leq j\leq r-1$, $h_{s_{j+1}}=j$. Hence $\kappa_{s_{j+1}}\geq j-i$. Thus $\kappa \in \mathcal{P}_{i,r}(s_1,\ldots,s_{r-1})$, so that $\Gamma(\nu)\in \mathcal{P}_{i,r}$.
\item Suppose now that $\nu \in \mathcal{T}_{i+1,r}$ and let $u$ be such that $0\leq u\leq U-1$.
\begin{itemize}
\item If $m_u\geq2u+3$, then 
\begin{align*}
\kappa_u&\geq 2h_u-2\eta_{2u}^{(u)}-\eta_{2u+1}^{(u)}&\text{(by~\eqref{eq:defkappa} and by definition of $h_u$)}\\
&= h_u+ h_u-\eta_{2u+1}^{(u)} &\text{(by Proposition~\ref{prop:inducedgamma}~(3))}\\
&\geq h_u+ \max\{h_u-i,0\} &\text{(by Proposition~\ref{prop:inducedgamma}~(3))}.
\end{align*}
\item If $m_{u}=2u+2$, then by~\eqref{eq:defkappa}, $\kappa_u= h_u-\eta_{2u}^{(u)}$. Thus by Proposition~\ref{prop:inducedgamma}~(3), $\kappa_u= h_u$. As by~\eqref{eq:defsu} we have $h_u=\eta_{2u}^{(u)}+\eta_{2u+1}^{(u)}$, we derive using Proposition~\ref{prop:inducedgamma}~(3) that $h_u\leq i$, and therefore again $\kappa_u\geq h_u+\max\{h_u-i,0\}$.
\end{itemize}
Using as before $h_{s_{j+1}}=j$ for all $1\leq j\leq r-1$, the above discussion yields
$$\kappa_{s_{j+1}}\geq j+ \max\{j-i,0\},$$
 so that $\kappa \in \mathcal{Q}_{i,r}(s_1,\ldots,s_{r-1})$ and therefore $\Gamma(\nu)\in \mathcal{Q}_{i,r}$.
\end{enumerate}
 
For the proof of (3)--(6), we distinguish two cases depending on the value of $s_{r-1}$.
\begin{itemize}
\item If $s_{r-1}=0$, then $\{0,\dots,s_{r-1}-1\}$ is empty, so the  parity conditions involved  in the sets $\mathcal{R}_{i,r}$, $\tilde{\mathcal{R}}_{i,r}$, $\mathcal{S}_{i,r}$, and $\tilde{\mathcal{S}}_{i,r}$ are void, and there is nothing else to prove than $\Gamma(\mathcal{A}_{i,r})\subset \mathcal{P}_{i,r}$ and $\Gamma(\mathcal{T}_{i+1,r})\subset \mathcal{Q}_{i,r}$, which we just did.
\item If $s_{r-1}>0$, we need to examine the parity of $\kappa_{u}$ for the integers $u$ such that $0\leq u\leq s_{r-1}-1$. By~\eqref{equiv} with $j=r-1$,
$$h_0=h_1=\dots=h_{s_{r-1}-1}=r-1.$$
Therefore Proposition~\ref{prop:image}~(3) yields $m_u+2\leq m_{u+1}$ for all $0\leq u\leq s_{r-1}-1$. By repeatedly using~\eqref{eq:fixed2}, this implies in particular $\eta_{j}^{(u+1)}=\eta_{j}^{(0)}$ for all $j\geq m_u$ and all these integers $u$. As $m_u\geq m_{u-1}+2$, this yields $\eta^{(0)}_{m_u-2}=\eta^{(u)}_{m_u-2}$, $\eta^{(0)}_{m_u-1}=\eta^{(u)}_{m_u-1}$, and
$$\eta^{(0)}_{m_u-2}+\eta^{(0)}_{m_u-1}=\eta^{(u)}_{m_u-2}+\eta^{(u)}_{m_u-1}=h_u=r-1,$$
where the second equality follows from~\eqref{eq:defsu}. Therefore for $j=m_u-2$, we derive
\begin{align*}
j\cdot \eta^{(0)}_j+(j+1)\cdot\eta^{(0)}_{j+1}&= (m_u-2)\cdot\eta^{(0)}_{m_u-2}+ (m_u-1)\cdot\eta^{(0)}_{m_u-1} \\
&= (m_u-2)\cdot\eta^{(u)}_{m_u-2}+ (m_u-1)\cdot\eta^{(u)}_{m_u-1} \\
&\equiv\kappa_u \mod2 &\text{(by Proposition~\ref{prop:inducedgamma}~(1))}.
\end{align*} 
Finally, for $u\in\{0,\dots,s_{r-1}-1\}$,
\begin{equation}\label{r-1ccl'}
\kappa_u\equiv j\cdot \eta_{j}^{(0)} + (j+1)\cdot\eta_{j+1}^{(0)} \mod 2\quad\mbox{for some $j$ satisfying}\quad \eta_{j}^{(0)}+\eta_{j+1}^{(0)}=r-1.
\end{equation}

If $\nu$ belongs to $\mathcal{B}_{i,r}$ (resp. $\tilde{\mathcal{B}}_{i,r}$), then $\Gamma(\nu)\in\mathcal{P}_{i,r}$, because $\mathcal{B}_{i,r}$ and $\tilde{\mathcal{B}}_{i,r}$ are subsets of $\mathcal{A}_{i,r}$. Using~\eqref{r-1ccl'}, we derive $\Gamma(\nu)\in\mathcal{R}_{i,r}$ (resp. $\Gamma(\nu)\in\tilde{\mathcal{R}}_{i,r}$), which proves (3) and (4).

Similarly, if $\nu$ belongs to $\mathcal{U}_{i+1,r}$ (resp. $\tilde{\mathcal{U}}_{i+1,r}$), then $\Gamma(\nu)\in\mathcal{Q}_{i,r}$, because $\mathcal{U}_{i+1,r}$ and $\tilde{\mathcal{U}}_{i+1,r}$ are subsets of $\mathcal{T}_{i+1,r}$.
Using~\eqref{r-1ccl'}, we derive $\Gamma(\nu)\in\mathcal{S}_{i,r}$ (resp. $\Gamma(\nu)\in\tilde{\mathcal{S}}_{i,r}$), which proves (5) and (6).
\end{itemize}

\end{proof}

\subsection{Proof of Corollary~\ref{coro:GF}}

Using the bijection between $\mathcal{Q}_{i-1,r}$ and $\mathcal{T}_{i,r}$ induced in Corollaries~\ref{coro:inducelambda}~(2) and~\ref{coro:inducegamma}~(2) by our maps $\Lambda$ and $\Gamma$, we obtain for $1\leq i\leq r$:
$$
\sum_{n\geq 0} T_{i,r}(n)q^n=\sum_{(\mu,\lambda) \in \mathcal{Q}_{i-1,r}} q^{|(\mu,\lambda)|},$$
and this gives the desired~\eqref{eq:AGW} by~\eqref{AGriter}. Similarly,  using the bijection between $\mathcal{S}_{i-1,r}$ and $\mathcal{U}_{i,r}$ induced in Corollaries~\ref{coro:inducelambda}~(5) and~\ref{coro:inducegamma}~(5) by our maps $\Lambda$ and $\Gamma$, we get for $1\leq i\leq r$:
$$
\sum_{n\geq 0} U_{i,r}(n)q^n=\sum_{(\mu,\lambda) \in \mathcal{S}_{i-1,r}} q^{|(\mu,\lambda)|},
$$
and this gives the desired~\eqref{eq:BriW} by~\eqref{Briter}.

\subsection{Proof of Corollary~\ref{coro:newbressoud}}

Using the bijection between $\tilde{\mathcal{S}}_{i-1,r}$ and $\tilde{\mathcal{U}}_{i,r}$ induced in Corollaries~\ref{coro:inducelambda}~(6) and~\ref{coro:inducegamma}~(6) by our maps $\Lambda$ and $\Gamma$, we obtain for $1\leq i\leq r$:
$$
\sum_{n\geq 0} \tilde{U}_{i,r}(n)q^n=\sum_{(\mu,\lambda) \in \tilde{\mathcal{S}}_{i-1,r}} q^{|(\mu,\lambda)|},$$
and this gives~\eqref{eq:fijW} by~\eqref{Bri2ter}. We derive~\eqref{eq:fij} from~\eqref{eq:fijW} and~\eqref{Bri2bis}. The first part of Corollary~\ref{coro:newbressoud} is then immediate by extracting coefficients in the following identity, obtained from~\eqref{eq:fij} and~\eqref{eq:fijW}:
$$(1+q)\sum_{n\geq 0} \tilde{U}_{i,r}(n)q^n=\frac{1}{(q)_\infty}\left((q^{2r},q^{i+1},q^{2r-i-1};q^{2r})_\infty+q(q^{2r},q^{i-1},q^{2r-i+1};q^{2r})_\infty\right).
$$

\subsection{Proof of Corollary~\ref{coro:list}}

Formulas~\eqref{Br3.3form}--\eqref{fijform3} are derived by using the generating functions in~\eqref{Br3.3bis}--\eqref{BPbisbis} and~\eqref{Br3.3ter}--\eqref{fij4}, together with the bijections between $\mathcal{P}_{i,r}$ and $\mathcal{A}_{i,r}$, $\mathcal{R}_{i,r}$ and $\mathcal{B}_{i,r}$, and $\tilde{\mathcal{R}}_{i,r}$ and $\tilde{\mathcal{B}}_{i,r}$, induced in Corollary~\ref{coro:inducelambda}~(1),~(3),~(4) and Corollary~\ref{coro:inducegamma}~(1),~(3),~(4) by the maps $\Lambda$ and $\Gamma$.


\end{document}